\documentclass[11pt]{amsart}
\allowdisplaybreaks

\usepackage{amsmath,amssymb,amsthm,tikz, nicefrac, mathrsfs,upgreek, soul}
\usepackage{mathtools}
\usepackage{mathrsfs}
\usepackage{enumitem}
\usepackage[normalem]{ulem}



\RequirePackage{color}
\RequirePackage[colorlinks, urlcolor=my-blue,linkcolor=my-blue,citecolor=my-blue]{hyperref}
\definecolor{my-blue}{rgb}{0.0,0.0,0.6}
\definecolor{my-red}{rgb}{0.5,0.0,0.0}
\definecolor{my-green}{rgb}{0.0,0.5,0.0}
\definecolor{nicos-red}{rgb}{0.75,0.0,0.0}
\definecolor{nicos-green}{rgb}{0.0,0.75,0.0}
\definecolor{light-gray}{gray}{0.6}
\definecolor{really-light-gray}{gray}{0.8}
\definecolor{sussexg}{rgb}{0.0,0.5,0.5}
\definecolor{sussexp}{rgb}{0.5,0.0,0.5}

\usepackage{dsfont} 
\usepackage[symbol]{footmisc}
\usepackage[utf8]{inputenc}

\usepackage{nicefrac}

\usepackage{multicol}
\PassOptionsToPackage{dvipsnames}{xcolor}
\usepackage{tikz,xcolor}

\usetikzlibrary{shapes,arrows}
\usetikzlibrary{hobby}
\usetikzlibrary{decorations.markings}

\addtolength{\hoffset}{-1in}
\addtolength{\textwidth}{2.0in}

\newtheorem{theorem}{\color{my-blue}{\sc Theorem}}[section]
\newtheorem{lemma}[theorem]{\color{my-blue} \sc Lemma}
\newtheorem{claim}{\color{my-blue}{\sc Claim}}
\newtheorem{proposition}[theorem]{\color{my-blue} \sc Proposition}

\newtheorem{question}[theorem]{\color{my-blue} \sc Question}

\newtheorem{definition}[theorem]{\color{my-blue} \it Definition}
\numberwithin{equation}{section}
\theoremstyle{remark}
\newtheorem{remark}[theorem]{\color{my-blue} Remark}

\newcommand{\be}{\begin{equation}}
\newcommand{\ee}{\end{equation}}
\newcommand{\nn}{\nonumber}

\newcommand{\eqpd}{\, .}
\newcommand{\eqcom}{\, ,}

\newcommand{\rbrac}[1]{\left(#1\right)} 
\newcommand{\sbrac}[1]{\left[ #1\right]} 
\newcommand{\cbrac}[1]{\left\{ #1\right\}} 


\def\bE{\mathbb{E}}

\def\bP{\mathbb{P}}

\def\bZ{\mathbb{Z}}

\def\cH{\mathcal{H}}

\def\e{\varepsilon}

\def\cE{\mathcal{E}}

 \def\Z{\bZ}

\def\P{\bP}

\usepackage{stackengine}

\def\E{\bE}
\def\P{\bP} 

\definecolor{partcolor1}{rgb}{0.0,0.5,0.0}
\definecolor{partcolor2}{rgb}{0.0,0.5,0.0}

\definecolor{darkgreen}{rgb}{0.0,0.5,0.0}
\definecolor{darkblue}{rgb}{0.5,0.1,0.5}
\definecolor{deepblue}{rgb}{0.25,0.41,0.88}
\definecolor{nicosred}{rgb}{0.65,0.1,0.1}
\definecolor{light-gray}{gray}{0.7}
\allowdisplaybreaks[1]

\definecolor{navy}{rgb}{0.0,0.0,0.6}
\definecolor{purple(x11)}{rgb}{0.63, 0.36, 0.94}

\newcommand{\F}{\mathcal{F}}

\newcommand{\K}{\mathcal{K}}

\newcommand{\hnpw}{\mathcal{H}_{n,p,w}}

\newcommand{\hnw}{\mathscr{H}_{n, w}}

\newcommand{\Zs}{%
\text{\ooalign{\hidewidth\raisebox{0.2ex}{--}\hidewidth\cr$Z$\cr}}%
}

\newcommand{\mc}[1]{\mathcal{#1}}

\RequirePackage{datetime} 
\allowdisplaybreaks
\begin{document}
\usdate
\title[The Sunflower-Free Process]
{The Sunflower-Free Process}

\author{Patrick Bennett}
\address{Patrick Bennett, Western Michigan University, United States}
\email{patrick.bennett@wmich.edu} 
\thanks{The first author's research is supported in part by Simons Foundation Grant \#848648.}

\author{Amanda Priestley}
\address{Amanda Priestley, The University of Texas at Austin, United States}
\email{amandapriestley@utexas.edu}
\date{\today}
\begin{abstract}

An $r$\textit{-sunflower} is a collection of $r$ sets such that the intersection of any two sets in the collection is identical.  We analyze a random process which constructs a $w$-uniform $r$-sunflower free family starting with an empty family and at each step adding a set chosen uniformly at random from all choices that could be added without creating an $r$-sunflower with the previously chosen sets. 
To analyze this process, we extend results of the first author and Bohman \cite{BennettBohmanNoteOnRandom}, who analyzed a general random process which adds one object at a time chosen uniformly at random from all objects that can be added without creating certain forbidden subsets. 
\end{abstract}
\maketitle
\vspace*{-0.6cm}
\section{Introduction} \label{sec:Introduction}
An $r$\textit{-sunflower}, or a \textit{sunflower with $r$ petals}, is a collection of $r$ sets such that the intersection of any two sets in the collection is identical. When $r$ is clear we suppress it and just say \textit{sunflower}. We will refer to this common intersection as the \textit{kernel} of the sunflower. In particular, any collection of $r$ disjoint sets (resp. $r$ identical sets) forms an $r$-sunflower with an empty kernel (resp. a kernel equal to the identical sets). Let $f(w,r)$ denote the least integer so that any $w$-uniform family of $f(w,r)$ sets contains a sunflower with $r$ petals.
Most will be familiar with the famous \textit{sunflower conjecture} of Erd\H{o}s and Rado \cite{erdos1960intersection} which hypothesizes that, $f(w,r)<C^w$ for some constant $C \coloneqq C(r)$, and they proved the bounds  
\[
(r-1)^w<f(w, r) \leq w!(r-1)^w+1.
\] 
Their lower bound follows from the following construction. Let 
$\{X_1, \dots X_w\}$ be a collection of $w$ pairwise disjoint sets, each of size $r-1$. The family $\mathcal{F}$ defined as 

\[
\F \coloneqq  X_1\times X_2 \times \dots \times X_w \eqpd
\]
This family appears to be the most common construction referenced when discussing known examples of large sunflower-free families \cite{AlweissLovettWuZhang, KostochkaSurvey}. While there are quite a few meticulous constructions of Abbott and coauthors for special cases of $w$ and $r$, (see, e.g.; \cite{abbott1969combinatorial, abbott1972intersection, abbott1974finite,abbott1992set}) until the present work, to our knowledge there has been no other analysis of a general algorithm for constructing a $w$-uniform $r$-sunflower-free family.\\

In this work, we consider the following random process which we will call the \textit{$r$-sunflower-free process}, which generates a $w$-uniform $r$-\textit{sunflower-free} hypergraph. Going forward, let $\mathcal{K}_{n,w}$ denote the complete $w$-uniform hypergraph containing all possible $\binom{n}{w}$ sets. Let $\mathcal{L}_0\coloneqq \mathcal{K}_{n,w}$, and in addition, let $\mathcal{H}_0\coloneqq \emptyset$. In step $i$ of the process for $i\geq 1$,  an edge $e$ is chosen from $\mathcal{L}_{i-1}$ each with uniform probability, and added to $\mathcal{H}_{i-1}$ to form $\mathcal{H}_i$ so long as adding $e$ to $\mathcal{H}_{i-1}$ does not construct an $r$ sunflower in $\mathcal{H}_i$. Whether or not the edge is included, $e$ is deleted from $\mathcal{L}_{i-1}$ such that $\mathcal{L}_i\coloneqq \mathcal{L}_{i-1} \setminus e$. The algorithm continues until the final step $t$ such that $\mathcal{L}_t = \emptyset$. An equivalent version of this algorithm just chooses $e$ uniformly at random from all edges that would be accepted. This algorithm was introduced in an old paper of Abbott and Exoo \cite{abbott1992set}, and a
modified version (including an additional backtracking step) was used to construct the largest known example of a $3$-uniform $4$-sunflower-free family. The algorithm was used only as a method of constructing small concrete examples by way of computer, and has yet to be analyzed in its own right.\\

Based on the known constructions for good lower bounds on $f(w, r)$, it seems likely that very large sunflower-free collections must be ``structured,'' in the sense that these constructions tend to exploit specific properties of set systems. In this paper we will not come close to improving the best known lower bound, but rather we will address the following question:
\begin{question}
     What does a ``typical, non-structured'' sunflower-free collection look like? How large can it be?
\end{question} 
\noindent One simple way to interpret this question is to generate a collection of $w$-sets contained in a universe of $n$ points, taking each $w$-set independently with probability $p$, where $p$ is small enough that our random collection is unlikely to contain an $r$-sunflower. In particular, 
following the notation of \cite{frieze2016introduction}, let $\mathbb{H}_{n,p,w}$ denote a random hypergraph where each of the possible $\binom{n}{w}$ edges is chosen independently with probability $p$. Then we have the following.
\begin{proposition}\label{prop:Hnpw}  
    Fix $r \in \mathbb{N}$ and let  $w = n^\alpha$ for some fixed $\alpha \in \left(\frac{1}{2}, 1\right)$. In addition, let 
    \[
    N:= \binom nw, \qquad D \coloneqq \sum\limits_{s=0}^{w-1} \frac{w!(n-w)!}{(r-1)! s![(w-s)!]^{r} [(n-rw+(r-1)s]!},
    \] where $N$ is the number of edges, and $D$ denotes the number of sunflowers containing a fixed edge. Then,
    \[
    \Pr\left(\ \mathbb{H}_{n,p,w} \mbox{ contains an } r\mbox{ sunflower }\right)\to \begin{cases}
        0 &\mbox{ if } p=o\left({ND}\right)^{-\frac{1}{r}}\\
        1&\mbox{ if } p= \omega\left({ND}\right)^{-\frac{1}{r}}\
    \end{cases}
    \]
\end{proposition}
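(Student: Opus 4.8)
The plan is a first‑and‑second‑moment argument. Write $M:=ND/r$; since every $r$-sunflower of $\mathcal K_{n,w}$ has exactly $r$ edges and $D$ is the number of $r$-sunflowers through a fixed edge, a double count shows $M$ is exactly the number of $r$-sunflowers in $\mathcal K_{n,w}$. Let $X$ be the number of $r$-sunflowers present in $\mathbb H_{n,p,w}$, so that $\mathbb H_{n,p,w}$ contains an $r$-sunflower iff $X\ge 1$, and by linearity $\mathbb E[X]=Mp^{r}=\tfrac1r(ND)\,p^{r}$. If $p=o\big((ND)^{-1/r}\big)$ then $p^{r}=o\big((ND)^{-1}\big)$, so $\mathbb E[X]=o(1)$ and Markov's inequality gives $\Pr(X\ge 1)\le\mathbb E[X]\to 0$; this is the $0$ statement.

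For the $1$ statement I would first use monotonicity: the event ``$\mathbb H_{n,p,w}$ contains an $r$-sunflower'' is increasing, so $p\mapsto\Pr(X\ge1)$ is nondecreasing, and it therefore suffices to treat $p=\psi\,(ND)^{-1/r}$ with $\psi=\psi(n)\to\infty$ and $\psi\le\log n$ (given $p=\omega((ND)^{-1/r})$, replace it by $\min\{p,(\log n)(ND)^{-1/r}\}$). For such $p$ one has the exact identity $\mathbb E[X]=Mp^{r}=\psi^{r}/r\to\infty$. I then bound the second moment by splitting the double sum over ordered pairs $(S,S')$ of $r$-sunflowers of $\mathcal K_{n,w}$ according to $j=|S\cap S'|$,
\[
\mathbb E[X^{2}]=\sum_{j=0}^{r}p^{2r-j}C_{j},\qquad C_{j}:=\#\{(S,S'):|S\cap S'|=j\},
\]
and show $\sum_{j=1}^{r}p^{2r-j}C_{j}=o\big(\mathbb E[X]^{2}\big)$ (the term $j=0$ is at most $M^{2}p^{2r}=\mathbb E[X]^{2}$); then $\Pr(X=0)\le\mathbb E[X^{2}]/\mathbb E[X]^{2}-1\to 0$. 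The term $j=r$ equals $Mp^{r}=\mathbb E[X]=o(\mathbb E[X]^{2})$. For $j=1$, choosing the shared edge and a sunflower through it on each side gives $C_{1}\le ND^{2}$, so $p^{2r-1}C_{1}\le\psi^{2r-1}(D/N^{r-1})^{1/r}\le\psi^{2r-1}=o(\psi^{2r})$, using the trivial bound $D\le N^{r-1}$.

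The substantive range is $2\le j\le r-1$. If $|S\cap S'|=j\ge 2$ then $S,S'$ share their kernel $K$ (the common intersection of any two shared edges); say $|K|=s$. Any such pair is, via $j$ of its shared edges, a pair of completions of a common $j$-petal ``partial sunflower'' of kernel size $s$ to a full $r$-sunflower. Letting $P_{s,j}$ be the number of $j$-petal partial sunflowers of kernel size $s$, $E_{s,j}$ the number of completions of a fixed such object to an $r$-sunflower, and $M_{s}$ the number of $r$-sunflowers of kernel size $s$, the identity $P_{s,j}E_{s,j}=\binom rj M_{s}$ yields
\[
C_{j}\le\sum_{s=0}^{w-1}P_{s,j}E_{s,j}^{2}=\binom rj\sum_{s}M_{s}E_{s,j},\qquad p^{2r-j}C_{j}\le\binom rj\Big(\max_{s}E_{s,j}\,p^{\,r-j}\Big)\mathbb E[X].
\]
So it suffices to show $\max_{s}E_{s,j}\,p^{\,r-j}\to0$ for each $j\in\{2,\dots,r-1\}$. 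Here $E_{s,j}$ is a product of $r-j$ binomial coefficients $\binom{n-s-i(w-s)}{w-s}$ that is decreasing in $s$, so $\max_{s}E_{s,j}=E_{0,j}\le\frac{1}{(r-j)!}N^{r-j}e^{-(\binom r2-\binom j2)w^{2}/n}$ (from $\binom{n-iw}{w}\le Ne^{-iw^{2}/n}$); combining this with the matching lower bound $D\ge D_{0}\ge\frac{1}{(r-1)!}N^{r-1}e^{-(\binom r2+o(1))w^{2}/n}$, which gives $Np=\psi(N^{r-1}/D)^{1/r}\le O(1)\,\psi\,e^{((r-1)/2+o(1))w^{2}/n}$, a short computation of the exponents produces
\[
E_{0,j}\,p^{\,r-j}=O\!\Big(\psi^{\,r-j}\,e^{-(j(r-j)/2+o(1))\,w^{2}/n}\Big).
\]
Since $\alpha>\tfrac12$ forces $w^{2}/n=n^{2\alpha-1}\to\infty$, this decays faster than any power of $n$; as $j(r-j)/2\ge 1$ and $\psi\le\log n$ it tends to $0$, and summing over the finitely many values of $j$ completes the estimate.

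I expect the main obstacle to be exactly the chain of Stirling‑type estimates in the last paragraph: checking that $E_{s,j}$ is decreasing in $s$, and pinning down $D$, $E_{0,j}$, and their ratio sharply enough that the surviving exponent is genuinely negative (the bookkeeping gives the coefficient $-j(r-j)/2<0$ on $2\le j\le r-1$). A secondary point not to overlook is that ``$p=\omega(\cdot)$'' is not quantitative, so the monotonicity reduction capping $\psi$ at $\log n$ is essential — for $p$ only a polynomial factor below $1$ the crude overlap bounds do not beat the super‑polynomially large factors $N^{r-j}$, although there the statement is anyway immediate since $\mathbb H_{n,p,w}$ is nearly all of $\mathcal K_{n,w}$. (The degenerate case $r=2$ is covered, since the range $2\le j\le r-1$ is then empty and the claim reduces to the threshold $p\asymp N^{-1}$ for $\mathbb H_{n,p,w}$ to have at least two edges.)
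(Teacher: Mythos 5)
Your proof is correct and follows the same overall strategy as the paper's: a first moment bound for the $0$-statement and a second moment bound for the $1$-statement, with the second moment decomposed according to the overlap size $j=|S\cap S'|$. The difference lies in how the medium-overlap range $2\le j\le r-1$ is handled. The paper imports the spreadness estimate $\Delta_j(\hnw)\le \phi D^{\frac{r-j}{r-1}}$ with $\phi=e^{-w^2/(10n)}$ from Claim~\ref{clm:SF-Delta} (which the authors already need for Theorem~\ref{thm:SunflowerTheorem}), and then shows $\binom rj\phi D^{\frac{r-j}{r-1}}p^{r-j}=o(NDp^r)$; this inequality only gets easier as $p$ grows, so no truncation of $p$ is needed. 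You instead re-derive a completion bound $E_{0,j}\le\frac{1}{(r-j)!}N^{r-j}e^{-(\binom r2-\binom j2)w^2/n}$ from scratch and extract the clean surviving exponent $-j(r-j)w^2/(2n)$, which is the same underlying phenomenon in different bookkeeping; the identity $P_{s,j}E_{s,j}=\binom rj M_s$ is a nice reorganization that ties $C_j$ directly to $\mathbb E[X]$. One remark on efficiency: the monotonicity reduction capping $\psi$ at $\log n$ is not actually essential. What you need is $\max_s E_{s,j}\,p^{r-j}=o(\mathbb E[X])=o(NDp^r)$, which, like the paper's inequality, becomes monotonically easier as $p$ increases. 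You chose to establish the stronger statement $\max_s E_{s,j}\,p^{r-j}\to 0$, which genuinely requires the cap, but the weaker target would suffice and would let you drop that step entirely.
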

\noindent
Note that the case where $w=n^\alpha$ for $\alpha < 1/2$ is less interesting, since then two random edges probably do not intersect at all, and so a random set of $r$ edges is likely a sunflower (matching) already. Even for $\alpha = 1/2$ there is a constant probability that two random edges intersect, and so we probably have a matching on $r$ edges when the total number of edges is a large constant. \\

If one wants a larger random collection than the one given by the above proposition, one can use the alterations method. To apply this method we would again take a random collection of density $p$, but allow $p$ to be large enough so that our random collection probably does contain some (not too many) $r$-sunflowers. We then remove some sets from our collection to get rid of any $r$-sunflowers. In this work we will analyze a process that produces an even larger collection than the alterations method.\\

On the other hand, one might consider a hypergraph, $\mathbb{H}_{n,w,m}$ chosen uniformly from the set of all $w$-uniform hypergraphs on $n$ vertices, with $m$ edges. In this case, using the celebrated result of Frankston, Kahn, Narayanan, and Park \cite{FKNP}, one can say the following.
\begin{proposition}\label{prop:SpreadnessThreshold} 
Let $r = r(n) \to \infty$. If $m \geq Ke^{2w-t}r n^{t}\log r$, for some absolute constant $K$, then with high probability $\mathbb{H}_{n,m,w}$ contains a sunflower with $r$ petals and a kernel of size $t$. 
\end{proposition}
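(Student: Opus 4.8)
The plan is to deduce Proposition~\ref{prop:SpreadnessThreshold} from the Frankston--Kahn--Narayanan--Park spread theorem \cite{FKNP}. Take as ground set $X \coloneqq \binom{[n]}{w}$, the set of all possible $w$-edges, so $|X| = N$, and let $\mathcal{G}$ be the family of edge-sets of all $r$-sunflowers on $[n]$ whose kernel has size exactly $t$; each member of $\mathcal{G}$ consists of exactly $r$ edges, so the relevant bound on the size of a minimal witness is $\ell = r$. The event in the proposition is exactly the (monotone increasing) event that the edge-set of $\mathbb{H}_{n,m,w}$ contains a member of $\mathcal{G}$. Since $\mathbb{H}_{n,m,w}$ is $\mathbb{H}_{n,p,w}$ conditioned on having $m$ edges, a standard coupling — take $p = m/(2N)$, note $\Pr(\mathrm{Bin}(N,p) > m) = o(1)$, and use monotonicity — reduces the claim to showing that the $p$-random subset $X_p$ contains a member of $\mathcal{G}$ with high probability for $p$ of order $e^{2w-t}rn^{t}N^{-1}\log r$.

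So the task is to show that $\mathcal{G}$ is $q$-spread with $q \coloneqq e^{2w-t}rn^{t}/N$; the spread theorem then gives that $X_p$ meets $\mathcal{G}$ with high probability once $p \ge L q \log\ell = L q \log r$ for the universal constant $L$, and, multiplying through by $2N$, this is exactly the hypothesis $m \ge K e^{2w-t}rn^{t}\log r$ with $K = 2L$. To check $q$-spreadness, fix $S \subseteq X$ with $|S| = j$. If the $j$ edges of $S$ do not form a partial sunflower, then $\#\{H \in \mathcal{G} : S \subseteq H\} = 0$; otherwise, for $j \ge 2$ the kernel is forced and $S$ is extended by $r-j$ further petals whose non-kernel parts are pairwise-disjoint $(w-t)$-subsets of the $n - t - j(w-t)$ unused vertices, while for $j = 1$ there is an extra factor $\binom{w}{t}$ for the choice of kernel inside the single edge. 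Counting yields the exact identity
\[
\frac{\#\{H \in \mathcal{G} : S \subseteq H\}}{|\mathcal{G}|} = \frac{r(r-1)\cdots(r-j+1)}{\binom{n}{t}\,\prod_{k=0}^{j-1}\binom{n-t-k(w-t)}{w-t}}\qquad(2 \le j \le r),
\]
with the $j = 1$ case collapsing, via $\binom{n}{t}\binom{n-t}{w-t} = \binom{n}{w}\binom{w}{t}$, to exactly $r/N$.

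Using $r(r-1)\cdots(r-j+1) \le r^{j}$ and $\binom{n}{t}\ge 1$, the required inequality $\#\{H : S \subseteq H\} \le q^{j}|\mathcal{G}|$ breaks into the inequalities $\binom{n}{w} \le e^{2w-t}n^{t}\binom{n-t-k(w-t)}{w-t}$, one for each $k$ with $0 \le k \le j-1$, whose worst case over all $j$ and $k$ is $k = r-1$. This single inequality is the only real content, and it is where the main (mild) obstacle lies: one must control how much the available vertex pool shrinks as petals accumulate. Writing $M_0 \coloneqq n-t-r(w-t)+1 \ge 1$, an elementary estimate on ratios of binomials gives $\binom{n}{w}/\binom{n-t-(r-1)(w-t)}{w-t} \le n^{t}\,(n/M_0)^{w-t}$, and combining this with $e^{2w-t} \ge (e^{2})^{w-t}$, the inequality holds whenever $M_0 \ge n/e^{2}$, i.e.\ whenever $r(w-t) \le cn$ for a suitable absolute $c < 1$ — essentially the whole range in which sunflowers with kernel of size $t$ exist, and in particular every regime relevant here. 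Thus the factor $e^{2w-t}n^{t}$ is exactly the slack absorbing the pool-shrinkage term $(n/M_0)^{w-t}$ and the binomial ratio $n^{t}$, rather than anything sharp; indeed a smaller $q$ is available, but this one suffices. Putting the pieces together: $q$-spreadness $\Rightarrow$ the spread theorem $\Rightarrow$ $X_p$ meets $\mathcal{G}$ with high probability for $p \asymp e^{2w-t}rn^{t}N^{-1}\log r$ $\Rightarrow$ the same for $\mathbb{H}_{n,m,w}$ as soon as $m \ge K e^{2w-t}rn^{t}\log r$.
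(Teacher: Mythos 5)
Your proposal is correct in substance and follows exactly the same route as the paper: reduce to the binomial model, show that the family $\mathcal{G}$ of $r$-sunflowers with kernel size $t$ is suitably spread, and invoke the Frankston--Kahn--Narayanan--Park theorem. The difference is purely in how the spread is verified. You compute the ratio $\#\{H\in\mathcal{G}:S\subseteq H\}/|\mathcal{G}|$ via a clean combinatorial identity (the factor of $\binom{w}{t}$ in the $j=1$ case and its cancellation through $\binom{n}{t}\binom{n-t}{w-t}=\binom{n}{w}\binom{w}{t}$ is exactly right), reduce to a single binomial inequality for each $k$, and handle the worst case $k=r-1$ with the estimate $\binom{n}{w}/\binom{n-t-(r-1)(w-t)}{w-t}\le n^t(n/M_0)^{w-t}$. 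The paper instead writes the ratio as a single quotient of factorials and hits it with Stirling's two-sided bound. Your version is more transparent (the paper's Stirling line is terse and its displayed formula has a sign typo in the factorial argument).

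There is one place where your route is genuinely weaker than the paper's, and it's worth being precise about it. Bounding $\binom{n'}{w-t}\ge M_0^{w-t}/(w-t)!$ with $M_0=n-t-r(w-t)+1$ is lossy when $M_0$ is small, which is why you end up needing $M_0\ge n/e^2$, i.e.\ $r(w-t)\le(1-e^{-2})n-t+1$. That is not ``essentially the whole range in which sunflowers with kernel of size $t$ exist'': such sunflowers exist as long as $M_0\ge 1$, and the proposition as stated imposes no restriction that rules out $1\le M_0<n/e^2$. The paper's Stirling-based estimate does not suffer from this, because in
\[
\frac{(n-a)!}{n!}\le\sqrt{\tfrac{n-a}{n}}\,\Bigl(1-\tfrac an\Bigr)^{n-a}\cdot\frac{e^a}{n^a}\cdot e^{O(1/(n-a))}
\]
the factor $\bigl(1-\tfrac an\bigr)^{n-a}\le 1$ absorbs exactly the shrinkage you are fighting when $a=s(w-t)+t$ is close to $n$, so the bound $\frac{(n-a)!}{n!}\le e^a/n^a\cdot O(1)$ holds uniformly down to $n-a=1$. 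In short: your argument is correct and arguably cleaner, but it silently restricts to $r(w-t)\le cn$; to match the proposition in full generality you should either impose that restriction (it is harmless in the regimes the paper uses) or replace the crude binomial bound with a two-sided Stirling estimate on the full factorial ratio as the paper does.
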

Note that in the above proposition, we must let $r$ grow with the number of points in the universe, $n$. This is only point in the paper in which we consider this regime of $r$, and in what follows (and what came before) we think of $r\in \mathbb{N}$ as being fixed.
\\

For our main result, however, we will analyze the sunflower-free process in detail by adopting the perspective that a sunflower-free collection of sets can be viewed as an independent set in an appropriately defined hypergraph.
An \textit{independent set} in $\mathscr{H}$ is a subset of the vertices containing no edge of $\mathscr{H}$. For the the sunflower-free problem on a universe $U$ of $n$ points, the hypergraph of interest is denoted $\hnw$. $\hnw$ has vertex set $\mathscr{V}_{n,w} = \binom{U}{w}$, the collection of all 
sets of size $w$ in $U$. $\hnw$ has edge set  $\mathscr{E}_{w,r}$, which is the  collection of all sets of $r$-sunflowers (i.e. sets of $r$ many sets of size $w$ that form a sunflower).
Going forward, we will call $\hnw$ the \textit{sunflower hypergraph}. Thus, an independent set in $\hnw$ is just a $w$-uniform $r$-sunflower-free collection.\\

 The random greedy independent set process, or just the \textit{independent process}, for a hypergraph of  $\mathscr{H}$, is defined as follows. We use similar notation to that used in \cite{BennettBohmanNoteOnRandom} for ease of translation. We begin with $\mathscr{H}(0) \coloneqq \mathscr{H}$, and in the $(i+1)^{th}$ step, a vertex $v$ from the set of \textit{open vertices} $V(i)$, (where $V(0) \coloneqq V(\mathscr{H})$) is chosen to be added to the independent set $I(i)$ where $I(0)\coloneqq \emptyset$. After $v$ is added to $I(i)$ to form  $I(i+1)$, the following updates are performed to obtain $\mathscr{H}(i).$
\begin{enumerate}
    \item \label{step1} The \textit{closed vertices} are removed from $V(i)$ to form $V(i+1)$. That is, the vertex that was chosen to be added to the independent set in step $i+1$, $v_{i+1}$, is removed, along with every vertex $u\in V(i)$ such that $\{v,u\}$ is an edge of $\mathscr{H}(i)$.
    \item \label{step2} The edge set of $\mathscr{H}(i)$ is updated by removing the vertex $v_{i+1}$ from every edge containing $v_{i+1}$  and at least two other vertices, and every edge containing the an edge of the form $\{v_{i+1}, u\} \in \mathscr{H}(i)$  
    is also removed. 
    \item \label{step3} In general, by convention, for any edge $e'\in \mathscr{H}(i) $ such that there exists an edge $e\in \mathscr{H}(i) $ with $ e \subseteq e'$, $e'$ is also removed.
\end{enumerate}

The independent process has been analyzed for many specific hypergraphs and in general for ``nice'' hypergraphs. The most well-known example is the triangle-free process, first analyzed by Bohman \cite{bohmantriangle} and then in greater detail by Bohman and Keevash \cite{bohmankeevashtriangle} and independently by Fiz Pontiveros, Griffiths, and Morris \cite{pontiverosgriffithsmorris}. This process is a randomized graph algorithm which gave the best known lower bound on the Ramsey numbers $R(3, t)$ until it was very recently improved by Campos, Jenssen, Michelen and Sahasrabudhe \cite{CJMS} (though the new lower bound still uses a similar randomized construction together with new ideas). More generally, the $H$-free process was analyzed by Bohman and Keevash \cite{bohmankeevashhfree} and gives the best known lower bound for $R(s, t)$ for fixed $s\geq 5$ (until recently it was also the case for $s=4$, but the lower bound on $R(4, t)$ has now been improved by Mattheus and Verstraete \cite{mattheusverstraete} using completely different ideas). To encode the $H$-free process as an instance of the independent process, we define the following hypergraph $\mathscr{H}_H$. The $H$-free process is then just the independent process on $\mathscr{H}_H$.
\begin{definition}\label{def:hfree}
    Let $H$ be a fixed graph. Then $\mathscr{H}_H$ is the hypergraph with vertex set $E(K_n)$, where each hyperedge of $\mathscr{H}_H$ is the edge set of an isomorphic copy of $H$.
\end{definition}

In a note of the first author and Bohman \cite{BennettBohmanNoteOnRandom} a general lower bound was given on the size of an independent set, $I$,  given by the random greedy independent set process under certain assumptions about the codegrees in the hypergraph $\mathscr{H}$. Unfortunately there are some interesting hypergraphs, including $\hnw$, which do not satisfy the assumptions in \cite{BennettBohmanNoteOnRandom}. In this paper we modify the analysis to tolerate relaxed assumptions. In particular, this will allow us to analyze the sunflower-free process.
\\

The method we will use is the {\em differential equation method}, which is a collection of techniques and tools useful for analyzing random processes which select one object at each step. For these processes, the selection is generally random, but may depend on previous steps. Typically (as will be the case for us), we are forbidden from making certain selections based on the result of previous steps, but the distribution at any given step is uniform over all non-forbidden possibilities. The method is strongly associated with Wormald, who played a crucial role in developing it and finding many applications \cite{wormald}. The unfamiliar reader can also refer to a gentle introduction to the method by the first author and Dudek \cite{bennettdudekgentle}. The differential equation method is the approach used in the papers \cite{bohmantriangle, bohmankeevashtriangle, pontiverosgriffithsmorris, bohmankeevashhfree} mentioned previously. \\

Recall that $V(i)$ denotes the set of \textit{open} vertices in step $i$, with $|V(0)| =N$. We say that $\mathscr{H}$ is \textit{$D$-regular} if each vertex $v \in V(\mathscr{H})$ is in exactly $D$ edges. We let $d_A$ denote the \textit{degree of a subset} $A \subseteq N$ of the vertices, meaning the number of hyperedges in $\mathscr{H}$ containing $A$. Furthermore let $\Delta_{\ell}(\mathscr{H})$ denote the \textit{maximum degree of an $\ell$-subset of the vertices.} That is, 
\[
\Delta_{\ell}(\mathscr{H}) = \max\{d_A : A \subseteq N, |A|=\ell\} \eqpd
\]
We let the \textit{$(r-1)$-codegree} of a pair of distinct vertices $v, v^{\prime}$ be the number of subsets  $S \subseteq V$ of size $r-1$ such that $S\cap \{v, v'\} =\emptyset$, and $\{v\} \cup S$ and $\{v'\} \cup S$ are both edges of  $\mathscr{H}$.

Finally, we let $\Gamma(\mathscr{H})$ denote the maximum $(r-1)$-codegree of over all pairs of vertices $v, v'$.\\

As one of our main theorems is a modification of the theorem of the first author and Bohman \cite{BennettBohmanNoteOnRandom}, we restate it here for convenience. 
\begin{theorem}[Theorem 1.1 in \cite{BennettBohmanNoteOnRandom}]\label{thm:bennettBohman} Let $r\geq 3$ and $\e>0$ be fixed. Let $\mathcal{\mathscr{H}}$ be a $r$-uniform, $D$-regular hypergraph on $N$ vertices such that $D>N^\e$. If
$$
\Delta_{\ell}(\mathscr{H})<D^{\frac{r-\ell}{r-1}-\e} \quad \text { for } \ell=2, \ldots, r-1
$$
and $\Gamma(\mathscr{H})<D^{1-\e}$ then the random greedy independent set algorithm produces an independent set $I$ in $\mathscr{H}$ with
$$
|I|=\Omega\left(N \cdot\left(\frac{\log N}{D}\right)^{\frac{1}{r-1}}\right)
$$
with probability $1-\exp \left\{-N^{\Omega(1)}\right\}$.
\end{theorem}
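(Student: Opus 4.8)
The plan is to run the differential equation method, following the process for about $i_1:=\lfloor mN(\log N/D)^{1/(r-1)}\rfloor$ steps, where $m\in(0,1)$ is a small absolute constant. Since exactly one vertex joins the independent set at each step (so $|I(i)|=i$), it suffices to show that, with probability $1-\exp\{-N^{\Omega(1)}\}$, the open-vertex set $V(i)$ is nonempty for every $i\le i_1$; this already forces $|I|\ge i_1=\Omega\big(N(\log N/D)^{1/(r-1)}\big)$, so no delicate ``endgame'' analysis near the end of the process is needed. (The hypothesis $r\ge 3$ is what makes the relevant hierarchy nontrivial; for $r=2$ the $\Delta_\ell$ conditions are vacuous and the statement is a different, classical one.)

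First I would fix the random variables to track. Let $Q(i):=|V(i)|$, and for $2\le j\le r$ let $Y_j(i)$ be the number of edges of $\mathscr{H}$ having exactly $r-j$ vertices in $I(i)$ and exactly $j$ vertices in $V(i)$ --- the partially ``consumed'' edges, so $Y_r(0)=ND/r$. For $v\in V(i)$ let $\tilde D_v(i)$ count the edges $e\ni v$ with $r-2$ vertices of $e$ in $I(i)$ and the last vertex of $e$ in $V(i)$; these are exactly the ``threats'' that can close $v$, each realized in step $i+1$ with probability $1/Q(i)$, and $\sum_{v\in V(i)}\tilde D_v(i)=2Y_2(i)$. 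A short computation gives, to leading order,
\[
\bE\big[Q(i+1)-Q(i)\mid\mathscr{F}_i\big]=-1-\frac{2Y_2(i)}{Q(i)},
\]
while $Y_j$ is fed by $Y_{j+1}$ at rate $\approx(j+1)Y_{j+1}(i)/Q(i)$ and is depleted --- through advancement by a chosen vertex and through collateral closing of one of its open vertices --- at rate $\approx \tfrac{j\,Y_j(i)}{Q(i)}\big(1+\tfrac{2Y_2(i)}{Q(i)}\big)$, with $Y_{r+1}\equiv0$. Rescaling $t=i/N$ and $Q\approx qN$, one checks that $Y_j\approx\tfrac{DN}{r}\binom rj t^{r-j}q(t)^{j}$ solves the limiting system, in which $q$ satisfies $(\log q)'=-(r-1)Dt^{r-2}$ (dropping the lower-order $-1$), i.e.\ $q(t)=\exp(-Dt^{r-1})$. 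Hence $q(i_1/N)\,N$ is a positive power of $N$ for every fixed $m<1$, and this exponential decay of $q$ --- run until $q\approx N^{-\Theta(1)}$ --- is exactly where the $\log N$ in the conclusion comes from.

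The heart of the proof is concentration: showing that $Q(i)$, every $Y_j(i)$, and $\max_{v\in V(i)}\tilde D_v(i)$ stay within a small relative error of the trajectories above, simultaneously, for all $i\le i_1$. I would subtract the computed drifts to form the natural super/sub-martingales, bound their one-step increments and conditional variances, apply Freedman's inequality, and union-bound over the $O(r)$ edge variables, the $N$ vertex variables $\tilde D_v$, and the $i_1\le N$ steps (the bound $D>N^\e$ makes all the relative errors polynomially small, hence the $\exp\{-N^{\Omega(1)}\}$ failure probability). The degree hypotheses $\Delta_\ell(\mathscr{H})<D^{(r-\ell)/(r-1)-\e}$ are used to show the error terms really are of lower order --- in effect that $I(i)$ is pseudorandom for the purpose of counting partially consumed edges --- and to bound how many vertices a single choice can close or how many edges it can advance; the codegree hypothesis $\Gamma(\mathscr{H})<D^{1-\e}$ is what keeps the conditional variances of these one-step changes small, since a single choice completing threats to two distinct open vertices, or advancing two edges at once, is governed precisely by $(r-1)$-codegrees. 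This smallness is what lets Freedman succeed even though individual increments can themselves be as large as a small power of $N$.

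The step I expect to be the main obstacle is keeping this coupled bootstrap under control: the errors must be propagated through the hierarchy $Y_r\to Y_{r-1}\to\cdots\to Y_2\to Q$ without amplification, and kept sub-leading all the way to step $i_1$, by which point $Q$ has shrunk to a small power of $N$ --- and $Q$ sits in the denominator of every promotion and depletion rate, so relative errors there are magnified exactly when they are hardest to afford. Loosening the degree/codegree hypotheses just enough to admit hypergraphs such as the sunflower hypergraph $\mathscr{H}_{n,w}$, while still running this bootstrap to completion, is precisely what the modified version of the theorem in this paper must accomplish.
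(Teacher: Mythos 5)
Your high-level plan is sound and closely parallels what the Bennett--Bohman proof (and the present paper's modification of it, Theorem~\ref{thm:ind}) actually does: identify a stopping index $i_1=\Theta\bigl(N(\log N/D)^{1/(r-1)}\bigr)$, heuristically model $I(i)$ as a binomial set so that $q(t)=e^{-t^{r-1}}$ in scaled time, track the open-vertex count and ``partially-consumed'' edge quantities by the differential equation method, and close the argument with Freedman's inequality. Your computation that $q(i_1/N)N$ remains a positive power of $N$ for any fixed $m<1$ is the right reason the process runs this long, and the role you assign to $D>N^\e$ (polynomially small relative errors $\Rightarrow$ $\exp\{-N^{\Omega(1)}\}$ failure probability) is also right.

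However, there is a genuine gap in your choice of tracked variables. You propose to follow the aggregates $Y_2,\dots,Y_r$, the open-vertex count $Q$, and the single per-vertex family $\tilde D_v=d_2(v)$. This is not enough to run the bootstrap you describe. The drift of $\tilde D_v$ is fed by $d_3(v)$ --- the number of edges through $v$ with exactly two more steps to become a threat --- and the aggregate $Y_3$ contains no information about $d_3(v)$ for a \emph{particular} $v$; it only controls the average $\tfrac1Q\sum_v d_3(v)$. The static hypothesis $\Delta_\ell(\mathscr{H})<D^{(r-\ell)/(r-1)-\e}$ does not rescue this: the crude bound $d_3(v)\le\binom{|I|}{r-3}\Delta_{r-2}$ overshoots the intended trajectory $s_3(t)\asymp D^{2/(r-1)}t^{r-3}q^2$ by a large power of $N$. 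The same issue appears in the boundedness step for each $Y_j$: the one-step increase of $Y_j$ when $v$ is chosen is $d_{j+1}(v)$, so Freedman needs a dynamic bound on $\max_v d_{j+1}(v)$, which again is a per-vertex quantity for each $j$. In short, you cannot avoid tracking the full hierarchy $d_\ell(v)$ for \emph{all} $2\le\ell\le r$ and all $v$ simultaneously; that is exactly what the papers do (they decompose $d_\ell(v)=d_\ell(v,0)+d_\ell^+(v)-d_\ell^-(v)$ and track each piece). Once you track those, the aggregates $Y_j$ become redundant.

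A second, smaller omission: to bound the one-step decrements of $d_\ell^-(v)$ (the ``collateral'' losses when an open vertex of an edge gets closed, including losses forced by the redundant-edge convention), you need dynamic control of set-degrees $d_{A\uparrow b}(i)$ and pairwise codegree quantities $c_{a,a'\to k}(v,v',i)$ in the evolving hypergraph $\mathscr{H}(i)$, not merely the static $\Delta_\ell$ and $\Gamma$ bounds on $\mathscr{H}(0)$ --- edges shrink as the process runs, and the number of edges of a given size through a given set can grow. The Bennett--Bohman proof and the present paper both maintain these as additional coordinates of the good event and establish their concentration by the same Freedman machinery. Your remark that $(r-1)$-codegrees govern the variances is pointing at the right object, but the argument needs these dynamic bounds as explicit supermartingale inputs rather than one-time hypotheses.
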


\begin{remark}Note that under relaxed assumptions (assuming only that the maximum degree is $D$), one can prove the existence of an independent set of size $\Omega(ND^{-\frac{1}{r-1}})$ using either the alterations method or the Lov\'asz Local Lemma. Theorem \ref{thm:bennettBohman} gives a logarithmic improvement.
\end{remark}

 In this work, we relax the assumptions of Theorem \ref{thm:bennettBohman} in several ways. For example, we allow that the hypergraph $\mathscr{H}$ no longer be $r$-uniform, allowing some smaller edges.  
We also allow for some variation in the degrees of vertices in $\mathscr{H}$, only requiring them to be $(1+o(1))D$. We allow the set degrees $\Delta_\ell(\mathscr{H})$ to be larger.
And perhaps most importantly, due to our interest in the sunflower hypergraph, we are in need of a theorem which significantly relaxes the assumption on the $(r-1)$-codegree of $\mathscr{H}$. We will allow the $(r-1)$-codegrees to be larger, and also we will allow there to be some pairs of vertices for which we do not assume anything at all about the $(r-1)$-codegree. This is important, since we will later show that when looking at the sunflower hypergraph $\hnw$ in the regime where $w = n^\alpha$ and $\frac{1}{2}< \alpha <1$, the $(r-1)$-codegree of two vertices $v, v^{\prime} \in \mathscr{V}_{w,r} $ corresponding to hyperedges $e_{\K}, e_{\K}^{\prime} \in \K_{n,w}$ is maximized when $e_{\K}$ and $e_{\K}^{\prime}$ have large intersection. In fact, when $|e_{\K} \cap e_{\K}'| = w-1$ the $(r-1)$-codegree is nearly $D$. We will see that in this regime, most sunflowers in $\K_{n,w}$ will have relatively small kernels, so the sets forming a sunflower typically barely intersect and mostly avoid each other. Therefore if $e_{\K}$ and $e_{\K}^{\prime}$ have a large intersection, there will be many sets in $\K_{n,w}$ that mostly avoid both of them, and could easily form a sunflower with either one. \\

We will need to assume this phenomenon (pairs with high $(r-1)$-codegree) to be relatively rare. Toward this end, let us define the set of \textit{bad pairs} as follows. The definition uses a parameter $\phi$ which can be thought of as small.

\begin{definition}[Bad Pairs]\label{def:BadVertices}
   Vertices $v, v^{\prime}\in V
   \left(\mathscr{H}\right)$ form a bad pair if their $(r-1)$-codegree is at least $\phi D$. For a fixed vertex $v$, the set of vertices $v^{\prime}$ which form a bad pair with $v$ will be denoted by $B(v)$. 
\end{definition}

Roughly speaking, bad pairs are highly correlated. Supposing that $v, v'$ is a bad pair and $v'$ is chosen to go in our independent set, then $v$ becomes significantly more likely to end up in the independent set. Indeed, there are many $(r-1)$-sets $S$ such that $\{v\} \cup S$ and $\{v'\} \cup S$ are both hyperedges. When we choose $v'$, due to step \eqref{step2} in our description of the process, we remove $v'$ from $\{v'\} \cup S$, leaving just $S$. But since $S \subseteq \{v\} \cup S$, in step \eqref{step3} we delete the hyperedge   $\{v\} \cup S$. Since there are many such sets $S$, the degree of $v$ drops by a lot in the step where we pick $v'$. This means $v$ becomes less likely to ever be closed, meaning that it is more likely to eventually be chosen. We will discuss more specific examples of this phenomenon in Sections \ref{sec:sumfree} and \ref{sec:blowup}.

To state our next theorem we will need the following definitions. For a (possibly non-uniform) hypergraph $\mathscr{H}$, we say that $\mathscr{H}$ is \textit{$r$-bounded} if every edge has size at most $r$. We let $\mathscr{H}^{(s)}$ denote the $s$-uniform hypergraph consisting of all edges of size $s$ from $\mathscr{H}$.

\begin{definition}[Degrees of Sets] \label{def:DegreesOfSets}For a set of vertices $A$, let $d_{A \uparrow b}$ be the number of edges of size $b$ containing $A$ in $\mathscr{H}$. If $A =\{v\}$ is a singleton then we just write $d_{v \uparrow b}$.
\end{definition}

\begin{definition}[Bad Degree of Sets]
   For a set of vertices $A$, let $d^{\prime\prime}_{A \uparrow b}$ be the number of edges of size $b\ge |A|+2$ containing $A$ and a pair of bad vertices $v, v' \notin A$.
\end{definition}

Finally, we say that a sequence of events $\mc{E}_N$ occurs \textit{asymptotically almost surely} or \textit{a.a.s.} if  $\Pr[\mc{E}_N] \rightarrow 1$ as $N \rightarrow \infty$. With these definitions in mind, we can state our main theorem as follows.

\begin{theorem}\label{thm:ind} Fix an integer $r\ge 3$, and positive functions $\phi=\phi(N) \le \log^{-300r}N$ and  $D=D(N)\ge \phi^{-r}$. Suppose $\mathscr{H}$ is an $r$-bounded hypergraph on $N$ vertices such that  

\begin{enumerate}
    \item \label{cond:almostregular} for every vertex $v$, we have $d_{v \uparrow r} \in [(1-\phi)D, (1+\phi)D]$, 
    \item \label{cond:maxLdegree}   for  $2 \le \ell < k \le r$ we have $
\Delta_{\ell}(\mathscr{H}^{(k)}) \le \phi D^{\frac{k-\ell}{r-1}}$,
\item \label{cond:max1degree}  for  $2 \le k \le r-1$ we have $
\Delta_{1}(\mathscr{H}^{(k)}) \le \phi D^{\frac{k-1}{r-1}}$,
\item \label{cond:Bbound}  for all $v \in V(\mathscr{H})$ we have $|B(v)| \le \phi D^{\frac{1}{r-1}}$, and
\item \label{cond:ddubprime}  for all $A \subseteq V(\mathscr{H})$ with $|A|\ge 1$ and  $|A|+2 \le b \le r$ we have $d^{\prime\prime}_{A \uparrow b}(\mathscr{H}) \le \phi D^{\frac{b-|A|}{r-1}}$.
\end{enumerate} 
 Then the random greedy independent set algorithm produces an independent set $I$ in $\mathscr{H}$ with
$$
|I|=
\Omega\left(N \cdot\left(\frac{\log 1/\phi}{D}\right)^{\frac{1}{r-1}}\right) 
$$
asymptotically almost surely.
\end{theorem}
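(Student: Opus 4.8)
The plan is to run the differential-equation method on the random greedy process, taking as the principal tracked quantity the number of open vertices $Q(i):=|V(i)|$ and following the overall scheme of \cite{BennettBohmanNoteOnRandom}, but carrying extra error terms throughout to absorb the five relaxed hypotheses. Since exactly one vertex enters the independent set at each step, and that vertex is always addable (an open vertex never belongs to a size-$1$ edge of the current hypergraph, as it would have been closed first), we have $|I(i)|=i$; hence it suffices to prove that a.a.s.\ $Q(i)>0$ for every $i$ up to $m^{*}:=cN(\log(1/\phi)/D)^{1/(r-1)}$ for a suitably small constant $c=c(r)$, and then $|I|\ge m^{*}$.

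For the heuristic trajectory, note that an open vertex $u$ is closed at step $i+1$ exactly when the chosen vertex $v_{i+1}$ forms a size-$2$ edge with $u$ in $\mathscr{H}(i)$; thus the number of vertices closed at step $i+1$ is $d_{v_{i+1}\uparrow 2}(\mathscr{H}(i))$, with conditional expectation $2|\mathscr{H}(i)^{(2)}|/Q(i)$. A size-$2$ edge of $\mathscr{H}(i)$ is the remnant of an edge of size $k\le r$ that has survived Step~\eqref{step3} and whose other $k-2$ vertices lie in $I(i)$; the dominant contribution comes from the $r$-edges, and --- setting aside for now the smaller edges, the correlations, and the bad pairs --- a self-consistent mean-field computation shows that a typical open vertex carries roughly $(r-1)D(i/N)^{r-2}q(t)$ such size-$2$ edges, where $q(t):=Q(i)/N$. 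Rescaling by $i=tND^{-1/(r-1)}$ then yields an autonomous ODE for $q$ whose solution decays like $\exp(-\Theta(t^{r-1}))$; running until $q(t)$ has dropped to roughly $\phi$ gives $t_{\max}=\Theta((\log(1/\phi))^{1/(r-1)})$, hence $m^{*}=\Theta(N(\log(1/\phi)/D)^{1/(r-1)})$ steps as desired. One must also carry auxiliary variables --- chiefly evolving upper bounds on $\max_{v}d_{v\uparrow 2}(\mathscr{H}(i))$, on the local degrees $d_{v\uparrow k}(\mathscr{H}(i))$ for $k<r$, and on the relevant codegrees between open vertices --- since these govern the size of the one-step increments needed for the concentration step.

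The heart of the argument is to show that each relaxation perturbs this trajectory only by a multiplicative factor $1+O(\phi)$, so that the estimates of \cite{BennettBohmanNoteOnRandom} survive. Hypotheses \eqref{cond:almostregular}--\eqref{cond:max1degree} are calibrated precisely so that the size-$2$ edges arising from edges of size $k<r$, and the error caused by the enlarged set-degrees $\Delta_{\ell}(\mathscr{H}^{(k)})$, each contribute at most a $\phi$-fraction of the main term at the relevant time scale --- the exponents $\tfrac{k-\ell}{r-1}$ and $\tfrac{k-1}{r-1}$ are exactly the ones that make this balance hold. The genuinely new difficulty is the near-total relaxation of the codegree hypothesis: if $v_{i+1}$ forms a bad pair with some open vertex $v$, then Step~\eqref{step3} can delete up to $\Theta(D)$ edges at $v$ in a single step, which would destroy the concentration of $Q$ if it recurred too often. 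Here hypotheses \eqref{cond:Bbound} and \eqref{cond:ddubprime} enter: the first caps, for each $v$, the number of its bad partners at $\phi D^{1/(r-1)}$ --- of the same order as the local degree at $v$ late in the process --- and the second forces edges containing a bad pair to be rare at every scale, so that the total number of edge-deletions attributable to bad pairs, summed over $i\le m^{*}$, is $o(1)$ times the total number of edge-deletions. One therefore isolates these contributions at the outset and controls them by crude union bounds, leaving the main trajectory analysis untouched.

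With every error term in place, concentration follows from the usual martingale argument: one writes $Q(i)$ minus its deterministic trajectory value (and likewise each auxiliary variable) as a supermartingale with increments bounded through the auxiliary variables, and applies Freedman's inequality --- or a bounded-differences estimate up to a suitable stopping time --- to conclude that a.a.s.\ $Q(i)$ stays within $(1+o(1))$ of the heuristic value for all $i\le m^{*}$, in particular remaining positive. Because $D$ may be only polylogarithmic in $N$ (it need only exceed $\phi^{-r}$), the failure probabilities one obtains are not exponentially small in $N$ --- only of the form $N^{-\Omega(1)}$ --- which is why the conclusion is stated a.a.s.\ rather than with probability $1-\exp(-N^{\Omega(1)})$ as in Theorem~\ref{thm:bennettBohman}. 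I expect the main obstacle to be exactly the bookkeeping described in the third paragraph: isolating the contribution of bad pairs and of the surplus small edges, and checking that the weakened hypotheses \eqref{cond:maxLdegree}--\eqref{cond:ddubprime} still keep every error term below the $\phi$-fraction threshold at every point along the trajectory.
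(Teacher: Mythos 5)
Your proposal follows essentially the same route as the paper: track $|V(i)|$ together with auxiliary degree/codegree variables via the differential-equation method, use the trajectory $q(t)=e^{-t^{r-1}}$ and the time-rescaling $t=iD^{1/(r-1)}/N$, isolate the vertices affected by bad pairs (the paper's set $M(i)$) and handle them with coarser bounds while tracking the rest precisely, and close the argument with Freedman/Hoeffding supermartingale estimates. Your observation about why the conclusion is a.a.s.\ rather than with $\exp(-N^{\Omega(1)})$ failure probability is also consistent with the paper's error bounds, which are of the form $\exp(-\mathrm{poly}(1/\phi))$.
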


\begin{remark}
    The hypotheses of Theorem \ref{thm:ind} imply that $\mathscr{H}$ is not too dense. Specifically, counting $|\mathscr{H}|$ in two ways we have
$$
(1-\phi)\frac{N D}{r}\leq |E(\mathscr{H})| \leq \frac{1}{\binom{r}{2}}\binom{N}{2}\phi D^{\frac{r-2}{r-1}} .
$$
This implies that we have
\be
\begin{split}\label{eq:LowerBoundonN}
    N=\Omega\left(D^{\frac{1}{r-1}}\phi^{-1}\right), \qquad D = O\rbrac{\phi^{r-1} N^{r-1}} \eqpd
\end{split}
\ee
\end{remark}

\bigskip
A similar result was recently obtained by Guo and Warnke \cite{GW}, so we briefly discuss the similarities and differences. Instead of analyzing the random greedy process, Guo and Warnke \cite{GW} consider a semi-random ``nibble'' version. In other words, they consider a random process in which one ``step'' involves choosing many potential vertices to go into the independent set, and possibly discarding some of them which would cause a problem. The main theorem in \cite{GW} requires only an upper bound on vertex degrees, whereas our Theorem \ref{thm:ind} requires almost-regularity. Their main theorem requires the hypergraph to be $r$-uniform, while our Theorem \ref{thm:ind} does not. Their main theorem also has conditions very similar to our Condition \eqref{cond:maxLdegree}, as well as a stronger (compared to Theorem \eqref{thm:ind}) assumption on the $(r-1)$-codegrees. Roughly speaking, in the language of our paper, the main theorem in \cite{GW} assumes there are no bad pairs of vertices. For that reason, the result of Guo and Warnke could not be directly applied to the hypergraph $\hnw$ to obtain a sunflower-free collection.\\

As mentioned, one of the central motivations behind introducing some of the conditions in Theorem \ref{thm:ind}, is in order to apply it to the sunflower-free process. In doing so, we obtain the following. (Note that here $r$ is held constant while $w$ grows with $n$.)

\begin{theorem}\label{thm:SunflowerTheorem} Fix $r\geq 3$ and  let $w=n^\alpha$ for some fixed $\alpha \in \rbrac{\frac12, 1}$. Let $N = \binom{n}{w}$ and $D$ be defined as in \eqref{eq:SF-Ddef}. The sunflower-free process run on a universe of size $n$ produces a $w$-uniform $r$-sunflower-free  family of size
\[
  \Omega \rbrac{
 (w^2/n)^\frac{1}{r-1}
 ND^{-\frac{1}{r-1}}}
\]
asymptotically almost surely.

\end{theorem}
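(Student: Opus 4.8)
The plan is to deduce Theorem~\ref{thm:SunflowerTheorem} from Theorem~\ref{thm:ind} applied to the sunflower hypergraph $\hnw$. Running the random greedy independent set process on $\hnw$ is the sunflower-free process: a $w$-set $e$ is still \emph{acceptable} exactly when no $r-1$ already-chosen $w$-sets form an $r$-sunflower with $e$, which by the contraction rules~\eqref{step2}--\eqref{step3} is precisely the event that the vertex $e$ is still open. Two of the five hypotheses of Theorem~\ref{thm:ind} are immediate: $\hnw$ is $r$-uniform, so $\hnw^{(k)}=\emptyset$ for $k<r$ and Condition~\eqref{cond:max1degree} is vacuous, and by vertex-transitivity $\hnw$ is \emph{exactly} $D$-regular, where $D$ is the number of $r$-sunflowers through a fixed $w$-set (the quantity of Proposition~\ref{prop:Hnpw}, i.e.~\eqref{eq:SF-Ddef}), so Condition~\eqref{cond:almostregular} holds with no slack. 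Everything then reduces to verifying Conditions~\eqref{cond:maxLdegree}, \eqref{cond:Bbound}, and~\eqref{cond:ddubprime} for an appropriate $\phi$; Theorem~\ref{thm:ind} will then give $|I|=\Omega\bigl(N(\log(1/\phi)/D)^{1/(r-1)}\bigr)$, and the task becomes to arrange $\log(1/\phi)=\Theta(w^2/n)$.

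The combinatorial input is one structural observation: in a sunflower all pairwise intersections equal the kernel, so a partial sunflower determines its kernel $K$, and the number of ways to complete it to a full $r$-sunflower is a product of ``choose pairwise-disjoint $(w-|K|)$-subsets avoiding what has been used'' counts, i.e.\ a ratio of falling factorials. Expanding logarithms through $\log(n-m)! = \log n! - m\log n + m^2/(2n) + \cdots$ (equivalently $(1-cw/n)^w = e^{-(1+o(1))cw^2/n}$), each such ratio splits as a main term of order $w\log(n/w)$ matching the corresponding fractional power of $D$, plus a correction of order $w^2/n$; that correction is the source of the gain. Carrying this out yields: (i) $\log D = (r-1)w\log(n/w) + O(w)$, with the sum defining $D$ concentrated on kernels of size $\asymp w^{r}/n^{r-1}=o(w)$, so typical sunflowers barely intersect; (ii) for $2\le\ell\le r-1$, $\Delta_\ell(\hnw^{(r)})$ is maximized by pairwise-disjoint $\ell$-tuples of $w$-sets (empty kernel) and equals $D^{(r-\ell)/(r-1)}\,e^{-(1+o(1))(r-\ell)(\ell-1)w^2/(2n)}$, where $(r-\ell)(\ell-1)\ge r-2\ge1$; (iii) the $(r-1)$-codegree of two $w$-sets meeting in $j$ points is $D\,e^{-(1+o(1))(r-1)w(w-j)/n}$, so such a pair is \emph{bad} only when $|e\cap e'|\ge(1-c)w$ with $c=\Theta(1)$ tuned to $\phi$, and then $|B(v)|\le cw\binom{w}{cw}\binom{n-w}{cw}=e^{cw\log(n/w)+O(w)}$; (iv) a bad pair inside a sunflower forces its kernel to have size $\ge(1-c)w$, so $d^{\prime\prime}_{A\uparrow r}\le\sum_{s\ge(1-c)w}D_s=e^{(r-1)cw\log(n/w)+O(w)}$ because $D_s$ decays super-geometrically past its (small) peak.

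Finally one chooses $\phi=e^{-\beta w^2/n}$ with $\beta=\beta(r)=\min\{1,(r-2)/3\}$, a positive constant; then $c=\Theta(\beta/(r-1))<1$. With this choice: Condition~\eqref{cond:maxLdegree} holds because $\beta<(r-2)/2$, so the exponent $(r-\ell)(\ell-1)w^2/(2n)$ beats $\beta w^2/n$ for every $\ell\in[2,r-1]$; Condition~\eqref{cond:Bbound} holds because $c$ is bounded away from $1$, whence $\log|B(v)|\le cw\log(n/w)+O(w) < w\log(n/w)+O(w) = \log\bigl(\phi D^{1/(r-1)}\bigr)$; Condition~\eqref{cond:ddubprime} holds because $(r-1)c=\Theta(\beta)\le2\le r-|A|$, whence $\log d^{\prime\prime}_{A\uparrow r}\le(r-1)cw\log(n/w)+O(w) < 2w\log(n/w)+O(w)\le\log\bigl(\phi D^{(r-|A|)/(r-1)}\bigr)$; and the side hypotheses $\phi\le\log^{-300r}N$ and $D\ge\phi^{-r}$ hold because $w^2/n=n^{2\alpha-1}$ dwarfs every power of $\log N$ yet is itself $o(\log D)=o(w\log n)$. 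Since $\log(1/\phi)=\beta w^2/n=\Theta(w^2/n)$, Theorem~\ref{thm:ind} delivers $|I|=\Omega\bigl((w^2/n)^{1/(r-1)}ND^{-1/(r-1)}\bigr)$, as claimed. The main obstacle is the asymptotic bookkeeping behind~(i)--(iv): one must keep the $w^2/n$-sized corrections to $\log D$, to $\Delta_\ell(\hnw^{(r)})$, and to the codegrees simultaneously under control, and then confirm that a \emph{single} $\phi=e^{-\Theta(w^2/n)}$ satisfies all five hypotheses at once---Conditions~\eqref{cond:maxLdegree} and~\eqref{cond:ddubprime} pull $\beta$ in opposite directions, so the admissible window for $\beta$ must be checked to be nonempty for every $r\ge3$.
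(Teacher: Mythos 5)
Your proposal is correct and follows essentially the paper's route: apply Theorem~\ref{thm:ind} to $\hnw$ with $\phi = e^{-\Theta(w^2/n)}$, verify Conditions~\eqref{cond:maxLdegree}, \eqref{cond:Bbound}, and \eqref{cond:ddubprime} via Stirling expansions together with the observation that a bad pair must have intersection $\ge (1-c)w$ for a fixed $c<1$ (the paper simply takes $c=1/2$ with ample slack rather than tuning $c$ to $\beta/(r-1)$), and read off $\log(1/\phi)=\Theta(w^2/n)$. Your exponent $(r-\ell)(\ell-1)$ for $\log\bigl(D^{(r-\ell)/(r-1)}/\Delta_\ell(\hnw)\bigr) \cdot 2n/w^2$ is in fact the correct simplification, and incidentally corrects a small algebraic slip in the paper's Claim~\ref{clm:SF-Delta} (which prints $\frac{r+1}{r-\ell}$ where $(r+1)(r-\ell)$ belongs), though both forms yield the same lower bound of $1$ that makes the argument go through.
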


Let us define the following parameter for the sunflower-free process,
\[
\kappa(n,w) \coloneqq \left(\frac{w^2}{nD}\right)^{\frac{1}{r-1}}\eqpd
\]
Theorem \ref{thm:SunflowerTheorem} then says that the sunflower-free process is able to construct a hypergraph with a $\kappa(n,w)$ fraction of the total number of possible edges, asymptotically almost surely, before terminating.  Theorem \ref{thm:SunflowerTheorem} gives an improvement by the factor $(w^2 / n)^\frac{1}{r-1}$ over the alterations method or Lov\'asz Local Lemma (see the remark after Theorem \ref{thm:bennettBohman}).
  Of course Theorem \ref{thm:SunflowerTheorem} only gives a lower bound on the final size of the sunflower-free family, so one might hope that the truth is significantly larger. However, the bound in Theorem \ref{thm:SunflowerTheorem} follows from the bound in Theorem \ref{thm:ind}, and it is known that there exist instances (e.g. when $\mathscr{H}$ is a random hypergraph) where the largest independent set in $\mathscr{H}$ matches the lower bound up to a constant factor. Furthermore, there exist hypergraphs $\mathscr{H}$ that do have much larger independent sets, but for which it is known that the random greedy algorithm terminates long before finding one. For example, Mantel's theorem gives us that the largest independent set in $\mathscr{H}_{K_3}$ (see Definition \ref{def:hfree}) has size $\lfloor n^2/4 \rfloor$. But the independent process on $\mathscr{H}_{K_3}$, i.e.~the triangle-free process, terminates after only $O(n^{3/2} \log^{1/2} n) $ steps a.a.s. \cite{bohmankeevashtriangle, pontiverosgriffithsmorris} (note that this upper bound number of steps matches the lower bound given by Theorem \ref{thm:ind}). Bohman and Keevash \cite{bohmankeevashhfree} conjecture that in general, the lower bound they proved for the $H$-free process (which matches what we get from Theorem \ref{thm:ind}) is correct up to a constant factor. This conjecture has
been verified in a few more cases, including the $K_4$-free process \cite{Wa3,Wz2}
and the $ C_\ell$-free process for all $ \ell \ge 4 $ \cite{P2,P3,Wa2}. In light of all this, it seems more likely that the lower bound in Theorem \ref{thm:SunflowerTheorem} is not far from the truth. If that is the case, then the family produced by the sunflower-free process is small compared to other known constructions. Indeed, in some of the best known constructions, the authors make use of the fact that the precise answer is known for very small cases of $r$ and $w$ (e.g., \cite{AbbottGardner}) and then use structural properties of set systems to iteratively combine small sunflower-free families on disjoint ground sets, to form larger ones \cite{abbott1992set,abbott1972intersection}. However, it's the case that the sunflower-free family given by Theorem \ref{thm:SunflowerTheorem} is still relatively large for a ``random-looking'' sunflower-free family. To be more precise, by ``random-looking,'' we mean that our sunflower-free family has \textit{pseudorandom} properties, i.e.~it shares some properties with a uniformly random family of $w$-sets of the same density. In this paper we do not go out of our way to prove these pseudorandom properties, but some are established as part of the proof of Theorem \ref{thm:ind}. For example, the proof implies that in our final $r$-sunflower-free family, certain substructure counts are pseudorandom (approximately the same as they would be in a uniform random family of the same density). In \cite{BennettBohmanNoteOnRandom}, the first author and Bohman proved that under the assumptions of Theorem \ref{thm:bennettBohman}, a wide variety of substructure counts in the final independent set are pseudorandom. This ought to also be the case in the setting of Theorem \ref{thm:ind}, and the authors plan to pursue some preliminary work they have already done on such a result. It is worth noting that Guo and Warnke \cite{GW} proved that their semi-random process produces an independent set with pseudorandom properties.\\ 

When considering the bound in Theorem \ref{thm:SunflowerTheorem}, we know due to the sunflower lemma of Erd\H{o}s and Rado, that any family containing at least $w!(r-1)^{w}$ sets must contain a sunflower. This bound was improved in the breakthrough result of Alweiss et al. \cite{AlweissLovettWuZhang}, which was quickly followed by improvements by Rao \cite{Rao2020Coding}, and Bell et al. \cite{BellNoteOnSunflowers} to give that $(\alpha r\log(w)) )^w$ sets are sufficient for guaranteeing the existence of a sunflower, for some absolute constant $\alpha$. What this implies in our setting is that, when $n$ is sufficiently large such that $\binom{n}{w}$ is much larger than $(\alpha r\log(w)) )^w$, adding additional points to the universe won't help the algorithm to construct a larger $r$-sunflower-free  family. In particular, for fixed $w$ it makes sense for $\kappa(n,w)$ to increase as a function of $n$ up to some threshold ${n^{\ast}}$, after which, it should tend to $0$ as $n \rightarrow \infty$. Thus, characterizing this threshold behavior as a function of $n$, is an interesting question in its own right.

\subsection{Discussion of the conditions in Theorem \ref{thm:ind}}

Here we discuss why we need something resembling the assumptions of Theorem \ref{thm:ind}, and specify a couple of additional examples that satisfy those conditions, but do not satisfy the conditions of Theorem \ref{thm:bennettBohman}.\\

Our analysis of the process is fundamentally motivated by tracking the number of open vertices, and so we must also keep track of how many vertices become closed. The mechanism that causes a vertex $v$ to become closed is an edge $e$ that contains $v$ where all other vertices in $e$ are in the independent set $I$. This does not happen all at once: in the beginning none of the vertices of $e$ are in $I$, and as the process runs we put vertices of $e$ into $I$. To keep track of this phenomenon, for each vertex $v$ we will need to estimate $d_{\ell}(v)$, which is the number of edges containing $\ell$ open vertices including $v$, with the rest of the vertices in the edge belonging to $I$. So we will keep track of a large system of random variables including both the number of open vertices and these variables $d_\ell(v)$.
For each of these random variables, we will show that it is a.a.s.~within a relatively small error of some deterministic function, called the \textit{trajectory} of that random variable. (Technically, we will only do this for {\em almost} all the variables $d_\ell(v)$, allowing for the possibility that a few of them deviate significantly from their trajectories)\\

The differential equation method naturally lends itself to treating all vertices the same, and so it is hard to imagine analyzing the process without something like Condition \eqref{cond:almostregular}.  For the differential equation method, we also need a \textit{boundedness hypothesis} for the variables we track, meaning that we need a bound on how large the change can be over one step of the process. Condition \eqref{cond:maxLdegree} is reminiscent of the ``spreadness'' condition in \cite{AlweissLovettWuZhang} which was an integral ingredient in achieving an improved upper bound on the theorem of Erd\H{o}s and Rado. A hypergraph $\mathscr{H}$ is $\kappa$-spread if $\Delta_\ell(\mathscr{H}) \le \kappa^\ell |\mathscr{H}|$ for each $\ell$. Roughly speaking, spreadness translates to the assumption that the degree of a subset of vertices decreases by a factor of $\kappa$ as the set increases in size by a single element. In our case, Condition \eqref{cond:maxLdegree} helps with the boundedness hypothesis for our variables $d_\ell(v)$. For example, in one step the maximum possible increase in $d_{r-1}(v)$ is $\Delta_2(\mathscr{H}^{(r)})$. It is worth noting that, in the absence of the imposition of a spread-like condition, the independent process can behave very differently. Indeed, Picollelli \cite{P4} studied the $H$-free process where $H = K_4^-$ is formed by removing one edge from $K_4$. This process is of course equivalent to the independent process on the hypergraph $\mathscr{H}_{K_4^-}$, which does not satisfy Condition \eqref{cond:maxLdegree}. While Picollelli was still able to analyze the process, the trajectories of all the random variables significantly deviated from what we will show is true of the independent process under the assumptions of Theorem \ref{thm:ind}.  Condition \eqref{cond:max1degree} is there to guarantee that even though we allow for some smaller edges, they will have a negligible effect, in the sense that most vertex closures should be from edges that originally had size $r$. For the applications we have in mind (e.g.~the sum-free process discussed in the next section), we view the smaller edges as degenerate cases which should not affect much. Condition \eqref{cond:Bbound} is also related to the boundedness hypothesis. For example, when we choose $v_i$ to go into our independent set, for an arbitrary vertex $v$, $d_r(v)$ could decrease by the $(r-1)$-codegree of $v$ with $v_i$  due to Step \eqref{step3}. We actually do allow some pairs of vertices to have large $(r-1)$-codegree, but we must put some limit on how many such pairs there are. If $u, v$ have high $(r-1)$-codegree and $u$ is chosen to go in the independent set, then we are forced to give up on tracking $d_\ell(v)$, which might have drastically changed in that one step. From that point on we only use a crude upper bound on $d_\ell(v)$. We manage to analyze the process so long as we can still keep track of $d_\ell(v)$ for almost all open $v$. Condition \eqref{cond:Bbound} allows us to bound the number of $v$ for which we can no longer track $d_\ell(v)$. Condition \eqref{cond:ddubprime} avoids a certain problem in tracking $d_\ell(v)$. Namely, suppose that many edges contained $v$ as well as a bad pair of vertices. As the process runs, we would like to keep track of how these edges contribute to $d_{\ell}(v)$ for each $\ell \ge 2$. These contributions occur due to these edges having vertices chosen to go in the independent set, but if we choose a member of a bad pair, the other member starts to behave unpredictably. Our analysis can handle only a few such contributions to $d_{\ell}(v)$, and Condition \eqref{cond:ddubprime} gives us that these contributions will not become pathological. 

\subsubsection{The sum-free process} \label{sec:sumfree}
Another random process which can also be characterized as finding a large independent set in a hypergraph is that of the \textit{sum-free process}. The sum-free process on $\Z_{2n}$ iteratively constructs a set $S \subseteq \Z_{2n}$, starting with $S=\emptyset$, and in each step adds a uniform random element of $\Z_{2n}$ so long as  $S$ maintains the property that there exists no choice of $a,b,c \in S$, such that $a+b=c$ is satisfied. In this case, this is equivalent to the random greedy independent set process run on  the hypergraph $\mathscr{H}$ with vertex set $\mathscr{V}=\Z_{2n}$, and edge set $\mathscr{E}$ which is the family of all solutions $\{a,b,c\}$ where $a+b=c$ (or the set of all \textit{Schur triples)}. Of course there are a few ``triples'' where not all of $a, b, c$ are distinct, and we avoid those solutions as well. 
In \cite{B20}, the first author showed the following.
\begin{theorem}[Theorem 3 of \cite{B20}]\label{thm:sumfree}
Asymptotically almost surely, the sum-free process run on $\mathbb{Z}_{2 n}$ produces a set $S$ of size
\[
|S| \geq \frac{1}{6} n^{1 / 2} \log ^{1 / 2} n .
\]
\end{theorem}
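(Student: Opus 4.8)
The plan is to realize the sum-free process as the random greedy independent set process on the Schur-triple hypergraph and then invoke Theorem~\ref{thm:ind}. Let $\mathscr{H}$ be the hypergraph on vertex set $\mathbb{Z}_{2n}$ whose size-$3$ edges are the triples $\{a,b,c\}$ of distinct elements with $a+b=c$ in some order, together with the degenerate edges of size $\le 2$ coming from solutions with a repeated coordinate (the pairs $\{a,2a\}$ arising from $2a=c$, and the singleton $\{0\}$ from $0+0=0$). Deleting the vertex $0$ — which can never enter $S$ — leaves an $r$-bounded hypergraph with $r=3$, and the sum-free process is exactly the random greedy independent process on $\mathscr{H}$ (as noted in the discussion of Theorem~\ref{thm:sumfree}), so it suffices to verify Conditions~\eqref{cond:almostregular}--\eqref{cond:ddubprime} of Theorem~\ref{thm:ind}. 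Note Theorem~\ref{thm:bennettBohman} cannot be applied directly: $\mathscr{H}$ has pairs of vertices whose $(r-1)$-codegree is of order $D$, so $\Gamma(\mathscr{H})$ is far too large — this is precisely the phenomenon that the relaxed hypotheses of Theorem~\ref{thm:ind} are designed to accommodate.

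Next I would fix the parameters. Counting ordered solutions of $x+y=z$ containing a fixed vertex $v$ (as $x$, as $y$, or as $z$) shows that every vertex lies in $3n+O(1)$ Schur triples, so I set $D:=3n$, $N:=2n$; the $O(1)$ fluctuation is far below $\phi D$ for any admissible $\phi$, giving Condition~\eqref{cond:almostregular}. For the codegrees, if $v\neq v'$ and $\{a,b\}$ is a $2$-set with $\{v,a,b\}$ and $\{v',a,b\}$ both Schur triples, then checking the $O(1)$ possible case combinations shows that there are $O(1)$ such pairs $\{a,b\}$ unless the identity ``$v+a=b \iff v'+b=a$'' holds, which happens for all $a$ exactly when $v'=-v$. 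Hence the only bad partner of $v$ is $-v$, so $|B(v)|\le 1$, giving Condition~\eqref{cond:Bbound}, while the pair $\{v,-v\}$ genuinely has $(r-1)$-codegree $\Theta(n)=\Theta(D)$. I then take $\phi:=2D^{-1/3}=\Theta(n^{-1/3})$, which satisfies $\phi\le \log^{-300r}N$ and $D\ge \phi^{-r}$ for large $n$, and for which $\log(1/\phi)=\tfrac13\log D - O(1)=\Theta(\log n)$.

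It remains to check the degree conditions, all of which follow from the fact that a fixed vertex or pair completes to only $O(1)$ Schur triples: $\Delta_2(\mathscr{H}^{(3)})=O(1)$ (a pair $\{a,b\}$ extends only via $c\in\{a+b,a-b,b-a\}$) and $\Delta_1(\mathscr{H}^{(2)})=O(1)$ (each $v$ lies in only the pairs $\{v,2v\}$ and $\{v/2,2v\}$, $\{v/2+n,2v\}$ where defined), both far below $\phi D^{1/2}=\Theta(D^{1/6})$, giving Conditions~\eqref{cond:maxLdegree} and~\eqref{cond:max1degree}; and $d''_{\{v\}\uparrow 3}$ counts Schur triples $\{v,u,-u\}$, which is $O(1)$ since $v+u=-u$, $v-u=u$, and $u+(-u)=v$ each pin $u$ to $O(1)$ values, far below $\phi D^{(3-1)/2}=\phi D=\Theta(D^{2/3})$, giving Condition~\eqref{cond:ddubprime}. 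Theorem~\ref{thm:ind} then yields
\[
|S|=|I|=\Omega\!\left(N\left(\frac{\log(1/\phi)}{D}\right)^{\frac{1}{r-1}}\right)=\Omega\!\left(2n\left(\frac{\log n}{3n}\right)^{1/2}\right)=\Omega\!\left(n^{1/2}\log^{1/2}n\right)
\]
asymptotically almost surely, which is the statement of Theorem~\ref{thm:sumfree} up to the value of the constant.

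The main obstacle is that Theorem~\ref{thm:ind} is stated with an implicit $\Omega$, so this route delivers the bound only up to a constant factor and does not by itself recover the explicit $\tfrac16$; obtaining that precise constant requires running the differential-equation analysis directly on the Schur-triple hypergraph and tracking constants throughout the trajectory equations, which is the approach of~\cite{B20}. If one is content with the correct order of magnitude, the only genuinely non-routine step in the reduction above is recognizing the bad pairs $\{v,-v\}$ coming from the reflection symmetry of $\mathbb{Z}_{2n}$, and confirming that although their $(r-1)$-codegree is of order $D$, every vertex has at most one such partner and the corresponding edges are suitably spread out (Conditions~\eqref{cond:Bbound} and~\eqref{cond:ddubprime}); this is exactly the feature of the sum-free process that forces the use of Theorem~\ref{thm:ind} in place of Theorem~\ref{thm:bennettBohman}.
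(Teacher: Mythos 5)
Your proposal is correct and follows essentially the same route the paper itself uses: in Section~1.3.1 the authors encode the sum-free process as the independent process on the Schur-triple hypergraph, delete $0$, take $r=3$, $D=3n$, $\phi=\Theta(n^{-1/3})$, and verify Conditions~\eqref{cond:almostregular}--\eqref{cond:ddubprime}, with the key observation being that the only bad pairs are $\{v,-v\}$; they also note, as you do, that Theorem~\ref{thm:ind} recovers Theorem~\ref{thm:sumfree} only with an unspecified constant in place of $1/6$, the explicit constant requiring the direct analysis of~\cite{B20}. One small slip: after deleting $0$ the vertex count should be $N=2n-1$ rather than $2n$, though this is immaterial to the asymptotics.
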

\noindent
The proof of this theorem similarly required modifying the proof of Theorem \ref{thm:bennettBohman} in order to tolerate a modest number of vertices with large codegree. In fact, we can observe that our new Theorem \ref{thm:ind} implies Theorem \ref{thm:sumfree} (except ``1/6'' is replaced by some unspecified constant). \\

To apply Theorem \ref{thm:ind} to the sum-free process, we can let $r=3$ and $N=2n-1$ (we do not include the vertex $0$ since 0+0=0). We let $D = 3n$ and say $\phi=n^{-1/3}$. To check Condition \eqref{cond:almostregular}, note that every vertex $v$ is in $2n+O(1)$ solutions to $v+b=c$ (only counting cases where $v, b, c$ are distinct) and $n+O(1)$ solutions to $a+b=v$, and so $d_{v \uparrow 3} = 3n+O(1)$. For Condition \eqref{cond:maxLdegree}, note that the only relevant case is $\ell=2, k=3$, and $\Delta_2(\mathscr{H}^{(3)}) = O(1)$. For Condition \eqref{cond:max1degree} the relevant case is $k=2$ and since edges of size 2 occur when we have $a+b=c$ and $a, b, c$ are not all distinct, we have $\Delta_1(\mathscr{H}^{(2)}) = O(1)$. For Condition \eqref{cond:Bbound}, consider a bad pair $v, v'$, so there are at least $\phi D = \Omega(n^{2/3})$ pairs $x, y$ such that both $xyv$ and $xyv'$ are Schur triples. For $xyv$ to be a Schur triple we need $v \in \{x+y, x-y, y-x\}$ and likewise we need $v'\in \{x+y, x-y, y-x\}$. Furthermore we assume $v \neq v'$ for a bad pair. In most cases, e.g.~when $v=x+y$ and $v'=x-y$, there are only $O(1)$ solutions for $x, y$. The only case where we could possibly get more solutions is when $v=x-y, v'=y-x$ which of course can only happen if $v'=-v$. So if $v' \neq -v$ then they have 2-codegree $O(1)$. Each $v$ is in exactly one bad pair $v, -v$, and $|B(v)| =1$, verifying Condition \eqref{cond:Bbound}. Finally, for Condition \eqref{cond:ddubprime} the relevant case is $|A|=1, b=3$. Since bad pairs are all of the form $x, -x$, any fixed vertex $v\neq 0$ can be in $O(1)$ Schur triples consisting of $v$ and a bad pair, verifying Condition \eqref{cond:ddubprime}.\\

As we showed above, the bad pairs in this hypergraph $\mathscr{H}$ are precisely the pairs $v, -v$. As we discussed earlier, bad pairs are highly correlated. We can see this phenomenon illustrated in the sum-free process. Indeed, there are $O(n)$ bad pairs, so if we run the process for $O(n^{1/2} \log^{1/2}n)$ steps, one would guess (based on the heuristic assumption that our independent set resembles a uniform random set) that our independent set contains $O(\log n)$ bad pairs $v, -v$. To the contrary, in \cite{B20} the first author proved that the number of bad pairs in the independent set grows like a power of $n$.

\subsubsection{The independent process on a blowup}\label{sec:blowup}

Let $\mathscr{H}$ be a $D$-regualar hypergraph on $N$ vertices, and assume it satisfies the assumptions of Theorem \ref{thm:bennettBohman}.  The \textit{$b$-blowup} of $\mathscr{H}$ is a hypergraph $\mathscr{H}'$ whose vertex set is $\{(v, i), v \in V(\mathscr{H}), 1 \le i \le b\}$, so we duplicate each vertex of $\mathscr{H}$ $b$ times. For each edge $\{v_1, \ldots, v_r\}$ in $\mathscr{H}$ there are $b^r$ edges $\{(v_1, i_1), \ldots, (v_r, i_r)\}$ in $\mathscr{H}'$ (each $i_j$ ranges from $1$ to $b$). Thus, $\mathscr{H}'$ is $r$-uniform on $N'=bN$ vertices, and $D'$-regular where $D'=b^{r-1}D$.\\

Let $\mathscr{H}'$ be the 2-blowup of $\mathscr{H}$ ($b=2$ here for simplicity). For $\mathscr{H}'$, we see that the bad pairs consist of duplicates $(v, 1), (v,2)$ of the same vertex from $\mathscr{H}$. In fact, the $(r-1)$-codegree of $(v, 1), (v,2)$ is exactly $D'$. If the independent process chooses $(v, 1)$ to go into the independent set, this immediately removes all edges containing $(v, 2)$. Thus $(v, 2)$ will remain open until it is inevitably also chosen to be in the independent set. Likewise, if $(v,1)$ gets closed then $(v, 2)$ gets closed simultaneously. Thus, the fate of $(v, 1)$ (i.e.~whether or not it is in our final independent set) is exactly the same as $(v, 2)$. This is the most extreme possible case illustrating the correlation between a bad pair of vertices.  We also mention this example because even if $\mathscr{H}$ satisfies the assumptions of Theorem \ref{thm:bennettBohman}, $\mathscr{H}'$ does not. However $\mathscr{H}'$ still satisfies the conditions of Theorem \ref{thm:ind}.\\

\subsection{Notation and conventions}
In this paper we use standard asymptotic notation $O(-), \Omega(-), \Theta(-)$. We use the notation $a\approx b$ to denote that $a$ is asymptotically equivalent to $b$. I.e., $\frac{a}{b} \to 1$  as $n \to \infty$.
 We also use expressions with the symbol ``$\pm$'' to represent intervals. For example if $b \ge 0$ then we may write $a \pm b$ to represent $[a-b, a+b]$. If in addition $c \ge b$ then we may write $a \pm b \subseteq a \pm c$.

\section{The good event}  
 Like in \cite{BennettBohmanNoteOnRandom}, our random greedy independent set should behave asymptotically like a binomial random set (except for the fact that a binomial random set may not be an independent set in $\cH$). 
Recall that we use the convention that, if we arrive at a step in the process $i$ such that $\mathcal{H}_i$ that has edges $e, e^{\prime}$ such that $e \subseteq e^{\prime}$ then we remove the larger edge $e^{\prime}$ from $\mathcal{H}$. This choice ends up having no impact on the process as the presence of $e$ ensures that we never have $e^{\prime} \subseteq I(j)$. Furthermore, we use the notation $\Delta X\coloneqq X(i+1)-X(i)$ to denote the one step change in the random variable $X$. As every expectation taken in this section is conditional on the first $i$ steps of the algorithm, we suppress the conditioning. That is, we simply write $\mathbb{E}[\cdot]$ instead of $\mathbb{E}\left[\cdot \mid \mathcal{F}_i\right]$ where $\mathcal{F}_0, \mathcal{F}_1, \ldots$ is the natural filtration generated by the algorithm.\\

We begin by providing intuition for the analysis to come. We will make some unjustified (and even not-quite-true) statements in this heuristic discussion, resuming full rigor in the next subsection (\ref{sec:goodevent}). Our analysis of the process will necessarily fall apart before our independent set contains very many vertices (compared to the total number of vertices $N$). The analysis falls apart for good reason: almost all the vertices will be closed by that point. The fate of an individual vertex is far more likely to be that it becomes closed than that it is chosen to be in the independent set. Thus, we are motivated to heuristically model the way vertices become closed. Of course, $v$ becomes closed when it is in an edge $e$ with $e \setminus \{v\}\subseteq I(i)$. We heuristically use a binomial random set to model $I(i)$.
Let $X_i$ be our binomial random set, where each vertex is in $X_i$, independently, with probability
\[
p=p_i\coloneqq \frac iN\eqpd
\]
This probability is chosen so that $\E[|X_i|] = i = |I(i)|$.
Of course, depending on one's purpose, using $X_i$ to model $I(i)$ may be a bad idea: in particular $X_i$ might contain edges of $\mathscr{H}$, while $I(i)$ of course does not. However, since the event that there exists an edge with {\em all but one vertex} in $X_i$ does not seem correlated with the event that an edge has {\em all vertices} in $X_i$, using this model to predict the closure of vertices is a bit more believable.  \\

Let $v$ be a fixed vertex. The expected number of witnesses to the vertex $v$ being closed, that is, the expected number of edges $e \in \mathcal{H}$ such that $v \in e$ and $e \backslash\{v\} \subseteq S_i$, is
\[
D p^{r-1}=D\left(\frac{i}{N}\right)^{r-1} \eqpd 
\]
To simulate our random greedy process by a continuous time process, we parameterize a Poisson random variable by this expectation. Thus we introduce ``scaled time'', letting $t_i^{r-1} \coloneqq D\left(\frac{i}{N}\right)^{r-1}$, so that 
\be\label{eq:t}
t_i  \coloneqq \frac{D^{\frac{1}{r-1}}\cdot i}{N} \eqpd
\ee
Going forward we let $t= t_i$ (suppressing the subscript) unless the step $i$ is noted otherwise. We will define a family of random variables that all depend on $i$, and claim that a.a.s.~ each such variable $Z=Z(i)$  stays close to an appropriate deterministic function $z=z(t)$. Justifying this claim will involve investigating how $Z$ changes over time, and for our approximation to hold we should have
\be \label{eq:changederivative}
\E[\Delta Z(i)] \approx \Delta z(t) \approx z'(t) \Delta t = z'(t) (t_{i+1}-t_i) = \frac{D^{\frac{1}{r-1}}}{N} z'(t) 
\ee

In continuizing the process, we think of the number of times a vertex $v$ is closed as being distributed according to the Poisson distribution. Thus, the probability that this quantity is zero for a fixed vertex (i.e.~the vertex is open) is given by  
\be \label{eq:q}
q=q(t)\coloneqq e^{-t^{r-1}}
\ee
which allows us to think of $q$ as the  probability a vertex is open in this continuous time setting. 

As our main goal is to prove dynamic concentration of $|V(i)|$, which is the number of vertices that remain in the hypergraph in step $i$, it will be necessary to track the following variables: For every vertex $v \in V(i)$ and $\ell=2, \ldots, r$, we define $d_{\ell}(v)=d_{\ell}(v,i)$ to be the number of edges of cardinality $\ell$ in $\mathcal{H}(i)$ that contain $v$.  
Treating $q$ as the probability that a vertex is in $V(i)$ (rather than selected to belong to the independent set) we should have $|V(i)| \approx$ $q(t) N$. Furthermore, if we heuristically assume that the distribution of the number of open vertices is binomially distributed with parameter $q$, $d_{\ell}(v) \coloneqq d_{\ell}(v,i)$ should follow the trajectory
\be\label{eq:s_ell}
s_{\ell}(t)\coloneqq D\binom{r-1}{\ell-1} q^{\ell-1} p^{r-\ell}=\binom{r-1}{\ell-1} D^{\frac{\ell-1}{r-1}} t^{r-\ell} q^{\ell-1} .
\ee

In what is to come, we separate the accumulated positive contributions to $d_{\ell}(v)$, $d^{+}_{\ell}(v)$, from the negative contributions, $d^{-}_{\ell}(v)$. We write $d_r(v)=D-d_r^{-}(v)$, and for $\ell<r$ we write $d_{\ell}(v) \coloneqq d_{\ell}(v,i )=d_{\ell}(v,0) +d_{\ell}^{+}(v,i)-d_{\ell}^{-}(v,i)$, where $d_{\ell}^{+}(v), d_{\ell}^{-}(v)$ are non-negative random variables which we use to track the number of edges of cardinality $\ell$ containing $v$ that are created and destroyed, respectively, through the first $i$ steps of the process. We define

\be \label{eq:sellpmdef}
s_{\ell}^{+}(t)\coloneqq D^{-\frac{1}{r-1}} \int_0^t \frac{\ell s_{\ell+1}(\tau)}{q(\tau)} d \tau \quad s_{\ell}^{-}(t)\coloneqq D^{-\frac{1}{r-1}} \int_0^t \frac{(\ell-1) s_{\ell}(\tau) s_2(\tau)}{q(\tau)} d \tau \eqcom
\ee
and claim that we should have $d_{\ell}^{ \pm} \approx s_{\ell}^{ \pm}$. Note that $s_{\ell}$ satisfies the differential equation
$$
s_{\ell}^{\prime}=\frac{\ell s_{\ell+1}-(\ell-1) s_{\ell} s_2}{q}=\left(s_{\ell}^{+}\right)^{\prime}-\left(s_{\ell}^{-}\right)^{\prime}
$$
and so $s_{\ell}=s_{\ell}^{+}-s_{\ell}^{-}$, (and thus we expect $d_{\ell} \approx s_{\ell}$).  
We claim that the choice of $s_{\ell}^{ \pm}$ is natural in light of the main contributions to $d_\ell$. Indeed, for each edge in $d_{\ell+1}(v)$ there are $\ell$ vertices that could be put into our independent set which would cause this edge to now be in $d_{\ell}(v)$, each one occurring with probability $1/|V(i)|$. Thus 
\[
\mathbb{E}\left[\Delta d_{\ell}^{+}\right] \approx \frac{1}{|V(i)|} \cdot \ell d_{\ell+1} \approx \frac{1}{Nq} \cdot \ell s_{\ell+1} = \frac{D^{\frac{1}{r-1}}}{N} (s_\ell^+)'\eqcom
\]
which is what we would expect (see \eqref{eq:changederivative}). For negative contributions we will see that the main way for edges in $d_{\ell}(v)$ to be destroyed is for their vertices (other than $v$) to get closed. Each such edge has $\ell-1$ vertices other than $v$ and the number of ways to close each $v'$ is $d_2(v') \approx s_2$, each occurring with probability $1/|V(i)|$. Thus
$$
\mathbb{E}\left[\Delta d_{\ell}^{-}\right] \approx \frac{1}{|V(i)|} \cdot(\ell-1) d_{\ell} d_2 \approx \frac{1}{Nq} \cdot(\ell-1) s_{\ell} s_2 = \frac{D^{\frac{1}{r-1}}}{N} (s_\ell^-)'.
$$

\subsection{The good event $\cE_i$}\label{sec:goodevent}

Here we define the {\em good event} $\cE_i$, which will stipulate that our random variables are bounded appropriately. First, we will define some additional random variables. 

\begin{definition}
    Let $M(i)$ denote the set of set of vertices in $v \in V(i)$ such that there exists a vertex $u \in I(i) \cap B(v)$.
\end{definition}

At first glance, the importance of $M(i)$ might not seem clear, however it it is precisely those vertices in $M(i)$ that we will end up having to handle separately in later calculations. If two vertices $u$ and $v$ have large $(r-1)$-codegree and one, say $v$, is chosen to be in the independent set, then by convention, every edge containing $u$ and a subset counted by the codegree is removed. Thus, we will use a more coarse bound when considering the degree of a vertex in $M(i)$ rather than carefully showing that the degree remains close to its continuous time trajectory. This ends up being the cost of being able to handle a small number of vertices with high codegree, which in turn is one of the central technical differences in the proofs of Theorem \ref{thm:ind} and Theorem \ref{thm:bennettBohman}.
\begin{definition}
    Let $d'_\ell(v, i)$ be the number of edges of size $\ell$ containing $v$ and some vertex $u \neq v$ with $u \in M(i)$. Let $d''_{\ell}(v, i)$ be the number of edges of size $\ell$ containing $v$ and a pair of vertices $u, w$ different from $v$ such that $u, w$ form a bad pair. 
\end{definition}

\begin{definition}[Degrees of Sets] \label{def:DegreesOfSetsi}For a set of vertices $A$, let $d_{A \uparrow b}(i)$ be the number of edges of size $b$ containing $A$ in $\mathscr{H}(i)$.
\end{definition}

\begin{definition}[Codegrees] \label{def:CoDegree}For a pair of distinct vertices $v, v^{\prime}$, let $c_{a, a^{\prime} \rightarrow k}\left(v, v^{\prime}, i\right)$ be the number of pairs of edges $e, e^{\prime}$, such that $v \in e \backslash e^{\prime}, v^{\prime} \in e^{\prime} \backslash e,|e|=a,\left|e^{\prime}\right|=a^{\prime}$ and $\left|e \cap e^{\prime}\right|=k$ and $e, e^{\prime} \in \mathscr{H}(i)$.
\end{definition}

Our analysis of the process will involve some constants $\zeta, \delta, \lambda$. For now we will say  
\be \label{eq:constants}
\delta:=\frac 1{10},\quad  \lambda := \frac{1}{100r},\quad 0 < \zeta \ll \lambda,
\ee
where by $\zeta \ll \lambda$ we mean that there exists an increasing function $f$ such that our argument is valid if we choose $\zeta < f(\lambda)$.
Define
\begin{align}
D_{a \uparrow b} & \coloneqq  D^{\frac{b-a}{r-1}} \phi^{1-(2r-2b) \lambda} \qquad \mbox{ for } 2 \le a < b \le r \label{eq:Darrowdef}\\
D_{1 \uparrow b} & \coloneqq  D^{\frac{b-1}{r-1}} \phi^{-(2r-2b) \lambda} \qquad \mbox{ for } 2 \le b \le r \label{eq:D1arrowdef}\\
C_{a, a^{\prime} \rightarrow k} & \coloneqq 2^r D^{\frac{a+a^{\prime}-k-2}{r-1}}\phi^{1-(4 r-2 k-2) \lambda} \qquad \mbox{ for } 2 \le a, a' < k \le r \label{eq:Carrowdef}\\
D'_{\ell} &\coloneqq D^{\frac{\ell-1}{r-1}}\phi^{1-(4r-2\ell) \lambda} \qquad \mbox{ for } 2 \le \ell \le r \label{eq:DPrime}\\
D''_{\ell} & \coloneqq  D^{\frac{\ell-1}{r-1}}\phi^{1-(2r-2\ell) \lambda} \qquad \mbox{ for } 2 \le \ell \le r \label{eq:DDoublePrime}
\end{align}

Note that the the power of $D$ in the equations above can be derived by a similar heuristic argument to \eqref{eq:s_ell}.
We will define
\be \label{eq:i_max}
i_{\max }\coloneqq \zeta N D^{-\frac{1}{r-1}} \log ^{\frac{1}{r-1}} (1/\phi) \quad \mbox{ and } \quad t_{\max }\coloneqq \frac{D^{\frac{1}{r-1}}}{N} \cdot i_{\max }=\zeta \log ^{\frac{1}{r-1}} (1/\phi). 
\ee

Note that
\be\label{eq:qtMax}
q\left(t_{\max }\right)=\phi^{\zeta^{r-1}} .
\ee

Let us define the \textit{good event} $\mathcal{E}_i$ to be the event that, in the $i$th step, the following statements all hold. Below, we use some deterministic functions $f_V$ and $f_2, \ldots, f_r$, which we call {\em error functions}. It is important that these functions satisfy certain inequalities which we will discuss in Appendix \ref{sec:errorfunctions} (see Lemma \ref{lem:deterministicbounds}). 
\be\label{eq:DynamicVBound}
|V(i)| \in N q \pm N \phi^\delta f_V\eqcom
\ee
\be\label{eq:DynamicDegBound}
d_{\ell}^{ \pm}(v, i) \in s_{\ell}^{ \pm} \pm D^{\frac{\ell-1}{r-1}} \phi^\delta f_{\ell} \quad \text { for } \ell=2, \ldots, r \text { and all } v \in V(i) \setminus M(i) \eqcom
\ee
\be \label{eq:DynamicSetDegBound}
d_{A \uparrow b}(i) \leq D_{a \uparrow b} \quad \text { for } 1 \leq a<b \leq r \text { and all } A \in\binom{V(i)}{a} \eqcom
\ee
\be\label{eq:DynamicCoDegBound}
c_{a, a^{\prime} \rightarrow k}\left(v, v^{\prime}, i \right) \leq C_{a, a^{\prime} \rightarrow k} \quad \text { for all non-bad pairs } v, v^{\prime} \in V(i)
\ee
\be\label{eq:Dynamicdprime}
d'_\ell(v, i) \leq D'_\ell \quad \text { for all } v \in V(i)\eqcom
\ee
\begin{center}
and
\end{center}
\be\label{eq:Dynamicddubprime}
d''_\ell(v, i) \leq D''_\ell \quad \text { for all } v\in V(i)\eqpd
\ee
.
In addition, let the stopping time $\tau$ be the minimum of $i_{\max}$ as defined above, and the first step, $i$, such that one of the conditions of the good event is violated.\\

We now state our main technical theorem, which may have further applications.
\begin{theorem}\label{thm:tech}
    $\cE_{i_{\max}}$ holds asymptotically almost surely.
\end{theorem}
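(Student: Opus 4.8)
The plan is to carry out the differential equation method in its now-standard self-correcting form. We show that, asymptotically almost surely, none of the constraints defining the good event is violated at any step $i\le i_{\max}$; this yields both $\tau=i_{\max}$ and $\cE_{i_{\max}}$. It suffices to prove that, conditional on $\cE_j$ holding for all $j<i$, every constraint among \eqref{eq:DynamicVBound}--\eqref{eq:Dynamicddubprime} still holds at step $i$, for all $i\le i_{\max}$, except on an event of probability $o(N^{-r})$; a union bound over the at most $O(N^{r-1})$ tracked random variables — the single variable $|V(i)|$, the $O(N)$ variables $d^{\pm}_\ell(v,i)$, $d'_\ell(v,i)$, $d''_\ell(v,i)$, the $O(N^{r-1})$ set degrees $d_{A\uparrow b}(i)$, and the $O(N^2)$ codegrees $c_{a,a'\to k}(v,v',i)$ — then finishes the proof, since for each variable the relevant maximal inequality treats ``for all $i\le i_{\max}$'' in one stroke. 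For a generic tracked variable $Z=Z(i)$ with deterministic trajectory $z=z(t)$ and error envelope $g(t)$ of the size prescribed by the corresponding line of \eqref{eq:DynamicVBound}--\eqref{eq:Dynamicddubprime}, we follow $Z$ up to the stopping time (which for the degree variables $d^{\pm}_\ell(v,\cdot)$ is also capped at the first step where $v$ joins $M(i)$) and bound it against the upper barrier $z+g$ and, for the two-sided constraints, also the lower barrier $z-g$.

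\noindent\textbf{Trend estimates.} On $\{\tau>i\}$ we compute the one-step conditional expectation $\mathbb{E}[\Delta Z\mid\mathcal{F}_i]$, whose dominant term was already identified heuristically in Section~2: for $d^+_\ell(v,i)$ it is the selection of one of the $\ell\,d_{\ell+1}(v,i)$ vertices whose removal shrinks an $(\ell+1)$-edge through $v$ to an $\ell$-edge (probability $1/|V(i)|$ each); for $d^-_\ell(v,i)$ it is the closure of one of the $\ell-1$ non-$v$ vertices of an $\ell$-edge through $v$; for $|V(i)|$ it is the closure of $v$; and the remaining variables are analogous. The substance of this step is to show, using the hypotheses of Theorem~\ref{thm:ind} together with the good event, that every secondary effect is negligible next to $g'$: the event that a single step touches an edge in two places is controlled by the spreadness-type Condition~\eqref{cond:maxLdegree}; the contribution of edges of size below $r$ is controlled by Condition~\eqref{cond:max1degree}; and the contribution of edges meeting $M(i)$ or containing a bad pair is controlled by Conditions~\eqref{cond:Bbound} and \eqref{cond:ddubprime} together with the good-event bounds \eqref{eq:Dynamicdprime}--\eqref{eq:Dynamicddubprime}. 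The conclusion is $\mathbb{E}[\Delta Z\mid\mathcal{F}_i]=\tfrac{D^{1/(r-1)}}{N}\bigl(z'(t)+\mathrm{error}\bigr)$, with the error much smaller than $g'(t)$ once $Z$ is near its barrier, as required by \eqref{eq:changederivative}.

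\noindent\textbf{Boundedness and concentration.} Boundedness comes from the set degrees: for $v\notin M(i)$, a single step changes $d_\ell(v,i)$ by at most $O\bigl(\Delta_2(\mathscr{H}^{(\ell)})+\Delta_2(\mathscr{H}^{(\ell+1)})\bigr)=O\bigl(\phi D^{(\ell-1)/(r-1)}\bigr)$, which is small against the envelope since $\delta<1$, and the other variables satisfy similar one-step bounds via Conditions~\eqref{cond:maxLdegree}--\eqref{cond:max1degree} and the good event. For the upper barrier we study the stopped process $Y(i):=Z-z-g$: the \emph{self-correcting} choice of the error functions forces the drift $g'$ to dominate the trend error of the previous paragraph once $Z$ is near $z+g$, so $\mathbb{E}[\Delta Y\mid\mathcal{F}_i]\le 0$ there and $Y$ is a supermartingale. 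A Freedman-type inequality, fed the conditional-variance bound $\mathbb{E}[(\Delta Z)^2\mid\mathcal{F}_i]\le(\text{one-step bound})\cdot\mathbb{E}[|\Delta Z|\mid\mathcal{F}_i]$, then gives $\Pr[\exists\,i\le i_{\max}:Y(i)>0]\le\exp(-\phi^{-\Omega(1)})$, which is $o(N^{-r})$ because $\phi\le\log^{-300r}N$. (Since $\phi$ is only polylogarithmically small in $N$, this tail is merely $\exp(-\mathrm{polylog}\,N)$; this is why Theorem~\ref{thm:ind} concludes only ``a.a.s.'', in contrast to the $1-\exp(-N^{\Omega(1)})$ of Theorem~\ref{thm:bennettBohman}.) A mirror-image submartingale handles the lower barrier for the two-sided constraints; the one-sided constraints \eqref{eq:DynamicSetDegBound}--\eqref{eq:Dynamicddubprime} need only the upper half. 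The differential inequalities the error functions $f_V,f_2,\dots,f_r$ must satisfy for the supermartingale property are exactly those collected in Lemma~\ref{lem:deterministicbounds} of Appendix~\ref{sec:errorfunctions}.

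\noindent\textbf{Main obstacle: the bad vertices.} The one genuinely new difficulty relative to \cite{BennettBohmanNoteOnRandom} is that boundedness for $d_\ell(v,i)$ fails at precisely the step where the bad partner of $v$ is selected: by convention \eqref{step3}, $d_r(v)$ can then drop by as much as $\phi D$, swamping its envelope $D\phi^\delta f_r$. This is why \eqref{eq:DynamicDegBound} is asserted only for $v\in V(i)\setminus M(i)$; once $v$ enters $M(i)$ we abandon its degree martingale and fall back on the crude bound \eqref{eq:DynamicSetDegBound} with $A=\{v\}$. Making this legitimate needs two things. First, $M(i)$ must stay negligible: each vertex $u$ placed in $I(i)$ adds at most $|B(u)|\le\phi D^{1/(r-1)}$ vertices to $M$, so $|M(i\wedge\tau)|\le i_{\max}\phi D^{1/(r-1)}=\zeta\phi N\log^{1/(r-1)}(1/\phi)=o(Nq(t))=o(|V(i)|)$ throughout, so discarding these vertices costs only a lower-order term in the estimate of $|V(i)|$. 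Second — the crux — for a vertex $v$ still being tracked, the edges through $v$ that additionally meet $M(i)$ or contain a bad pair must contribute only lower-order terms to both the trend and the variance for $v$; this is exactly what the auxiliary constraints \eqref{eq:Dynamicdprime}--\eqref{eq:Dynamicddubprime} provide, and their own maintenance (by the same supermartingale machinery) relies on Condition~\eqref{cond:ddubprime}. I expect this propagation — pushing the principle ``a bounded number of bad contributions is harmless'' through every expectation and variance computation, and in particular closing the martingale argument for \eqref{eq:Dynamicdprime}--\eqref{eq:Dynamicddubprime} themselves — to be the main technical obstacle; the rest is a long but essentially routine adaptation of the computations in \cite{bohmankeevashhfree, BennettBohmanNoteOnRandom}.
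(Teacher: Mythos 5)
Your outline matches the paper's proof: the paper likewise tracks $|V(i)|$ and $d_\ell^\pm(v,i)$ against the trajectories $Nq, s_\ell^\pm$ with growing error envelopes chosen (Lemma~\ref{lem:deterministicbounds}) to make $Z_V,Z_\ell^\pm$ supermartingales (and mirror submartingales) conditional on the good event, freezes the degree martingales at the first step $v$ enters $M(i)\cup I(i)$, falls back on the one-sided ``crude'' envelopes \eqref{eq:DynamicSetDegBound}--\eqref{eq:Dynamicddubprime} (proved by Freedman, using Conditions~\eqref{cond:maxLdegree}--\eqref{cond:ddubprime} exactly as you say), and closes with Hoeffding/Freedman maximal inequalities and a union bound, with the $\exp(-\phi^{-\Omega(1)})=o(N^{-r})$ tail you computed. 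The only cosmetic discrepancy is your phrase ``self-correcting ... once $Z$ is near $z+g$'': the paper's error functions are engineered so that the supermartingale drift is nonpositive whenever $\cE_i$ holds, not only near the barrier, but this does not change the argument.
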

Theorem \ref{thm:ind} follows from Theorem \ref{thm:tech}. Indeed, if $\cE_{i_{\max}}$ holds then by \eqref{eq:DynamicVBound} we have $V(i_{\max}) \ge Nq(t_{\max}) - N\phi^\delta f_V(t_{\max}),$ which is positive by Lemma \ref{lem:deterministicbounds} (7). Thus, there are open vertices remaining at step $i_{\max}$ and in particular the process lasts at least that many steps. We spend the next two sections proving Theorem \ref{thm:tech}. The proof proceeds by bounding the probability that $\cE_{i_{\max}}$ fails due to any of the possible conditions \eqref{eq:DynamicVBound}--\eqref{eq:Dynamicddubprime}. 

\section{Crude Bounds} In this section (and Appendix \ref{sec:deferred}) we bound the probability that $\cE_i$ occurs for $i<i_{\max}$ due to the failure of one of the Conditions \eqref{eq:DynamicSetDegBound}--\eqref{eq:Dynamicddubprime}. Here we handle \eqref{eq:DynamicSetDegBound} and \eqref{eq:Dynamicdprime}, and the rest are in Appendix \ref{sec:deferred} since the arguments are repetitive.\\

Using   \eqref{eq:DynamicVBound} and Lemma \ref{lem:deterministicbounds} part (8),  we have the following bound on $|V(i)|$ for $i \le i_{\max}$:

\be\label{eq:Vlowerbound}
\begin{split}
|V(i)| &\geq N\rbrac{q-\phi^{\delta}f_V} \ge N\phi^{\lambda}\eqpd
\end{split}
\ee

We begin by showing that condition \eqref{eq:DynamicSetDegBound} holds until step $i_{\max}$ a.a.s. First we handle the case where $a \ge 2$. 
\begin{lemma} Let $2 \leq a<b \leq r$, and recall that $D_{a \uparrow b}  \coloneqq D^{\frac{b-a}{r-1}} \phi^{1-(2r-2b) \lambda}.$ Then,
\label{lem:degreeBound}
\be\label{eq:setdegreeprobabilitybound}
\mathbb{P}\left(\exists i \leq \tau \text { and } A \in\binom{V(i)}{a} \text { such that } d_{A \uparrow b}(i) \geq D_{a \uparrow b}\right) =o(1) \eqpd
\ee
\end{lemma}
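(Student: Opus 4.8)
The plan is to prove the bound by a one-step supermartingale / exponential-decay argument on the random variable $d_{A\uparrow b}(i)$ for a fixed candidate set $A$ of size $a$, and then union bound over all $A$ and all $i\le i_{\max}$. The quantity $d_{A\uparrow b}(i)$ is non-increasing in $i$ (edges only get deleted or shrink in the process), so the only danger is that it starts too large or fails to decay fast enough. At $i=0$, Condition \eqref{cond:maxLdegree} of Theorem \ref{thm:ind} gives $d_{A\uparrow b}(0)\le \Delta_a(\mathscr{H}^{(b)})\le \phi D^{\frac{b-a}{r-1}}$, which is comfortably below $D_{a\uparrow b}=D^{\frac{b-a}{r-1}}\phi^{1-(2r-2b)\lambda}$ since $\lambda>0$ and $b\le r$ make the exponent $1-(2r-2b)\lambda < 1$. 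So the real content is to show that the decay of $d_{A\uparrow b}$ is at worst what the heuristic predicts, i.e.\ it cannot \emph{increase} — which it can't — so in fact the bound is essentially immediate once one knows the initial bound, \emph{except} that one must be careful: in the process an edge $e'\supseteq A$ of size $b$ can be replaced (via Step \eqref{step2}) by a smaller edge, and we need this not to create new size-$b$ edges through $A$. Since Step \eqref{step2} only ever \emph{removes} a vertex from an edge (making it smaller) or deletes the edge entirely, no new edge of size $b$ through $A$ is ever created, so $d_{A\uparrow b}(i)$ is genuinely monotone non-increasing.

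Given monotonicity, the argument reduces to: for each fixed $A\in\binom{V(0)}{a}$, $d_{A\uparrow b}(i)\le d_{A\uparrow b}(0)\le \phi D^{\frac{b-a}{r-1}} < D_{a\uparrow b}$ deterministically for all $i$. Hence the probability in \eqref{eq:setdegreeprobabilitybound} is actually zero, not just $o(1)$ — unless I am missing a subtlety. The likely subtlety is that $A$ is required to lie in $\binom{V(i)}{a}$, i.e.\ $A$ consists of currently-open vertices, but $A$ may not have been a set we "watched" from the start in the sense that its vertices could have had larger initial set-degree than $\Delta_a$ allows — no, that's still bounded by $\Delta_a(\mathscr{H}^{(b)})$. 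So I believe the honest statement is that this lemma holds deterministically by monotonicity plus Condition \eqref{cond:maxLdegree}; the "$o(1)$" in the statement is just a uniform way of phrasing it alongside the genuinely probabilistic lemmas. I would therefore write the proof as: (i) observe $d_{A\uparrow b}$ is non-increasing; (ii) bound $d_{A\uparrow b}(0)$ by $\Delta_a(\mathscr{H}^{(b)})\le \phi D^{\frac{b-a}{r-1}}$; (iii) check $\phi D^{\frac{b-a}{r-1}}\le D_{a\uparrow b}$, i.e.\ $\phi\le\phi^{1-(2r-2b)\lambda}$, which holds because $\phi\le\log^{-300r}N<1$ and the exponent $1-(2r-2b)\lambda\le 1$ with $2r-2b\ge 0$.

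\textbf{Where the real work is.} If the intended proof is \emph{not} purely deterministic — for instance if the authors want a bound that also controls the \emph{creation} of size-$b$ edges via the interaction with Step \eqref{step3} merges, or if edges of $\mathscr{H}^{(b')}$ for $b'>b$ can shrink down to exactly size $b$ through $A$ — then the argument must track, for each larger edge $e'$ with $A\subseteq e'$ and $|e'|>b$, the event that $e'$ shrinks to size $b$ while still containing $A$. For this one would set up $\E[\Delta d_{A\uparrow b}(i)]$: the positive part comes from size-$(b+1)$ edges through $A$ losing exactly one vertex (a vertex outside $A$ getting chosen into $I$), which happens with probability $\approx d_{A\uparrow b+1}/|V(i)| \le D_{a\uparrow b+1}/(N\phi^\lambda)$ per step using \eqref{eq:DynamicSetDegBound} for $b+1$ and \eqref{eq:Vlowerbound}; summing over $i\le i_{\max}$ gives a total expected increase of order $D_{a\uparrow b+1}\cdot i_{\max}/(N\phi^\lambda) = O(D^{\frac{b-a}{r-1}}\phi^{\,\cdot\,}\zeta\log^{1/(r-1)}(1/\phi))$, which is $o(D_{a\uparrow b})$ by the choice of constants. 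One then applies a standard Freedman-type martingale concentration inequality (as in \cite{BennettBohmanNoteOnRandom}) with a one-step boundedness estimate $|\Delta d_{A\uparrow b}| \le \Delta_{a+1}(\mathscr{H}^{(b+1)})\le \phi D^{\frac{b-a}{r-1}}$ from Condition \eqref{cond:maxLdegree}, and concludes the failure probability for a fixed $A$ is $\exp\{-N^{\Omega(1)}\}$; the union bound over $\binom{N}{a}\le N^a$ choices of $A$ and $i_{\max}\le N$ steps is then absorbed. The main obstacle in that version is making sure the boundedness constant and the drift are simultaneously small enough that Freedman's inequality beats the $N^a$ union bound — this is exactly the kind of bookkeeping with the $\lambda$-exponents in \eqref{eq:Darrowdef} that the constants in \eqref{eq:constants} are designed to make work, so it should go through, but it is the only place requiring care.
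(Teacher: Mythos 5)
Your primary argument contains a genuine error: $d_{A\uparrow b}(i)$ is \emph{not} monotone non-increasing. You assert that ``Step \eqref{step2} only ever removes a vertex from an edge (making it smaller) or deletes the edge entirely, so no new edge of size $b$ through $A$ is ever created.'' But that is precisely how new size-$b$ edges through $A$ \emph{are} created: when the chosen vertex $v$ lies in an edge $e$ with $|e|=b+1$, $A\subseteq e$, and $v\notin A$, Step \eqref{step2} deletes $v$ from $e$, producing an edge of size $b$ that still contains $A$. So $d_{A\uparrow b}(i)$ has a positive drift, namely $\E[\Delta d_{A\uparrow b}(i)]=(b+1-a)\,d_{A\uparrow b+1}(i)/|V(i)|$, and the ``deterministic'' proof via $d_{A\uparrow b}(i)\le d_{A\uparrow b}(0)\le\Delta_a(\mathscr{H}^{(b)})$ does not go through. (Notably, in your own ``Where the real work is'' paragraph you correctly identify exactly this mechanism — size-$(b+1)$ edges through $A$ losing a vertex — so the two halves of your writeup contradict each other.)

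The fallback sketch is essentially the paper's proof: define a frozen version of $d_{A\uparrow b}(i)$ minus a linear-in-$i$ drift term of size $\approx i\cdot D_{a\uparrow b+1}/(N\phi^{\lambda})$, check it is a supermartingale using the drift estimate above together with \eqref{eq:DynamicSetDegBound} (for size $b+1$) and \eqref{eq:Vlowerbound}, apply Freedman's inequality, and union-bound over $A$. The one additional correction you need there: the one-step boundedness constant should be $C=D_{a+1\uparrow b+1}=D^{\frac{b-a}{r-1}}\phi^{1-(2r-2b-2)\lambda}$ (the bound guaranteed inside the good event at step $i$), not $\Delta_{a+1}(\mathscr{H}^{(b+1)})\le\phi D^{\frac{b-a}{r-1}}$. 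The latter bounds only the \emph{initial} size-$(b+1)$ codegree; at step $i$ the hypergraph $\mathscr{H}(i)$ contains size-$(b+1)$ edges that have shrunk down from larger edges, and their count through an $(a{+}1)$-set can be as large as $D_{a+1\uparrow b+1}\gg\phi D^{\frac{b-a}{r-1}}$. Once you use the correct $C$, the variance and exponent bookkeeping matches the paper's: the failure probability is $\exp(-\phi^{-\Omega(\lambda)})=o(N^{-r})$, which beats the union bound over the $O(N^a)$ choices of $A$.
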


\begin{proof}

 We define variables $Y_{A \uparrow b}(i)$ as follows.

$$
Y_{A \uparrow b}(i)\coloneqq  \begin{cases}d_{A \uparrow b}(i)-\frac{r}{N}  D^{\frac{b-a+1}{r-1}} \phi^{1-(2 r-2 b-1) \lambda }\cdot i & \text { if } \mathcal{E}_{i-1} \text { holds } \\ Y_{A \uparrow b}(i-1) & \text { otherwise }\end{cases}
$$

We will bound $d_{A \uparrow b}(i)$ by appealing to the following lemma due to Freedman \cite{freedman}:

\begin{lemma}[Freedman]\label{lem:FreedmanLemma} Let $Y(i)$ be a supermartingale, with $\Delta Y(i) \leq C$ for all $i$, and 
\[
W(i)\coloneqq \sum_{k \leq i} \operatorname{Var}\left[\Delta Y(k) \mid \mathcal{F}_k\right].
\]
Then
$$
\mathbb{P}[\exists i: W(i) \leq w, Y(i)-Y(0) \geq d] \leq \exp \left(-\frac{d^2}{2(w+C d)}\right)\eqpd
$$
\end{lemma}

To show that $Y_{A \uparrow b}(i)$ is a supermartingale, it suffices to observe  
\[
\mathbb{E}\left[\Delta d_{A \uparrow b}(i) \right] = \frac{(b+1-a)d_{A \uparrow b+1}}{|V(i)|}\le \frac{r}{N}  D^{\frac{b-a+1}{r-1}} \phi^{1-(2 r-2 b-1) \lambda }.
\]

 For our application of Freedman's theorem \eqref{lem:FreedmanLemma}, we can set $C  =D_{a+1 \uparrow b+1} =D^{\frac{b-a}{r-1}}\phi^{1-(2 r-2 b-2) \lambda}$. We remind the reader that all expectations are conditioned on the history of the process (even though we omit this conditioning in our notation), and furthermore we will start using this convention on variance as well. With that in mind, we have
$$
\begin{aligned}
\mathbb{V} a r\left[\Delta Y_{A \uparrow b}(i)\right]=\mathbb{V} a r\left[\Delta d_{A \uparrow b}(i)\right] & \leq \mathbb{E}\left[(\Delta d_{A \uparrow b}(i))^2\right] \\
& \leq C\cdot \mathbb{E}\left[\left|\Delta d_{A \uparrow b}(i)\right| \right] \\
& \leq \frac{r}{N} D^{\frac{2 b-2 a+1}{r-1}} \phi^{2-(4 r-4 b-3) \lambda}.
\end{aligned}
$$

Using \eqref{eq:i_max} we have $W({i_{\max}}) \leq i_{\max} \cdot \frac{r}{N} D^{\frac{2 b-2 a+1}{r-1}} \phi^{2-(4 r-4 b-3) \lambda} \le D^{\frac{2 b-2 a}{r-1}} \phi^{2-(4 r-4 b-3) \lambda} \log(1/\phi),$ so we can take $w$ to be the latter expression. Setting $d=D^{\frac{b-a}{r-1}}\phi^{1-(2r-2b-1)\lambda} $ and noting that $Y_{A \uparrow b}(0)\le \Delta_a(\mathscr{H}^{(b)}) \le \phi D^{\frac{b-a}{r-1}}$ by Condition \eqref{cond:maxLdegree},  Freedman's theorem gives us that 

\begin{align}
\begin{split}
P\left[\exists i \le i_{\max} : Y_{A \uparrow b}(i) - \phi D^{\frac{b-a}{r-1}} \geq d \right]& \le \exp \left(-\frac{d^2}{2(w+C d)}\right)\\
&\le \exp \left(-\frac{D^{\frac{2(b-a)}{r-1}}\phi^{2-(4r-4b-2)\lambda}}{2\left(D^{\frac{2 b-2 a}{r-1}} \phi^{2-(4 r-4 b-3) \lambda} \log(1/\phi)+D^{\frac{2(b-a)}{r-1}}\phi^{2-(4r-4b-3)\lambda}\right)}\right) \\
& < \exp \rbrac{-\phi^{-\lambda/2}} = o(N^{-r}),
\end{split}
\end{align}
where on the last line we used that $\lambda = 1/100r$ and $\phi < \log^{-300r}N$. The probability above is small enough to beat a union bound over all $O(N^r)$ choices for the set $A$, and so a.a.s.~the above event does not happen for any $A$. In other words, $Y_{A \uparrow b}(i)-\phi D^{\frac{b-a}{r-1}} <d$ for all $i \le i_{\max}$. Thus we have 
\[
d_{A \uparrow b}(i) \le \phi D^{\frac{b-a}{r-1}}+ d + \frac{r}{N}  D^{\frac{b-a+1}{r-1}} \phi^{1-(2 r-2 b-1) \lambda } \cdot i_{\max} < D_{a \uparrow b}.
\]
\end{proof}
We now bound the probability that $\mathcal{E}_{i_{\text {max }}}$ fails due to condition \eqref{eq:Dynamicdprime}.

\begin{lemma} 
Recall that $D'_{\ell} \coloneqq  D^{\frac{\ell-1}{r-1}}\phi^{1-(4r-2\ell) \lambda}$. Then,
\be\label{eq:dprimeprobabilitybound}
\mathbb{P}\left(\exists i \leq \tau \text { and } v \in V(i) \text { such that } d'_{\ell}(v, i) \geq D'_{\ell}\right) =o(1) \eqpd
\ee
\end{lemma}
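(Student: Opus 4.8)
The plan is to follow the template of Lemma~\ref{lem:degreeBound}: for each fixed $\ell$ and each $v$, build a supermartingale from $d'_\ell(v,i)$, apply Freedman's inequality (Lemma~\ref{lem:FreedmanLemma}), and finish with a union bound over $v$ and $\ell$. We work up to $\tau$, so all of \eqref{eq:DynamicVBound}--\eqref{eq:Dynamicddubprime} are available, as is $|V(i)|\ge N\phi^{\lambda}$ from \eqref{eq:Vlowerbound}. There are exactly two ways $d'_\ell(v,i)$ can increase in a step: (a) an edge of size $\ell+1$ through $v$ already meeting $M(i)$ shrinks to size $\ell$, contributing at most $\ell\,d'_{\ell+1}(v,i)/|V(i)|$ in expectation (this term is absent when $\ell=r$); and (b) the chosen vertex $w=v_{i+1}$ lies in $B(u)$ for some $u\ne v$ in a size-$\ell$ edge through $v$, so that edge enters the count, contributing at most $|V(i)|^{-1}\sum_{u}\abrac{B(u)}\,d_{\{v,u\}\uparrow\ell}(i)\le (\ell-1)\phi D^{1/(r-1)}d_{v\uparrow\ell}(i)/|V(i)|$ by Condition~\eqref{cond:Bbound}. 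Using $d'_{\ell+1}(v,i)\le D'_{\ell+1}$ and $d_{v\uparrow\ell}(i)\le D_{1\uparrow\ell}$ (for $\ell<r$; a crude bound $d_{v\uparrow r}(i)=O(D)$ when $\ell=r$), we obtain $\E[\Delta d'_\ell(v,i)]\le g_\ell$ with $g_\ell=O\!\rbrac{\tfrac rN D^{\ell/(r-1)}\phi^{1-(4r-2\ell-1)\lambda}}$. Since $M(0)=\emptyset$ we have $d'_\ell(v,0)=0$, while $g_\ell\,i_{\max}=O\!\rbrac{\zeta\log^{1/(r-1)}(1/\phi)\,\phi^{\lambda}}\cdot D'_\ell=o(D'_\ell)$. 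So with $Y_{v,\ell}(i):=d'_\ell(v,i)-g_\ell\,i$ (frozen once $\mathcal{E}_{i-1}$ fails or $v$ leaves $V(i)$), which is a supermartingale with $Y_{v,\ell}(0)=0$, it suffices to bound the probability that $Y_{v,\ell}$ ever reaches $d:=\tfrac12 D'_\ell$ before step $i_{\max}$.

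For the one-step upward jump, Conditions~\eqref{eq:DynamicSetDegBound}--\eqref{eq:DynamicCoDegBound} give that an increase of type (a) is at most $D_{2\uparrow\ell+1}$ and of type (b) at most $\phi D^{1/(r-1)}D_{2\uparrow\ell}$, both of which are $\phi^{\Omega(\lambda)}D'_\ell$; so we may take $C:=D_{2\uparrow\ell+1}=\phi^{\Omega(\lambda)}D'_\ell$ in Freedman's inequality (with the obvious modifications when $\ell\in\{2,r\}$), and then $Cd=\phi^{\Omega(\lambda)}(D'_\ell)^{2}$.

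The real obstacle is the variance $W(i_{\max})=\sum_i\Var[\Delta Y_{v,\ell}(i)]$, for which the crude estimate $\Var[\Delta d'_\ell]\le C\,\E[\abrac{\Delta d'_\ell}]$ is too lossy: in the step where $w$ is chosen, every $2$-neighbour of $w$ lying in $M(i)$ is closed, so an edge through $v$ can lose its unique $M$-vertex and drop out of the count, and a single step can cost as many as $c_{\ell,2\to1}(v,w,i)$ edges -- which is already of order $D'_\ell$. I would instead bound the downward contribution directly by
\[
\E\!\sbrac{\rbrac{\Delta^{-}d'_\ell(v,i)}^{2}}\ \le\ \sum_{e,e'}\Pr\!\sbrac{e\text{ and }e'\text{ both leave the count in step }i+1},
\]
the sum over ordered pairs of size-$\ell$ edges through $v$ currently counted, split according to whether $e,e'$ share their $M$-vertex. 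On the diagonal one uses $\sum_u d_{\{v,u\}\uparrow\ell}(i)^{2}\le D_{2\uparrow\ell}\,(\ell-1)\,d'_\ell(v,i)\le D_{2\uparrow\ell}(\ell-1)D'_\ell$, a factor $D^{-1/(r-1)}$ below the naive count; off the diagonal (distinct $M$-vertices $u,u'$) one bounds $\Pr[w$ is a $2$-neighbour of both $u$ and $u']\le c_{2,2\to1}(u,u',i)/|V(i)|\le C_{2,2\to1}/(N\phi^{\lambda})$ by Condition~\eqref{eq:DynamicCoDegBound}, which is a large power of $1/\phi$ below the trivial bound $1$. In both cases one must peel off the rare pairs for which $(v,w)$, $(u,w)$, or $(u,u')$ is a bad pair: such leaves cost at most $d'_\ell(v,i)\le D'_\ell$ apiece and are limited in number by $\abrac{B(\cdot)}\le\phi D^{1/(r-1)}$ together with Condition~\eqref{eq:Dynamicddubprime}, so they contribute only a lower-order term. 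Combining this with the (routine) ``shrinking'' and upward contributions yields $W(i_{\max})\le \phi^{\Omega(\lambda)}(D'_\ell)^{2}$. I expect this second-moment bookkeeping -- extracting the right power of $1/\phi$ from the codegree condition while isolating the bad pairs -- to be the technical heart of the proof.

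Finally, Freedman's inequality with $d=\tfrac12 D'_\ell$ and $C$, $w=W(i_{\max})$ as above yields
\[
\Pr\!\sbrac{\exists\, i\le i_{\max}:\ Y_{v,\ell}(i)\ge d}\ \le\ \exp\!\rbrac{-\tfrac{d^{2}}{2(w+Cd)}}\ \le\ \exp\!\rbrac{-\phi^{-\Omega(\lambda)}}\ =\ \exp\!\rbrac{-\log^{\Omega(1)}N},
\]
using $\phi\le\log^{-300r}N$. This survives a union bound over the $O(N)$ choices of $v$ and the $O(1)$ choices of $\ell$. On the complementary event $d'_\ell(v,i)\le d+g_\ell\,i_{\max}<D'_\ell$ for every $i\le\tau$ with $v\in V(i)$, which is precisely the negation of the event in \eqref{eq:dprimeprobabilitybound}, completing the proof.
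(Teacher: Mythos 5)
Your proposal uses the same skeleton as the paper's: define a linear supermartingale from $d'_\ell(v,i)$ (frozen when the good event fails), apply Freedman's inequality, and union bound over $v$ and $\ell$. Two places where you diverge are worth flagging.

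\textbf{A missing positive contribution.} You assert there are ``exactly two ways'' $d'_\ell(v,i)$ can increase in a step. The paper identifies a third, and it is not subsumed by your (a) or (b): a size-$(\ell+1)$ edge $e$ through $v$ that does \emph{not} yet meet $M(i)$, but which contains a bad pair $u,u'$ with $u'\in B(u)$, where the chosen vertex is $u'$ itself. Then $e$ simultaneously shrinks to size $\ell$ and acquires the $M(i+1)$-vertex $u$. Your (a) requires the edge to already meet $M(i)$, and your (b) only reaches edges that already have size $\ell$, so neither covers this. The paper bounds this contribution by $\ell\,D''_{\ell+1}/|V(i)|$ using $d''_{\ell+1}(v,i)\le D''_{\ell+1}$ from Condition \eqref{eq:Dynamicddubprime} (this is precisely what that condition is for, as the discussion in Section~1 foreshadows). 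It turns out to be subdominant, so including it does not change $g_\ell$ up to constants, but your $g_\ell$ is not justified as an upper bound on $\E[\Delta d'_\ell]$ without it.

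\textbf{The variance.} Your observation that $d'_\ell$ can drop by a quantity of order $c_{\ell,2\to1}(v,w,i)\approx D'_\ell$ in a single step, so that $\sup|\Delta d'_\ell|\gg C$ and the crude inequality $\E[(\Delta d'_\ell)^2]\le C\,\E[|\Delta d'_\ell|]$ is not justified as written, is legitimate -- and it is not how the paper proceeds; the paper simply invokes the crude bound. However, you then aim for a much heavier fix than is needed. Since $d'_\ell(v,i)\le D'_\ell$ on $\cE_{i-1}$ (and $Y$ is frozen otherwise), one always has $|\Delta d'_\ell|\le D'_\ell$, and the cheap estimate $\Var[\Delta d'_\ell]\le D'_\ell\,\E[|\Delta d'_\ell|]$ gives $W(i_{\max}) = O\bigl(i_{\max}\, D'_\ell\, g_\ell\bigr)$. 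A short computation, using $i_{\max}/N=\zeta D^{-1/(r-1)}\log^{1/(r-1)}(1/\phi)$, $|V(i)|\ge N\phi^\lambda$, and $D\ge\phi^{-r}$, shows this is $\phi^{\Omega(1)}\cdot(D'_\ell)^2$, hence still dominated by $Cd=\tfrac12\phi^{(2r+2)\lambda}(D'_\ell)^2$ in the Freedman denominator. So the exponent is $-\Omega(d/C)=-\Omega(\phi^{-(2r+2)\lambda})$ regardless, and your diagonal/off-diagonal second-moment decomposition (with its own unaddressed dependence on Conditions \eqref{eq:DynamicCoDegBound} and \eqref{eq:Dynamicddubprime}) is unnecessary machinery. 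You have spotted a real wrinkle that the paper glosses over, but the right response is the one-line $\sup|\Delta|\le D'_\ell$ bound, not a new codegree bookkeeping scheme.

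In short: same strategy, but the expected-change estimate is incomplete without the third (subdominant) positive contribution from $d''_{\ell+1}$, and the variance analysis is both more complicated than needed and not actually carried through.
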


\begin{proof}

 We define variables $Y_\ell(v)$ as follows.

$$
Y_\ell(v, i)\coloneqq  \begin{cases}d'_\ell(v, i)-\frac{3r D^{\frac{\ell}{r-1}}\phi^{1-(4r-2\ell-1)\lambda}}{N }\cdot i & \text { if } \mathcal{E}_{i-1} \text { holds } \\ Y_\ell(v, i-1) & \text { otherwise }\end{cases}
$$

To show that $Y_\ell(v)$ is a supermartingale, we bound $\mathbb{E}\left[\Delta d'_\ell(v,i) \right]$. There are three possible types of positive contribution to $\Delta d'_\ell(v,i)$. The expected contribution from edges counted by $d'_{\ell+1}(v, i-1)$ having one of their vertices  chosen to be in our independent set is at most  
\be \label{eq:deltad'1}
\frac{\ell d'_{\ell+1}(v, i-1) }{ |V(i)|} \le \frac{r D^{\frac{\ell}{r-1}}\phi^{1-(4r-2\ell-2)\lambda}}{N \phi^\lambda} = \frac{r D^{\frac{\ell}{r-1}}\phi^{1-(4r-2\ell-1)\lambda}}{N }
\ee
The expected contribution from edges 
counted by $d_\ell(v)$ containing some vertex $u$ such that we choose $w \in B(u)$ in step $i$ is at most (recalling $|B(u)| \le D^{\frac 1{r-1}}\phi$)
\be \label{eq:deltad'2}
\frac{(\ell-1)d_{\ell}(v, i)  D^{\frac{1}{r-1}} \phi}{|V(i)|} \le \frac{r \left(s_\ell + 2D^\frac{\ell-1}{r-1}\phi^\delta f_\ell \right) D^{\frac{1}{r-1}} \phi}{N \phi^\lambda} \le \frac{r \phi^{1-\lambda}D^{\frac{\ell}{r-1}}\left(C_1+C_2\phi^{\delta-\lambda} \right)}{N}.
\ee
Here, the rightmost inequality follows, with the appropriate constants $C_1$ and $C_2$, from Lemma \ref{lem:deterministicbounds} parts (10) and (11). Letting $C^{\ast} \coloneqq r\cdot \max\{C_1, C_2\}$, we have the above is at most 

 \be
    \frac{C^{\ast} \cdot 
    \phi^{1-2\lambda}D^{\frac{\ell}{r-1}}}{N}= o\rbrac{ \frac{ D^{\frac{\ell}{r-1}}\phi^{1-(4r-2\ell-1)\lambda}}{N }}\eqcom 
    \ee
    where the last equality holds since we assume $r \geq 3$.

Finally, we bound the expected contribution from edges in $d''_{\ell+1}(v)$ containing two vertices $u,u'$ such that $u' \in B(u)$  and we choose $u'$ in step $i$. Thus the expected contribution from this case is at most 
\be \label{eq:deltad'3}
\frac{\ell  D''_{\ell+1}}{|V(i)|} \le  \frac{r D^{\frac{\ell}{r-1}}\phi^{1-(2r-2\ell-2)\lambda}}{N \phi^\lambda} = \frac{r D^{\frac{\ell}{r-1}}\phi^{1-(2r-2\ell-1)\lambda}}{N }= o
\left(\frac{D^{\frac{\ell}{r-1}}\phi^{1-(4r-2\ell-1)\lambda}}{N } \right).
\ee

Summing the three contributions \eqref{eq:deltad'1}, \eqref{eq:deltad'2}, \eqref{eq:deltad'3} we have 
\be \label{eq:deltad'}
\mathbb{E}\left[\Delta\left|d'_\ell(v)(i)\right| \right] \le \frac{3r D^{\frac{\ell}{r-1}}\phi^{1-(4r-2\ell-1)\lambda}}{N }.
\ee

Thus, $\mathbb{E}\left[\Delta\left|Y_\ell(v)(i)\right|\right] \leq 0$. For our application of Freedman's  \ref{lem:FreedmanLemma}, we will set $C  =2D^{\frac{\ell-1}{r-1}} \phi^{1-(2r-2\ell-2) \lambda}$, which we justify as follows. The contribution to $ \Delta d'_\ell(v)$ from edges of size $\ell+1$ is at most $D_{2 \uparrow \ell+1}=D^{\frac{\ell-1}{r-1}} \phi^{1-(2r-2\ell-2) \lambda}$, and the contribution from edges of size $\ell$ (which in this step go from $d_\ell(v)$ to $d'_\ell(v)$) is at most $D^{\frac{1}{r-1}} \phi D_{2 \uparrow \ell}=D^{\frac{\ell-1}{r-1}} \phi^{2-(2r-2\ell) \lambda}$. Thus our choice of $C$ is justified. Note that we have

$$
\begin{aligned}
\mathbb{V} a r\left[\Delta Y_\ell(v)\right]=\mathbb{V} a r\left[\Delta d'_\ell(v)\right] & \leq \mathbb{E}\left[(\Delta d'_\ell(v))^2\right] \\
& \leq C\cdot \mathbb{E}\left[\left|\Delta d'_\ell(v)\right| \right] \\
& \leq \frac{6r D^{\frac{2\ell-1}{r-1}}\phi^{2-(6r-4\ell-3)\lambda}}{N }
\end{aligned}
$$
where on the last line we used \eqref{eq:deltad'}. Thus we have 
\[
W({i_{\max}}) \leq i_{\max} \cdot\frac{6r D^{\frac{2\ell-1}{r-1}}\phi^{2-(6r-4\ell-3)\lambda}}{N }=O\left(D^{\frac{2\ell-2}{r-1}}\phi^{2-(6r-4\ell-3)\lambda} \log(1/\phi)\right)
\]
so we can take $w$ to be the latter expression. Setting $d=\frac12 D^{\frac{\ell-1}{r-1}} \phi^{1-(4r-2\ell)\lambda}$ and observing that $Y_\ell(v, 0)=0$, Freedman's theorem gives us that 

\begin{align}
\begin{split}
P\left[\exists i \le i_{\max} : Y_\ell(v, i) \geq d \right]& \le \exp \left(-\frac{d^2}{2(w+C d)}\right)\\
&=\exp \left( -\Omega \left(\frac{D^{\frac{2\ell-2}{r-1}} \phi^{2-(8r-4\ell)\lambda}}{D^{\frac{2\ell-2}{r-1}}\phi^{2-(6r-4\ell-3)\lambda} \log(1/\phi) + D^{\frac{\ell-1}{r-1}} \phi^{1-(2r-2\ell-2) \lambda} \cdot D^{\frac{\ell-1}{r-1}} \phi^{1-(4r-2\ell)\lambda}} \right)\right)\\
&=\exp \left( -\Omega \left(\frac{1}{\left(\phi^{(2r+3)\lambda}\log(1/\phi)+\phi^{(2r+2)\lambda}\right)} \right)\right)\\
&\le \exp \left(-\Omega\rbrac{\phi^{-(2r+2)\lambda}}\right) =o(N^{-1}).
\end{split}
\end{align}
If the unlikely event above does not happen, i.e. if $Y_\ell(v, i) <d$ for all $i \le i_{\max}$, then 
\[
d'_\ell(v, i) < d + \frac{3r D^{\frac{\ell}{r-1}}\phi^{1-(4r-2\ell-1)\lambda}}{N } \cdot i_{\max} < D'_{\ell}\eqcom
\]
where the final inequality holds by our choice of $\zeta$.
 Now \eqref{eq:dprimeprobabilitybound} follows from the union bound over at most $N$ choices for $v$.
\end{proof}

\section{Dynamic Concentration}
In this section we bound the probability that $\cE_i$ occurs for $i<i_{\max}$ due to the failure of one of the conditions \eqref{eq:DynamicVBound} or \eqref{eq:DynamicDegBound}. We begin by defining a set of random variables whose trajectories will need to be kept track of throughout the duration of the process. We'll go on then to prove that these variables are concentrated around their mean, giving us the ability to bound the length of the process of random greedy independent set process.
Consider the sequences
\be
\begin{aligned}\label{eq:DefinitionOfZ}
Z_V=Z_V(i) & \coloneqq  \begin{cases}|V(i)|-N q-N \phi^{\delta} f_V & \text { if } \mathcal{E}_{i-1} \text { holds } \\ Z_V(i-1) & \text { otherwise }\end{cases}& & \\
Z_{\ell}^{+}(v)=Z_{\ell}^{+}(v, i) & \coloneqq  \begin{cases}d_{\ell}^{+}(v)-s_{\ell}^{+}-D^{\frac{\ell-1}{r-1}}\phi^{\delta} f_{\ell} & \text { if } \mathcal{E}_{i-1} \text { holds and } v\notin M(i)\cup I(i) \\ Z_{\ell}^{+}(v, i-1) & \text { otherwise }\end{cases}& & \text { for } 2 \leq \ell \leq r-1 \\
Z_{\ell}^{-}(v)=Z_{\ell}^{-}(v, i) & \coloneqq \begin{cases}d_{\ell}^{-}(v)-s_{\ell}^{-}-D^{\frac{\ell-1}{r-1}}\phi^{\delta} f_{\ell}  & \text { if } \mathcal{E}_{i-1} \text { holds and } v\notin M(i)\cup I(i) \\ Z_{\ell}^{-}(v, i-1) & \text { otherwise }\end{cases}& & \text { for } 2 \leq \ell \leq r\\
\end{aligned}
\ee
Where we can think of $Z_V$ as the deviation of the size of the set of open vertices from its continuous time mean. Similarly, we think of $Z^{+}_{\ell}$ as being the deviation of the \textit{positive contribution} to the degree of a subset of size $\ell$ containing a fixed vertex $v$, from its mean, with $Z^{-}_{\ell}$ defined analogously.\\

In this section, we establish the upper bound on $V(i)$ in \eqref{eq:DynamicVBound} by showing that $Z_V<0$ for all $i \leq \tau$ a.a.s. Similarly, we establish the upper bounds on $d_{\ell}^{ \pm}(v)$ in \eqref{eq:DynamicDegBound} by showing that $Z_{\ell}^{ \pm}(v)<0$ for all $i \leq \tau$ a.a.s. We begin by showing that the sequences $Z_V$ and $Z_{\ell}^{ \pm}$ are supermartingales conditionally on the collection of error functions $\left\{f_V\right\} \cup\left\{f_{\ell} \mid \ell=2, \ldots, r\right\}$ having an ``appropriate'' value. 
We will see that each of our calculations imposes a condition (inequality) on the collection of error functions and their derivatives. These differential equations are the \textit{variation equations}, and we can choose error functions that satisfy the variation equations after completing the expected change calculations.
The functions will be chosen so that all error functions evaluate to 1 at $t=0$ and are increasing in $t$.\\

After choosing the error functions appropriately such that our random variable sequences defined in \eqref{eq:DefinitionOfZ} are indeed supermartingales, we use the fact that they have initial values that are negative and relatively large in magnitude. We complete the proof by applying martingale deviation inequalities to show that it is very unlikely for these supermartingales to ever be positive. Note that, to obtain the necessary lower bounds on $V(i)$ and $d_{\ell}^{ \pm}(v)$ respectively, we can apply an analogous argument to the sequences

\begin{align*}
\Zs_V=\Zs_V(i) & \coloneqq  \begin{cases}|V(i)|-N q+N \phi^{\delta} f_V & \text { if } \mathcal{E}_{i-1} \text { holds } \\ \Zs_V(i-1) & \text { otherwise }\end{cases}& & \\
\Zs_{\ell}^{+}(v)=\Zs_{\ell}^{+}(v, i) & \coloneqq  \begin{cases}d_{\ell}^{+}(v)-s_{\ell}^{+}+D^{\frac{\ell-1}{r-1}}\phi^{\delta} f_{\ell} & \text { if } \mathcal{E}_{i-1} \text { holds and } v\notin M(i)\cup I(i) \\ \Zs_{\ell}^{+}(v, i-1) & \text { otherwise }\end{cases}& & \text { for } 2 \leq \ell \leq r-1 \\
\Zs_{\ell}^{-}(v)=\Zs_{\ell}^{-}(v, i) & \coloneqq \begin{cases}d_{\ell}^{-}(v)-s_{\ell}^{-}+D^{\frac{\ell-1}{r-1}}\phi^{\delta} f_{\ell}  & \text { if } \mathcal{E}_{i-1} \text { holds and } v\notin M(i)\cup I(i) \\ \Zs_{\ell}^{-}(v, i-1) & \text { otherwise }\end{cases}& & \text { for } 2 \leq \ell \leq r\\
\end{align*}

and show that they are also, conditionally on the appropriate choice of error functions, submartingales. As the calculations are extremely similar, and show that the same variation equations suffice in both cases, we omit them (although some of the work we will show for the supermartingales in \eqref{eq:DefinitionOfZ} can be used as is for the submartingales).\\

We will now start showing that  $Z_V$ is a supermartingale, i.e. that $\E[\Delta Z_V(i)] \le 0$. Recall that we are always conditioning on the history of the process up to step $i$. Since we have $\Delta Z_V(i)=0$ if the event $\mc{E}_i$ does not hold (see \eqref{eq:DefinitionOfZ}), we can restrict our attention to the case that $\mc{E}_i$ holds. So we are conditioning on the history up to step $i$ and assuming that this history is such that $\mc{E}_i$ holds. We treat this first case in some detail in an effort to illuminate our methods for the reader who is not familiar with these techniques. Let $S\coloneqq N D^{-\frac{1}{r-1}}$ and recall (from \ref{eq:t}) that $t=i / S$. (The quantity $S$ is sometimes called the time scaling.) We write
\[
\Delta Z_V= \Delta |V(i)| -N\big(q\left(t+1 / S\big)-q(t)\right)-N \phi^{\delta}\big(f_V\left(t+1 / S\right)-f_V(t)\big),
\]
and make use of the estimates
\[
\begin{gathered}\label{eq:taylor}
q\left(t+1 / S\right)-q(t)=\frac{q^{\prime}(t)}{S}+O\left(\frac{q^{\prime \prime}}{S^2}\right) \\
f_V\left(t+1 / S\right)-f_V(t)=\frac{f_V^{\prime}(t)}{S}+O\left(\frac{f_V^{\prime \prime}}{S^2}\right)
\end{gathered}
\]
where, in the big-O terms, by $q^{\prime \prime}$ and $f_V^{\prime \prime}$ we mean bounds on these second derivatives that hold uniformly in the interval of interest, i.e.~$t \in [0, t_{max}]$. Thus, using $q^{\prime}=-s_2 D^{-\frac{1}{r-1}}$ we obtain
\begin{align}
    \Delta Z_V&= \Delta |V(i)| + s_2 - D^\frac{1}{r-1} \phi^\delta f_V^{\prime}+O\left(D^\frac{2}{r-1} N^{-1} (q^{\prime \prime}+ \phi^\delta f_V^{\prime \prime})\right) \nonumber\\
    &= \Delta |V(i)| + s_2 - D^\frac{1}{r-1} \phi^\delta f_V^{\prime}+O\left(D^\frac{2}{r-1} N^{-1} \phi^{-\lambda}\right), \label{eq:deltaZV}
\end{align}

When we take expectations of both sides in \eqref{eq:deltaZV}, we will see that the main terms of $\E[\Delta|V(i)|]$ and $N\left(q\left(t+1 / S_t\right)-q(t)\right)$ cancel exactly. The second order terms from $\E[\Delta|V(i)|]$ will be then be balanced by the $N \phi^{\delta}\left(f_V\left(t+1 / S_t\right)-f_V(t)\right)$ term. We now proceed with the explicit details.

\subsection{Showing $Z_V$ is a supermartingale}

We estimate $\mathbb{E}[\Delta |V(i)|]$.  We have (explanation follows)
\begin{align}
\mathbb{E}\left[\Delta |V(i)|\right] & =
-1-\frac{1}{|V(i)|}\sum_{v \in V(i)} d_2(v)  \nn \\
&\subseteq -1-\frac{1}{|V(i)|}\left[|V(i) \setminus M(i)|\left(s_2 \pm 2D^\frac{1}{r-1} \phi^\delta f_2\right) +O(| M(i)| \cdot D_{1 \uparrow 2})\right] \nn\\
 &\subseteq -\frac{|V(i)|+O(N \phi^{1-\lambda})}{|V(i)|}\left(s_2 \pm 2D^\frac{1}{r-1} \phi^\delta f_2\right)+O\left(D^{\frac{1}{r-1}} \phi^{1-(2r-2) \lambda} \right) \nn\\
 &\subseteq- s_2 \pm 2D^\frac{1}{r-1} \phi^\delta f_2 + O\left(D^{\frac{1}{r-1}} \phi^{1-(2r-2) \lambda}\right).\label{eq:deltaVbound}
\end{align}
Notice that we treat differently the vertices in $v \in M(i)$, only using rough estimates on the degree of subsets of size two containing those vertices. Indeed, on the second line we have used \eqref{eq:DynamicDegBound}  and \eqref{eq:DynamicSetDegBound}. On the third line we have used \eqref{eq:Vlowerbound} and the following: since $|B(v)| \le D^\frac{1}{r-1} \phi$ for all $v$, we have
\begin{equation}
    \label{eq:Mbound}
|M(i)| \le i_{\max} \cdot D^\frac{1}{r-1} \phi <N \phi^{1-\lambda}.
\end{equation}
Finally, to get \eqref{eq:deltaVbound} we use \eqref{eq:Vlowerbound} and Lemma \ref{lem:deterministicbounds} parts (10) and (11) which imply 
\[
 s_2 + 2D^\frac{1}{r-1} \phi^\delta f_2 = O\left( D^\frac{1}{r-1} \phi^{-\lambda}\right).
\]

This together with the fact that $r\geq 3$ yields
\[
\frac{O\left(N\phi^{1-\lambda}\right)}{|V(i)|}\left(s_2 \pm 2D^\frac{1}{r-1} \phi^\delta f_2\right) = O\left( D^\frac{1}{r-1} \phi^{1-3\lambda}\right) = O\left(D^{\frac{1}{r-1}} \phi^{1-(2r-2) \lambda}\right),
\]
justifying the big-O term in \eqref{eq:deltaVbound}.\\

Now, taking the expectation of \eqref{eq:deltaZV} and using \eqref{eq:deltaVbound}, we obtain 
\be \label{eq:variationZV}
\mathbb{E}\left[\Delta Z_V\right]\le D^\frac{1}{r-1} \phi^\delta (2f_2-f_V') + O\left(D^{\frac{1}{r-1}} \phi^{1-(2r-2) \lambda}\right),
\ee
since the big-O term in \eqref{eq:deltaVbound} dominates the big-O term in \eqref{eq:deltaZV} by \eqref{eq:LowerBoundonN}. Now the right hand side of \eqref{eq:variationZV} is negative since $\phi^\delta$ is much larger than $\phi^{1-(2r-2)\lambda}$ and $2f_2 - f_V' = -\Omega(1)$ by Lemma \ref{lem:deterministicbounds} part (1). Therefore $Z_V$ is a supermartingale. \\

To check that $\Zs_V$ is a submartingale, one can run through the derivation of \eqref{eq:variationZV} mutatis mutandis, using $\Zs_V$ instead of $Z_V$ and lower bounds instead of upper bounds, to obtain
\[
\mathbb{E}\left[\Delta \Zs_V\right]\ge -D^\frac{1}{r-1} \phi^\delta (2f_2-f_V) + O\left(D^{\frac{1}{r-1}} \phi^{1-(2r-2) \lambda}\right) \ge 0.
\]

\subsection{Showing $Z_{\ell}^{+}$ is a supermartingale} In several ways this will be similar to what we just did for $Z_V$. In particular, in the definition \eqref{eq:DefinitionOfZ}, we have a case where $\Delta Z_\ell^+ = 0$ and so we can restrict our attention to the other case. Going forward it will be important to recall that $\Delta Z_{\ell}^{+} \coloneqq \Delta Z_{\ell}^{+}(i) = Z_{\ell}^{+}(i+1) - Z_{\ell}^{+}(i)$, i.e., we are considering the change in  $Z_{\ell}^{+}$ from step $i$ to $i+1$. In doing so, we assume we are in the good event $\mc{E}_i$, and when we estimate $\E[\Delta d_\ell^+]$ we ignore any possible contribution coming from the possibility that the $i^{th}$ vertex to be chosen, $v_i$, is such that $v_i \in B(v)$ (since in that case we would have $v \in M(i+1)$ and thus $\Delta Z_\ell^+ = 0$). We have 
\be \label{eq:DZL1}
\mathbb{E}\left[\Delta Z_{\ell}^{+}(v)\right] = \frac{1}{|V(i)|} \sum_{u \in V(i)} x_u
\ee
where $x_u$ is the change in $Z_{\ell}^{+}(v)$ if $v_i=u$. By definition \eqref{eq:DefinitionOfZ}, $x_u=0$ if $u \in B(v) \cup \{v\}$ since then $v \in M(i+1) \cup I(i+1)$. We write (explanation follows) 
\begin{align}
 \mathbb{E}\left[\Delta Z_{\ell}^{+}(v)\right] =& \frac{1}{|V(i)|} \sum_{u \in V(i) \setminus B(v) \setminus \{v\}}\rbrac{ d_{\{v, u\} \uparrow \ell+1}(i) -   \frac{D^\frac{1}{r-1}}{N} (s_\ell^+)'  - \frac{D^{\frac{\ell}{r-1}} \phi^{\delta}}{N} f_{\ell}^{\prime} +O\left(D^\frac{2}{r-1} N^{-2} ((s_\ell^+)^{\prime \prime}+ D^\frac{\ell-1}{r-1}\phi^\delta f_\ell^{\prime \prime})\right)} \nn\\
=& \frac{\ell d_{\ell+1}(v)}{|V(i)|} -  \frac{D^\frac{1}{r-1}}{N} (s_\ell^+)'  - \frac{D^{\frac{\ell}{r-1}} \phi^{\delta}}{N} f_{\ell}^{\prime} +O\left( D^\frac{\ell}{r-1}N^{-1} \phi^{2-(2r-2\ell-1)\lambda}+ D^\frac{\ell+1}{r-1}N^{-2} \phi^{-\lambda} \right)\label{eq:ZLline1}\\
\le & \frac{\ell (s_{\ell+1} + 2D^{\frac{\ell}{r-1}}\phi^{\delta} f_{\ell+1})}{Nq - N\phi^\delta f_V} -  \frac{\ell s_{\ell+1}}{Nq}  - \frac{D^{\frac{\ell}{r-1}} \phi^{\delta}}{N} f_{\ell}^{\prime} +O\left( D^\frac{\ell}{r-1}N^{-1} \phi^{1-\lambda}\right)\label{eq:ZLline2}\\
\le & \frac{D^{\frac{\ell}{r-1}} \phi^\delta}{N} \cdot\left(2 \ell q^{-1} f_{\ell+1}+\ell\binom{r-1}{\ell} t^{r-\ell-1} q^{\ell-2} f_V-f_{\ell}^{\prime}\right) +O\rbrac{D^{\frac{\ell}{r-1}}N^{-1} \phi^{2\delta - 5 \lambda}}\label{eq:ZLline3}.
\end{align}
To justify line \eqref{eq:ZLline1}, first we estimate 
\begin{align*}
 \frac{1}{|V(i)|} \sum_{u \in V(i) \setminus B(v) \setminus \{v\}} d_{\{v, u\} \uparrow \ell+1}(i) &= \frac{1}{|V(i)|}\sum_{u \in V(i) \setminus  \{v\}} d_{\{v, u\} \uparrow \ell+1}(i) + O\left(\frac{|B(v)|D_{2 \uparrow \ell+1}}{|V(i)|} \right)\\
 &= \frac{\ell d_{\ell+1}(v)}{|V(i)|} + O\left( D^\frac{\ell}{r-1}N^{-1} \phi^{2-(2r-2\ell-1)\lambda} \right).
\end{align*}

To finish justifying \eqref{eq:ZLline1}, we use Lemma \ref{lem:deterministicbounds} to bound the error terms:
\be\label{eq:slprimebound}
\frac{|B(v) \cup \{v\}|}{|V(i)|} \left( \frac{D^\frac{1}{r-1}}{N} (s_\ell^+)'  + \frac{D^{\frac{\ell}{r-1}} \phi^{\delta}}{N} f_{\ell}^{\prime} \right) +O\left(D^\frac{2}{r-1} N^{-2} ((s_\ell^+)^{\prime \prime}+ D^\frac{\ell-1}{r-1}\phi^\delta f_\ell^{\prime \prime})\right)= O\left( D^\frac{\ell+1}{r-1}N^{-2} \phi^{-\lambda} \right).
\ee
To justify line \eqref{eq:ZLline2}, we use our bounds on $d_{\ell+1}(v), |V(i)|$ and recall by \eqref{eq:sellpmdef} that $(s_{\ell}^+)' = D^{-\frac{1}{r-1}} \ell s_{\ell+1} / q$. The big-O term on line \eqref{eq:ZLline2} is just simplifying the previous big-O term using \eqref{eq:LowerBoundonN}. To justify line \eqref{eq:ZLline3} we will use the following lemma:
\begin{lemma} \label{lem:est}
For any real numbers $x, y, \e_x, \e_y$, if we have $x,y \neq 0$ and $\left|\frac{\e_x}{x}\right|, \left|\frac{\e_y}{y}\right|\le \frac{1}{2}$, then

$$\frac{x+ \e_x}{y+\e_y} - \frac{x}{y} = \frac{ \e_x}{y} - \frac{x \e_y}{y^2} + O\left(\frac{y \e_x \e_y + x \e_y^2}{y^3} \right)$$
\end {lemma}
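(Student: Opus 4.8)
The statement to prove is Lemma \ref{lem:est}, an elementary estimate comparing $\frac{x+\e_x}{y+\e_y}$ with $\frac{x}{y}$ under the hypotheses $x,y\neq 0$ and $|\e_x/x|,|\e_y/y|\le \frac12$. This is a purely algebraic fact, so the plan is to reduce everything to a single fraction and expand the term $\frac{1}{y+\e_y}$ as a geometric-type series in $\e_y/y$ with controlled remainder.

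\textbf{The approach.} First I would write
\[
\frac{x+\e_x}{y+\e_y} - \frac{x}{y} = \frac{(x+\e_x)y - x(y+\e_y)}{y(y+\e_y)} = \frac{y\e_x - x\e_y}{y^2(1+\e_y/y)}.
\]
Now set $u := \e_y/y$, so $|u|\le \tfrac12$, and use the identity $\frac{1}{1+u} = 1 - u + \frac{u^2}{1+u}$, which is exact. Substituting gives
\[
\frac{y\e_x - x\e_y}{y^2}\left(1 - \frac{\e_y}{y} + \frac{\e_y^2/y^2}{1+\e_y/y}\right)
= \frac{\e_x}{y} - \frac{x\e_y}{y^2} - \frac{\e_x\e_y}{y^2} + \frac{x\e_y^2}{y^3} + (y\e_x - x\e_y)\cdot\frac{\e_y^2/y^2}{y^2(1+\e_y/y)}.
\]
The first two terms are exactly the claimed main terms $\frac{\e_x}{y} - \frac{x\e_y}{y^2}$, and it remains to check that the remaining three terms are each $O\!\left(\frac{y\e_x\e_y + x\e_y^2}{y^3}\right)$.

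\textbf{Bounding the error.} The term $-\frac{\e_x\e_y}{y^2} = -\frac{y\e_x\e_y}{y^3}$ and $\frac{x\e_y^2}{y^3}$ are literally of the stated form (up to sign), so nothing to do there. For the last term, since $|\e_y/y|\le\tfrac12$ we have $|1+\e_y/y|\ge \tfrac12$, hence $\left|\frac{\e_y^2/y^2}{y^2(1+\e_y/y)}\right| \le \frac{2\e_y^2}{y^4}$, and therefore
\[
\left|(y\e_x - x\e_y)\cdot\frac{\e_y^2/y^2}{y^2(1+\e_y/y)}\right| \le \frac{2|y\e_x|\,\e_y^2 + 2|x\e_y|\,\e_y^2}{y^4} = \frac{2\e_y^2}{y^2}\cdot\frac{|y\e_x| + |x\e_y|}{y^2}.
\]
Using $|\e_y/y|\le\tfrac12$ once more, the prefactor $\frac{2\e_y^2}{y^2}\le \tfrac12$, so this is $O\!\left(\frac{|y\e_x| + |x\e_y|}{y^2}\right)$; but since $|\e_y/y|\le 1$ we also have $\frac{|y\e_x|}{y^2} \le \frac{|y\e_x\e_y|}{y^3}\cdot\frac{1}{|\e_y/y|}$... so I should instead just absorb a factor of $|\e_y/y|$ into the numerator. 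Cleanly: $\frac{2\e_y^2}{y^2}\cdot\frac{|y\e_x|+|x\e_y|}{y^2} = 2\cdot\frac{|y\e_x|\e_y^2 + |x\e_y|\e_y^2}{y^4} \le 2\cdot\frac{|y\e_x\e_y| + |x\e_y^2|}{y^3}\cdot\frac{|\e_y|}{|y|} \le \frac{|y\e_x\e_y| + x\e_y^2}{y^3}$ in absolute value, which is exactly $O\!\left(\frac{y\e_x\e_y + x\e_y^2}{y^3}\right)$. Collecting the three error contributions finishes the proof.

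\textbf{Main obstacle.} There is no real obstacle — this is a routine Taylor-type expansion with an explicit (non-asymptotic) remainder. The only thing requiring a small amount of care is to keep the implied constants genuinely absolute (independent of $x,y,\e_x,\e_y$), which is why I use the exact identity $\frac{1}{1+u}=1-u+\frac{u^2}{1+u}$ together with the bound $|1+u|\ge\frac12$ rather than an informal "$O(u^2)$'' expansion; and to present the remainder in precisely the grouped form $\frac{y\e_x\e_y+x\e_y^2}{y^3}$ that the applications (e.g. line \eqref{eq:ZLline3}) consume. One should also note the hypotheses $x,y\neq 0$ are used only to make the divisions meaningful; the estimate degrades gracefully and the constant hidden in $O(\cdot)$ can be taken to be an absolute constant such as $2$.
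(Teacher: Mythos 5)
Your proof is correct. The paper states Lemma \ref{lem:est} without proof, treating it as a routine algebraic estimate, so there is no ``paper's argument'' to compare against; but your derivation is sound. The reduction to a single fraction and the use of the exact identity $\frac{1}{1+u}=1-u+\frac{u^2}{1+u}$ with $|1+u|\ge\frac12$ is exactly the right device for getting an explicit constant rather than a vague asymptotic, and the subsequent bookkeeping closes cleanly.

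Two small remarks. First, the big-$O$ in the statement should be read as $O\!\left(\frac{|y\e_x\e_y|+|x\e_y^2|}{|y|^3}\right)$ with each term taken in absolute value; as written, $y\e_x\e_y$ and $x\e_y^2$ can have opposite signs (indeed in the paper's application $\e_y<0$) and the literal numerator could cancel. You implicitly adopt the absolute-value reading, which is the intended and correct one, but it is worth stating explicitly. Second, the ``Bounding the error'' paragraph contains some visible mid-course correction (``so this is $O(\cdots)$; but \dots so I should instead \dots''); the final inequality you land on,
\[
\frac{2|y\e_x|\e_y^2+2|x\e_y|\e_y^2}{y^4}\le \frac{|y\e_x\e_y|+|x\e_y^2|}{|y|^3},
\]
is correct (using $|\e_y/y|\le\tfrac12$ once), but you should delete the abandoned first attempt and also put absolute-value bars on the $x\e_y^2$ term so the inequality reads cleanly.
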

With this lemma, we turn to \eqref{eq:ZLline2} and see that 
\begin{align*}
\frac{\ell (s_{\ell+1} + 2D^{\frac{\ell}{r-1}}\phi^{\delta} f_{\ell+1})}{Nq - N\phi^\delta f_V} -  \frac{\ell s_{\ell+1}}{Nq} &= \frac{2\ell D^{\frac{\ell}{r-1}}\phi^{\delta} f_{\ell+1}}{Nq} + \frac{\ell s_{\ell+1} N\phi^\delta f_V}{N^2q^2} + O\rbrac{\frac{N^2q D^{\frac{\ell}{r-1}}\phi^{\delta} f_{\ell+1}\phi^\delta f_V + s_{\ell+1}N^2\phi^{2\delta} f_V^2}{N^3q^3}}\\
& = \frac{2\ell D^{\frac{\ell}{r-1}}\phi^{\delta} f_{\ell+1}}{Nq} + \frac{\ell s_{\ell+1} N\phi^\delta f_V}{N^2q^2} + O\rbrac{D^{\frac{\ell}{r-1}}N^{-1} \phi^{2\delta - 5 \lambda}}
\end{align*}
where on the second line we have used Lemma \ref{lem:deterministicbounds}. This justifies \eqref{eq:ZLline3}. 

Now observe that \eqref{eq:ZLline3} is negative since $\phi^\delta$ is much larger than $\phi^{2\delta - 5 \lambda}$ and \[2 \ell q^{-1} f_{\ell+1}+\ell\binom{r-1}{\ell} t^{r-\ell-1} q^{\ell-2} f_V-f_{\ell}^{\prime} = -\Omega(1)\]
by Lemma \ref{lem:deterministicbounds} parts (2) and (3). Therefore $Z_\ell^+$ is a supermartingale. Similarly, $\Zs_\ell^+$ is a submartingale.

\subsection{Showing $Z_{\ell}^{-}$ is a supermartingale}

 The main term in the expected change of $d_{\ell}^{-}(v)$ comes from the selection of vertices $y$ for which there exists a vertex $x$ such that $\{y, x\} \in \mathscr{H}(i)$, and there is an edge $e$ counted by $d_{\ell}(v)$ such that $x \in e$. However, here we must also account for the convention of removing redundant edges. For a fixed edge $e$ counted by $d_{\ell}(v)$, the selection of any vertex in the following set results in the removal of $e$ from this count:
$$
\{y \in V(i): \exists A \subset e \text { such that } A \neq\{v\} \text { but } A \cup\{y\} \in \mathscr{H}(i)\}
$$

(Note that this is essentially restating the convention that we remove edges that contain other edges). Together, the sums
$$
\sum_{x \in e \backslash\{v\}} d_2(x)+\sum_{A \subseteq e,|A| \geq 2} d_{A \uparrow|A|+1}
$$

count each $y$ with the property that the choice of $y$ causes the removal of $e$ from the count $d_{\ell}(v)$ at least once and at most $O(1)$ many times. The number of $y$ that are counted more than once in the first sum is at most $\binom{\ell-1}{2} C_{2,2 \rightarrow 1}$. \\

Let $y_u$ be the change in $Z_{\ell}^{-}(v)$ if $v_i=u$. By definition \eqref{eq:DefinitionOfZ}, we have
\[
y_u = \begin{cases}
    0 & \mbox{ if } u \in B(v) \cup \{v\}\\
    \Delta \Big[ d_{\ell}^-(v | u) -s_\ell^- - D^\frac{\ell-1}{r-1} \phi^\delta f_\ell \Big] & \mbox{ otherwise}
\end{cases}
\]
where $\Delta d_{\ell}^-(v | u)$ above is interpreted as what the change in $d_{\ell}^-(v)$ would be if we chose $v_i=u$. With this in mind we have
\[
    \Delta d_{\ell}^-(v | u)  = d_{\{u, v\} \uparrow \ell} + \sum_{k=1}^{\ell-1} c_{\ell, k+1 \rightarrow k}(v, u).
\]
Indeed, the first term accounts for all the edges containing both $u$ and $v$, and the sum accounts for the edges in $d_\ell(v)$ which get removed due to the fact that they would contain smaller edges after choosing $v_i=u$.
Given this, we have that $\mathbb{E}\left[\Delta Z_{\ell}^{-}(v)\right]$ is equal to
\begin{align}
   &  \frac{1}{|V(i)|} \sum_{u \in V(i)} y_u   = \frac{1}{|V(i)|} \sum_{u \in V(i) \setminus B(v) \setminus \{v\}} \Delta \Big[ d_{\ell}^-(v | u) -s_\ell^- - D^\frac{\ell-1}{r-1} \phi^\delta f_\ell \Big] \nn \\
   = &\frac{1}{|V(i)|} \sum_{u \in V(i) \setminus B(v) \setminus \{v\}} \sbrac{ d_{\{u, v\} \uparrow \ell} + \sum_{k=1}^{\ell-1} c_{\ell, k+1 \rightarrow k}(v, u) -\frac{D^\frac{1}{r-1}}{N}\rbrac{ (s_\ell^-)'  +D^\frac{\ell-1}{r-1}\phi^\delta f_\ell'}  +O\left(\frac{D^\frac{2}{r-1}} {N^{2}} \rbrac{(s_\ell^-)^{\prime \prime}+ \phi^\delta f_\ell^{\prime \prime}}\right)} \nn \\
   = &\frac{1}{|V(i)|} \sum_{u \in V(i) \setminus B(v) \setminus \{v\}} \sbrac{ d_{\{u, v\} \uparrow \ell} + \sum_{k=1}^{\ell-1} c_{\ell, k+1 \rightarrow k}(v, u) -\frac{(\ell-1)s_\ell \cdot s_2 }{Nq} - \frac{D^\frac{\ell}{r-1} \phi^\delta}{N}f_\ell' + O\left( D^\frac{\ell+1}{r-1}N^{-2} \phi^{-\lambda} \right)},  \label{eq:DeltaZminus1}
\end{align}
where on the last line we used \eqref{eq:sellpmdef} to get $(s_\ell^-)'$ and Lemma \ref{lem:deterministicbounds} to simplify the big-O term. Now 
\be \label{eq:DeltaZminus2}
\sum_{u \in V(i) \setminus B(v) \setminus \{v\}}  d_{\{u, v\} \uparrow \ell} = O(d_\ell(v)) = O\rbrac{D^\frac{\ell-1}{r-1}}=O\rbrac{D^\frac{\ell}{r-1}\phi}.
\ee
The last expression follows from our assumption that $D > \phi^{-r}$, since $D^\frac{\ell-1}{r-1} < D^\frac{\ell}{r-1}\phi^{\frac{r}{r-1}} < D^\frac{\ell}{r-1}\phi$.
since each edge of $d_{\ell}(v)$ is counted for at most $\ell-1$ vertices $u$. Also, as we will explain,
\begin{align}
  & \sum_{u \in V(i) \setminus B(v) \setminus \{v\}}  \sum_{k=1}^{\ell-1} c_{\ell, k+1 \rightarrow k}(v, u)\nn\\
  &=\sum_{k=1}^{\ell-1} \sum_{u \in V(i) \setminus \{v\}}   c_{\ell, k+1 \rightarrow k}(v, u) + O\rbrac{D^\frac1{r-1}\phi \cdot D^{\frac{\ell-1}{r-1}}\phi^{1-(4 r-4) \lambda}} \nn\\
    &= \sum_{e \in d_\ell(v)} \sum_{w \in e \setminus \{v\}} d_2(w) + O\rbrac{D^{\frac{\ell}{r-1}}\phi^{1-(2 r-2\ell+2)\lambda}}\nn\\
    & = \sum_{e \in d_\ell(v) \setminus d_\ell'(v)} \sum_{w \in e \setminus \{v\}} d_2(w) + {O\rbrac{D^{\frac{\ell}{r-1}}\phi^{1-(6 r-2\ell - 4)\lambda}}}\nn\\
    &\subseteq (\ell-1)\rbrac{ s_\ell \pm 2D^\frac{\ell-1}{r-1}\phi^\delta f_\ell +O\rbrac{D'_\ell}} \rbrac{ s_2 \pm 2D^\frac{1}{r-1}\phi^\delta f_2}+ {O\rbrac{D^{\frac{\ell}{r-1}}\phi^{1-(6 r-2\ell - 4)\lambda}}}\nn\\
    &\subseteq (\ell-1)\rbrac{ s_\ell \pm 2D^\frac{\ell-1}{r-1}\phi^\delta f_\ell } \rbrac{ s_2 \pm 2D^\frac{1}{r-1}\phi^\delta f_2}+ {O\rbrac{D^{\frac{\ell}{r-1}}\phi^{1-(6 r-2\ell - 4)\lambda}}}\label{eq:DeltaZminus3}
\end{align}

Indeed, on the second line we used Conditions \ref{cond:Bbound} and \ref{eq:DynamicCoDegBound}. To justify the third line, note that for $k=1$ we have 
\[
\sum_{u \in V(i) \setminus \{v\}}   c_{\ell, 2 \rightarrow 1}(v, u) = \sum_{e \in d_\ell(v)} \sum_{w \in e \setminus \{v\}}  d_2(w) 
\]

and to take care of $k \ge 2$ we note that
\[
\sum_{k=2}^{\ell-1} c_{\ell, k+1 \rightarrow k}(v, u) = O\rbrac{d_\ell(v) \sum_{k=2}^{\ell-1} D_{k \uparrow k+1}} = O\rbrac{D^{\frac{\ell}{r-1}}\phi^{1-(2 r-2\ell+2)\lambda}}.
\]
To justify the fourth line, note that the dropped terms contribute $O(D'_\ell \cdot D_{1 \uparrow 2}) = O(D^\frac{\ell}{r-1} \phi^{1-(6r-2\ell-4)\lambda}).$ The fifth line follows from our bounds on $d_\ell(v)$ and $d_2(w)$, and the last line is from our bound on $D'_\ell$.\\

 Picking back up from \eqref{eq:DeltaZminus1} and using \eqref{eq:DeltaZminus2}, \eqref{eq:DeltaZminus3}, and \eqref{eq:DynamicVBound} we have that $\mathbb{E}\left[\Delta Z_{\ell}^{-}(v)\right]$ is at most (explanation follows) 

\begin{align}
     &\frac{(\ell-1)\rbrac{ s_\ell + 2D^\frac{\ell-1}{r-1}\phi^\delta f_\ell } \rbrac{ s_2 + 2D^\frac{1}{r-1}\phi^\delta f_2}+ {O\rbrac{D^{\frac{\ell}{r-1}}\phi^{1-(6 r-2\ell - 4)\lambda}}}}{Nq - N\phi^\delta f_V}\nn\\
    &\qquad \qquad - \frac{|V(i)\setminus B(v) \setminus \{v\}|}{|V(i)|}\sbrac{\frac{(\ell-1)s_\ell \cdot s_2 }{Nq} + \frac{D^\frac{\ell}{r-1} \phi^\delta}{N}f_\ell' + O\left( D^\frac{\ell+1}{r-1}N^{-2} \phi^{-\lambda} \right)}\nn\\
    & \le \frac{2(\ell-1) \sbrac{D^\frac{\ell-1}{r-1}\phi^\delta s_2f_\ell + D^\frac{1}{r-1}\phi^\delta s_\ell f_2}}{Nq } + \frac{(\ell-1)\phi^\delta s_\ell s_2 f_V}{Nq^2} -\frac{D^\frac{\ell}{r-1} \phi^\delta f_\ell'}{N} + O\rbrac{D^\frac{\ell}{r-1} N^{-1} \phi^{2 \delta - 5 \lambda}}\label{eq:ZminusL2}\\
    & = \frac{D^\frac{\ell}{r-1}\phi^\delta}{N}\bigg[2(\ell-1)\binom{r-1}{\ell-1} t^{r-\ell} q^{\ell-2} f_2+2(\ell-1)(r-1) t^{r-2} f_{\ell} \label{eq:ZminusL3}\\
    &\qquad \qquad \qquad\qquad \qquad +(\ell-1)(r-1)\binom{r-1}{\ell-1} t^{2 r-\ell-2} q^{\ell-2} f_V-f_{\ell}^{\prime}\bigg] +O\rbrac{D^\frac{\ell}{r-1} N^{-1} \phi^{2 \delta - 5 \lambda}}.\nn
\end{align}
Indeed, to get line \eqref{eq:ZminusL2} we use Lemma \ref{lem:est}, Lemma \ref{lem:deterministicbounds}, Condition \eqref{cond:Bbound} and line \eqref{eq:LowerBoundonN}. Line \eqref{eq:ZminusL3} is just algebra. Now observe that \eqref{eq:ZminusL3} is negative since $\phi^\delta$ is much larger than $\phi^{2\delta - 5 \lambda}$ and the quantity in square brackets is \[2(\ell-1)\binom{r-1}{\ell-1} t^{r-\ell} q^{\ell-2} f_2+2(\ell-1)(r-1) t^{r-2} f_{\ell}  +(\ell-1)(r-1)\binom{r-1}{\ell-1} t^{2 r-\ell-2} q^{\ell-2} f_V-f_{\ell}^{\prime} = -\Omega(1)\]
by Lemma \ref{lem:deterministicbounds} parts (4), (5) and (6). Therefore $Z_\ell^-$ is a supermartingale. Similarly, $\Zs_\ell^-$ is a submartingale.

\subsection{Applying martingale variation inequalities}

We complete the proof by applying martingale variation inequalities to prove that $Z_V$ and $Z_{\ell}^{ \pm}$ remain negative a.a.s. We will apply the following lemmas (which both follow from Hoeffding \cite{hoef}):

\begin{lemma}\label{symmetricAH} Let $X_i$ be a supermartingale such that 
$|\Delta X| \leq c_i$ for all $i$. Then $$ \P(X_m - X_0 > d) \leq  \exp\left(-\frac{d^2}{2 \displaystyle\sum_{i\leq m} c_i^2 }\right)$$ \end{lemma}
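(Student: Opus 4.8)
The plan is to run the standard exponential-moment (Chernoff) argument; this is precisely the Azuma--Hoeffding inequality stated for supermartingales, which is what the paper attributes to Hoeffding \cite{hoef}. Fix a parameter $s>0$ to be optimized at the end and consider the exponential moment $\E\big[e^{s(X_m-X_0)}\big]$. Writing $X_m-X_0=\sum_{i}\Delta X_i$ with $\Delta X_i=X_{i+1}-X_i$ and $|\Delta X_i|\le c_i$, the goal is to peel off one factor of $e^{s\Delta X_i}$ at a time, using the tower property with respect to the natural filtration $\mathcal{F}_0,\mathcal{F}_1,\ldots$ of the process.

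The key input is a conditional version of Hoeffding's lemma. Fix a step and condition on the history $\mathcal{F}_i$. Since $\Delta X_i\in[-c_i,c_i]$, convexity of $x\mapsto e^{sx}$ gives the pointwise bound $e^{s\Delta X_i}\le \tfrac{c_i-\Delta X_i}{2c_i}e^{-sc_i}+\tfrac{c_i+\Delta X_i}{2c_i}e^{sc_i}=\cosh(sc_i)+\tfrac{\Delta X_i}{c_i}\sinh(sc_i)$. Taking conditional expectations and using that $(X_i)$ is a supermartingale, so $\E[\Delta X_i\mid\mathcal{F}_i]\le 0$, together with $\sinh(sc_i)\ge 0$ for $s>0$, yields $\E[e^{s\Delta X_i}\mid\mathcal{F}_i]\le\cosh(sc_i)\le e^{s^2c_i^2/2}$. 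Now argue by induction on $m$: conditioning on $\mathcal{F}_{m-1}$ and using that $e^{s(X_{m-1}-X_0)}$ is $\mathcal{F}_{m-1}$-measurable, $\E[e^{s(X_m-X_0)}]=\E\big[e^{s(X_{m-1}-X_0)}\,\E[e^{s\Delta X_{m-1}}\mid\mathcal{F}_{m-1}]\big]\le e^{s^2c_m^2/2}\,\E[e^{s(X_{m-1}-X_0)}]$, and iterating gives $\E\big[e^{s(X_m-X_0)}\big]\le\exp\big(\tfrac{s^2}{2}\sum_{i\le m}c_i^2\big)$.

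Finally apply Markov's inequality: $\P(X_m-X_0>d)\le e^{-sd}\,\E[e^{s(X_m-X_0)}]\le\exp\big(-sd+\tfrac{s^2}{2}\sum_{i\le m}c_i^2\big)$, and choose $s=d/\sum_{i\le m}c_i^2$ to minimize the exponent, producing the claimed bound $\exp\big(-\tfrac{d^2}{2\sum_{i\le m}c_i^2}\big)$ (the case $\sum_{i\le m}c_i^2=0$ being trivial). There is no real obstacle here, as the argument is entirely classical. The only point that needs a moment's care is the supermartingale (rather than martingale) step: one must check that the sign of $\E[\Delta X_i\mid\mathcal{F}_i]$ works in our favor, which it does exactly because we are controlling an upper tail with $s>0$, so the coefficient $\sinh(sc_i)$ multiplying that conditional mean is nonnegative. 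The only bookkeeping is the iterated conditioning, which is routine. (The companion two-sided version quoted alongside follows by applying the same bound to $-X_i$ and taking a union bound.)
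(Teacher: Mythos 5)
Your proof is correct and is exactly the classical exponential-moment (Hoeffding/Azuma) argument: the paper does not give its own proof of this lemma but simply cites Hoeffding \cite{hoef}, and your derivation faithfully reconstructs that cited argument, including the one non-obvious point (that the supermartingale inequality has the right sign because $\sinh(sc_i)\ge 0$ for $s>0$).
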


\begin{lemma}\label{asymmetricAH} Let $X_i$ be a supermartingale such that $-B \leq \Delta X \leq b$ for all $i$, for some $b < \frac{B}{10}$. Then for any $d < bm$ 
we have $$ \P(X_m -X_0 > d) \leq \exp \left(- \frac{d^2}{3 m bB }\right)$$
\end{lemma}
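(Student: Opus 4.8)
The plan is to prove Lemma~\ref{asymmetricAH} by the exponential moment (Chernoff--Bernstein) method, with the hypotheses $b<B/10$ and $d<bm$ used only at the end to secure the constant $3$ in the denominator and the admissibility of the optimizing parameter. Conditioning on $\mathcal{F}_0$ we may treat $X_0$ as deterministic. Fix $\theta\in(0,1/B]$, to be chosen, and apply Markov's inequality to $e^{\theta(X_m-X_0)}$:
\[
\P(X_m-X_0>d)\le e^{-\theta d}\,\E\!\left[e^{\theta(X_m-X_0)}\right].
\]
To bound the exponential moment I would telescope: writing $e^{\theta(X_m-X_0)}=e^{\theta(X_{m-1}-X_0)}e^{\theta\Delta X_{m-1}}$ and taking the $\mathcal{F}_{m-1}$-conditional expectation, then iterating, $\E[e^{\theta(X_m-X_0)}]$ is at most the product of the one-step conditional moment generating functions $\E[e^{\theta\Delta X_i}\mid\mathcal{F}_i]$. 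So the whole problem reduces to a single one-step estimate.

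For the one-step estimate, fix $i$, put $Y=\Delta X_i$ and $\mu=\E[Y\mid\mathcal{F}_i]$; then $-B\le Y\le b$ and $\mu\le0$. Two elementary inputs suffice. First, since $y\mapsto y^2$ is convex it lies below its chord on $[-B,b]$, which gives $Y^2\le bB-(B-b)Y$ pointwise and hence $\E[Y^2\mid\mathcal{F}_i]\le bB-(B-b)\mu$. Second, because $\theta Y\le\theta b\le b/B<1/10$, we may use the refined bound $e^x\le1+x+c_0x^2$ valid for all $x\le1/10$, with $c_0=100(e^{1/10}-1-1/10)<0.52$ (and $c_0\ge\tfrac12$). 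Combining,
\[
\E[e^{\theta Y}\mid\mathcal{F}_i]\le1+\theta\mu+c_0\theta^2\big(bB-(B-b)\mu\big)=1+\theta\mu\big(1-c_0\theta(B-b)\big)+c_0\theta^2bB.
\]
Since $c_0\theta(B-b)\le c_0\theta B\le c_0<1$, the bracket multiplying $\theta\mu$ is nonnegative, and $\mu\le0$ makes that term $\le0$; therefore $\E[e^{\theta Y}\mid\mathcal{F}_i]\le1+c_0\theta^2bB\le e^{c_0\theta^2bB}$, a deterministic bound that holds uniformly in $i$.

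Plugging this into the telescoped product gives $\P(X_m-X_0>d)\le\exp(-\theta d+c_0m\theta^2bB)$ for every $\theta\in(0,1/B]$. Choosing $\theta=d/(2c_0mbB)$ makes the exponent equal to $-d^2/(4c_0mbB)$, and this $\theta$ is admissible because $d<bm\le2c_0bm$ (using $c_0\ge\tfrac12$) forces $\theta<1/B$; as $4c_0<3$ we conclude $\P(X_m-X_0>d)\le\exp(-d^2/(3mbB))$. The only delicate point is the arithmetic of constants: the naive inequality $e^x\le1+x+x^2$ would yield only denominator $4mbB$, so one genuinely needs the smallness $\theta b<1/10$ — which is exactly where $b<B/10$ enters — to push the quadratic coefficient below $3/4$, while $d<bm$ is what keeps the optimal $\theta$ inside $(0,1/B]$. (Alternatively one could simply invoke the martingale form of Hoeffding's inequality \cite{hoef}; the companion Lemma~\ref{symmetricAH} is the classical one-sided Azuma--Hoeffding bound and follows from the same scheme, using Hoeffding's lemma in place of the one-step estimate above.)
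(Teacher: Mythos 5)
Your proof is correct, and it is also worth noting that the paper itself offers no proof of Lemma~\ref{asymmetricAH} at all --- both Lemma~\ref{symmetricAH} and Lemma~\ref{asymmetricAH} are stated with only a citation to Hoeffding's paper. So you are supplying a self-contained argument where the authors only supply a reference. Your route is the standard one-sided exponential-moment (Bernstein) scheme, and every step checks out: the telescoping of conditional MGFs is the canonical reduction to a one-step estimate; the chord bound $Y^2\le bB-(B-b)Y$ on $[-B,b]$ is exact; the constant $c_0=100(e^{1/10}-1-1/10)\approx0.5171$ is indeed in $[\tfrac12,0.52)$ and the inequality $e^x\le 1+x+c_0x^2$ does hold for all $x\le 1/10$ because $x\mapsto(e^x-1-x)/x^2$ is increasing with limit $1/2$ at $0$; the supermartingale condition $\mu\le 0$ together with $c_0\theta(B-b)<1$ correctly kills the linear term; and the admissibility of $\theta^\ast=d/(2c_0mbB)\le 1/B$ is exactly where $d<bm$ and $c_0\ge\tfrac12$ are used. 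Since $4c_0\approx 2.07<3$, your argument in fact yields the slightly stronger exponent $-d^2/(4c_0mbB)$, from which the stated $-d^2/(3mbB)$ follows immediately. The one presentational nit is that instead of $c_0<1$ you actually only need $c_0\theta(B-b)\le c_0(1-b/B)<1$, which is even more generous, but your invocation of $c_0<1$ is certainly sufficient. Your closing remark --- that the companion Lemma~\ref{symmetricAH} is the classical one-sided Azuma--Hoeffding bound obtainable by the same scheme with Hoeffding's lemma in place of the Bernstein step --- is also accurate.
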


We apply Lemma \ref{symmetricAH} to the supermartingale $Z_V(i)$. Note that if the vertex $v$ is inserted to the independent set at step $i$ then $\Delta V(i)=-1-d_2(v).$ Now we have
$$
\begin{aligned}
 |\Delta Z_V| &\le |\Delta V(i)| + |\Delta N q| + |\Delta N \phi^{\delta} f_V|= O\rbrac{D_{1 \uparrow 2} + D^\frac{1}{r-1} q' + D^\frac{1}{r-1} \phi^{\delta} f_V'} = O\rbrac{D^\frac{1}{r-1} \phi^{-(2r-4)\lambda}} \eqcom 
\end{aligned}
$$
where the last equality holds by Lemma \ref{lem:deterministicbounds}.\\

As we have that $Z_V(0)=-N \phi^{\delta}$, by definition, the probability that $Z_V$ is positive at time step $\tau$ is at most

$$
\begin{aligned}
\exp \left\{-\Omega\left(\frac{\left(N \phi^{\delta}\right)^2}{N D^{-\frac{1}{r-1}} \log^\frac{1}{r-1}(1/\phi) \left[D^\frac{1}{r-1} \phi^{-(2r-4)\lambda}\right]^2}\right)\right\}& \leq \exp \left\{-\Omega\left(ND^{-\frac{1}{r-1}} \phi^{2\delta + (4r-8)\lambda} \log^{-\frac{1}{r-1}}(1/\phi)\right)\right\} \\
&\leq \exp \left\{- \Omega\rbrac{\phi^{-1+2\delta + (4r-8)\lambda} \log^{-\frac{1}{r-1}}(1/\phi)} \right\} =o(1),
\end{aligned}
$$
where on the second line we used $N=\Omega\left(D^{\frac{1}{r-1}}\phi^{-1}\right)$ from \eqref{eq:LowerBoundonN}.\\

 For our upper bound on $d_{\ell}^{+}(v)$ we apply \ref{asymmetricAH} to the supermartingale $Z_{\ell}^{+}(v)$. Note that we have either $\Delta Z_{\ell}^{+}(v)=0$ due to the supermartingale beng frozen, or else we can upper bound $\Delta Z_{\ell}^{+}(v)$ as follows:
\be\label{eq:DeltaZ_ell_UB}
\Delta Z_{\ell}^{+}(v) = \Delta d^+_\ell(v) -\Delta s^+_\ell -\Delta D^\frac{\ell-1}{r-1} \phi^\delta f_\ell  \le D_{2 \uparrow \ell+1}  = O(D^\frac{\ell-1}{r-1} \phi^{1-(2r-4)\lambda}).
\ee
Note that above, we used the fact that $s^+_\ell, f_\ell$ are increasing.
For a lower bound on $\Delta Z_{\ell}^{+}(v)$, using $\Delta d^+_\ell(v) \ge 0$ and Taylor's theorem, we obtain
\be \label{eq:DeltaZ_ell_LB}
\Delta Z_{\ell}^{+}(v)\geq -O\rbrac{\left[\left(s_{\ell}^{+}\right)^{\prime} +D^{\frac{\ell-1}{r-1}}f_\ell'\right] \frac{D^{\frac{1}{r-1}}}{N}  }\ge -O\left(\frac{D^{\frac{\ell}{r-1}}}{N}\right)
\ee

We have $Z_{\ell}^{+}(0)=-D^{\frac{\ell-1}{r-1}}\phi^{\delta}$. For our present application of Lemma \ref{asymmetricAH} we let $B=O\left(\frac{D^{\frac{\ell}{r-1}}}{N}\right), b=D^\frac{\ell-1}{r-1} \phi^{1-(2r-4)\lambda}$ (from \eqref{eq:DeltaZ_ell_UB}, \eqref{eq:DeltaZ_ell_LB}), $d=D^{\frac{\ell-1}{r-1}}\phi^\delta$ and $m=i_{\max}$. The assumptions of Lemma \ref{asymmetricAH} hold since $\frac{D^{\frac{\ell}{r-1}}}{N}=o\left(D^\frac{\ell-1}{r-1} \phi^{1-(2r-4)\lambda}\right)$ using that \eqref{eq:LowerBoundonN} and $r \geq 3$, and $D^{\frac{\ell-1}{r-1}}\phi^{\delta}=o\left(\frac{D^{\frac{\ell}{r-1}}}{N} \cdot i_{\max }\right)$. So the probability that $Z_{\ell}^{+}(v)$ is positive at step $\tau$ is at most

$$
\begin{aligned}
& \exp \left\{-{\Omega}\left(\frac{\left(D^{\frac{\ell-1}{r-1}}\phi^{\delta}\right)^2}{N D^{-\frac{1}{r-1}} \log^\frac{1}{r-1}(1/\phi)\cdot \frac{1}{N} D^{\frac{\ell}{r-1}} \cdot D^\frac{\ell-1}{r-1} \phi^{1-(2r-4)\lambda}}\right)\right\} = \exp \cbrac{ -\Omega \rbrac{\phi^{-1 + 2\delta + (2r-4)\lambda} \log^{-\frac{1}{r-1}}(1/\phi)} },
\end{aligned}
$$
 which is small enough to beat a union bound over $O(N)$ choices for $v$ and $\ell$. The lower bound on $d_{\ell}^+(v)$ follows similarly. For our upper bound on $d_{\ell}^{-}(v)$ we apply Lemma \ref{asymmetricAH} to the supermartingale $Z_{\ell}^{-}$. Note that
\be
-O\left(\frac{D^{\frac{\ell}{r-1}}}{N}\right)<\Delta Z_{\ell}^{-}(v)<O\left(\sum_{1 \leq k \leq \ell-1} C_{\ell, k+1 \rightarrow k}\right)=O\left(D^{\frac{\ell-1}{r-1}} \phi^{1-(4r-8)\lambda}\right)
\ee
where the lower bound argument follows similarly to \eqref{eq:DeltaZ_ell_LB}, and the upper bound accounts for the number of edges, which, after adding one of their vertices to the independent set, is contained in an edge of size $\ell$ containing $v$. We have $Z_{\ell}^{-}(0)=-D^{\frac{\ell-1}{r-1}}\phi^{\delta}$. The rest of this application of Lemma \ref{asymmetricAH} is very similar to the previous one. We let $B=O\left(\frac{D^{\frac{\ell}{r-1}}}{N}\right), b=D^\frac{\ell-1}{r-1} \phi^{1-(4r-8)\lambda}$, $d=D^{\frac{\ell-1}{r-1}}\phi^\delta$ and $m=i_{\max}$. The assumptions of Lemma \ref{asymmetricAH} hold since $\frac{D^{\frac{\ell}{r-1}}}{N}=o\left(D^\frac{\ell-1}{r-1} \phi^{1-(4r-8)\lambda}\right)$  by \eqref{eq:LowerBoundonN} and the fact that $r \geq 3$, and $D^{\frac{\ell-1}{r-1}}\phi^{\delta}=o\left(\frac{D^{\frac{\ell}{r-1}}}{N} \cdot i_{\max }\right)$. So the probability that $Z_{\ell}^{-}(v)$ is positive at step $\tau$ is at most

$$
\begin{aligned}
& \exp \left\{-{\Omega}\left(\frac{\left(D^{\frac{\ell-1}{r-1}}\phi^{\delta}\right)^2}{N D^{-\frac{1}{r-1}} \log^\frac{1}{r-1}(1/\phi)\cdot \frac{1}{N} D^{\frac{\ell}{r-1}} \cdot D^\frac{\ell-1}{r-1} \phi^{1-(4r-8)\lambda}}\right)\right\} = \exp \cbrac{ -\Omega \rbrac{\phi^{-1 + 2\delta + (4r-8)\lambda} \log^{-\frac{1}{r-1}}(1/\phi)} },
\end{aligned}
$$
 which is small enough to beat a union bound over $O(N)$ choices for $v$ and $\ell$. The lower bound on $d_{\ell}^-(v)$ follows similarly.

\section{Concluding remarks}
There are quite a few natural next directions to follow. Here we consider the case in which $w$ is small enough relative to $n$ such that the matching sunflowers, and those with relatively small kernels, are the dominant type. Thus, it is of interest to see what happens when this is not the case. In fact, when $\frac{n}{w}<r$ any existing sunflower must have a non-empty kernel, so perhaps the regime where $w \in (\frac{n
}{r}, n-1]$ would be a natural next step.
In addition, the algorithm described in \cite{abbott1992set} includes an additional backtracking step. The authors use a basic learning algorithm to help determine which edges should be removed in the backtracking step, and also use it to bias the algorithm towards choosing specific sets. Thus, it would be interesting if one could give a proof related to this more involved algorithm.
\subsection*{Acknowledgment}

The authors would like to thank Nick Christo for bringing them together.
\bibliographystyle{alpha}
\bibliography{bib.bib}

\appendix

\section{The sunflower-free process} \label{sec:SF-free}
We use this section to show that the Sunflower Hypergraph satisfies the assumptions of Theorem \ref{thm:ind}. Again, recall that $\hnw$ is the hypergraph whose vertex set $\mathscr{V}_{n,w}$ is the set of all possible $\binom{n}{w}$ sets in the $w$-uniform complete hypergraph $\mathcal{K}_{n,w}$, and the edge set  $\mathscr{E}_{w,r}$ is the  set of all possible $r$-sunflowers in $\mathcal{K}_{n,w}$ where each edge in $\mathscr{E}_{w,r}$ is an $r$-sunflower in $\mathcal{K}_{n,w}$. Firstly notice that $\hnw$ is $r$-uniform and $D$-regular, where 
\be \label{eq:SF-Ddef}
D :=\sum\limits_{s=0}^{w-1}\binom{w}{s}\frac{1}{(r-1)!}\prod\limits_{i=1}^{r-1}\binom{n-i w+(i-1)s}{w-s}= \sum\limits_{s=0}^{w-1} \frac{w!(n-w)!}{(r-1)! s![(w-s)!]^{r} [(n-rw+(r-1)s]! }.
\ee
In the equation displayed above, $s$ represents the size of the kernel. Since $\hnw$ is uniform and regular, Conditions \eqref{cond:almostregular} and \eqref{cond:max1degree} are satisfied for any $\phi >0$. We will let 
\be\label{eq:SF-phidef}
\phi:= \exp \cbrac{-\frac{w^2}{10n}}
\ee
 Since we assume $w= n^{\alpha}$  for $\frac12 < \alpha < 1$, we have $\phi=o(1).$ In this regime, we will see that the ``matching sunflowers'', or the sunflowers with empty kernels, contribute the most to the degree.
We now estimate $N$ and $D$.
\begin{claim}\label{clm:SF-NDest}
    We have
    \be\label{eq:SF-NDest}
    \begin{split}
         N & = \operatorname{exp}\left\{w+ w\log\left(\frac{n}{w}\right) -\frac{w^2}{2n} + o\left( \frac{w^2}{n}\right) \right\},\\
       D & = \operatorname{exp}\left\{w(r-1) + w(r-1)\log\left(\frac{n}{w}\right) -\frac{w^2(r^2-1)}{2n} +o\left( \frac{w^2}{n}\right)\right\}.
    \end{split}
    \ee
\end{claim}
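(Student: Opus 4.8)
The plan is to estimate $\log N$ and $\log D$ directly from Stirling's formula, keeping every term down to order $w^2/n$. The crucial point is that the target error $o(w^2/n)$ is \emph{smaller} than the correction terms of order $w^2/n$ but \emph{larger} than both $\log n$ and $w^3/n^2$: since $w=n^\alpha$ with $\tfrac12<\alpha<1$ we have $w^2/n=n^{2\alpha-1}\to\infty$, so $\log n=o(w^2/n)$, while $w^3/n^2=n^{3\alpha-2}=o(n^{2\alpha-1})$ because $\alpha<1$. We will also need $w^r/n^{r-1}=n^{r\alpha-r+1}=o(w^2/n)$, which holds since $r\alpha-r+1<2\alpha-1\iff(r-2)\alpha<r-2$, true for $r\ge3$ as $\alpha<1$.

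The single computation underlying everything is that for $0\le i\le r-1$,
\[
\log\binom{n-iw}{w}=w\log\!\Big(\frac nw\Big)+w-\frac{(2i+1)\,w^2}{2n}+o\Big(\frac{w^2}{n}\Big),
\]
which follows from $\log\binom{M}{w}=w\log(M/w)+w-\tfrac{w^2}{2M}+O(w^3/M^2)+O(\log M)$ with $M=n-iw$, together with $\log(1-iw/n)=-iw/n+O(w^2/n^2)$ and $\tfrac{w^2}{2(n-iw)}=\tfrac{w^2}{2n}+O(w^3/n^2)$. Taking $i=0$ gives the stated formula for $N=\binom nw$. For $D$ one writes $D=\sum_{s=0}^{w-1}T_s$ where $T_s$ is the $s$-th summand of \eqref{eq:SF-Ddef} (so $s$ is the kernel size); the $s=0$ term is $T_0=\tfrac1{(r-1)!}\prod_{i=1}^{r-1}\binom{n-iw}{w}$, and summing the display over $i=1,\dots,r-1$ with $\sum_{i=1}^{r-1}(2i+1)=r^2-1$ gives
\[
\log T_0=(r-1)w+(r-1)w\log\!\Big(\frac nw\Big)-\frac{(r^2-1)\,w^2}{2n}+o\Big(\frac{w^2}{n}\Big),
\]
which is exactly the claimed expression for $\log D$. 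Hence it remains only to show $D=T_0\,e^{o(w^2/n)}$, and since trivially $D\ge T_0$ it is enough to bound the tail $\sum_{s=1}^{w-1}T_s\le T_0\,e^{o(w^2/n)}$.

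For the tail, set $a:=w^r/n^{r-1}$ and $m_i:=n-iw\ge n-rw=(1-o(1))n$, and start from the exact identity
\[
\frac{\binom{m_i+(i-1)s}{w-s}}{\binom{m_i}{w}}=\frac{w!}{(w-s)!}\cdot\frac{(m_i+(i-1)s)!}{m_i!}\cdot\frac{(m_i-w)!}{(m_i+is-w)!}.
\]
Estimating the three factors crudely (using $w!/(w-s)!\le w^s$, $m_i-w\ge n/2$, and $(i-1)s\le rs$) gives $\binom{m_i+(i-1)s}{w-s}\le\binom{m_i}{w}(w/n)^s e^{O(sw/n)}$, and multiplying over the $r-1$ factors of $T_s/T_0$ together with $\binom ws\le(ew/s)^s$ yields
\[
\frac{T_s}{T_0}\le\Big(\frac{ew^r}{s\,n^{r-1}}\Big)^s e^{O(sw/n)}=\Big(\frac{ea'}{s}\Big)^s,\qquad a':=a\,e^{O(w/n)}=a\,(1+o(1)).
\]
Since $\max_{s>0}s\log(ea'/s)=a'$ (and for $1\le s\le w-1$ the maximum of the left side is at most $\max\{1,a'\}$, because when $a'<1$ the function decreases on $[1,w-1]$ and is bounded by $\log(ea')<1$), we get $\log(T_s/T_0)\le\max\{1,a'\}=o(w^2/n)$ uniformly in $s$. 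Summing over the at most $w$ values of $s$ and using $\log w=o(w^2/n)$ gives $\sum_{s=1}^{w-1}T_s\le T_0\,e^{o(w^2/n)}$, completing the argument.

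The main obstacle is the precision demanded: the correction term $\tfrac{(r^2-1)w^2}{2n}$ we want to pin down lives at the same scale $w^2/n$ as a careless error estimate, so two points need care. First, the higher‑order Stirling terms must genuinely be $o(w^2/n)$, which is why one needs $\alpha<1$ (to kill $w^3/n^2$) and $\alpha>\tfrac12$ (to kill $\log n$) rather than merely $\alpha\in(0,1)$. Second, the kernel‑size‑$s$ terms with $s\ge1$ must contribute only a factor $e^{o(w^2/n)}$; this works because $T_s/T_0$ is maximized near $s\approx a=o(w)$, where the crude multiplicative slack $e^{O(sw/n)}$ in the per‑summand bound is itself only $e^{o(w^2/n)}$ — and this is precisely where the hypothesis $r\ge3$ is used, since it is what makes $a=w^r/n^{r-1}=o(w^2/n)$.
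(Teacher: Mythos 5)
Your proposal is correct, and it takes a genuinely different route from the paper for the $D$-estimate. Both you and the paper estimate $N$ and the $s=0$ term of $D$ via Stirling with the same bookkeeping that pins down the $w^2/n$ coefficient. Where you diverge: the paper analyzes the ratio $D_{s+1}/D_s$, shows the sequence is unimodal, identifies the maximizing kernel size $s^*$ (which is $0$ when $\alpha\le\frac{r-1}{r}$ and $\approx n^{r\alpha-r+1}$ otherwise), and then Stirlingizes $D_{s^*}$ directly — a computation that requires a case split on $\alpha$ and is somewhat delicate because $s^*$ depends on the regime. You instead only Stirlingize $T_0=D_0$ and show $\sum_{s\ge1}T_s/T_0\le e^{o(w^2/n)}$ via the clean bound $T_s/T_0\le(ea'/s)^s$ with $a'\approx w^r/n^{r-1}$, using $\max_s s\log(ea'/s)=a'=o(w^2/n)$. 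This avoids the case split entirely and avoids Stirlingizing a term whose parameter $s^*$ must first be located; it is arguably more elementary. The trade-off is that the paper's unimodality analysis also reveals where the mass concentrates (the typical kernel size), structural information that your bound does not immediately surface, though it is not needed for the claim. Your identification of where $r\ge3$ enters ($w^r/n^{r-1}=o(w^2/n)$ fails for $r=2$) is correct and more explicit than in the paper.
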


Note that this claim implies that $D > \phi^{-r}$ as required in the assumptions of Theorem \ref{thm:ind}.

\begin{proof}[Proof of Claim \ref{clm:SF-NDest}]
Recall Stirling's approximation, which says
\be
n! \approx \sqrt{2 \pi n} \left( \frac{n}{e} \right)^n.
\ee
Thus we have
\be\label{eq:approximationofN}
\begin{split}
    N  &= \binom nw = \frac{n!}{w! (n-w)!} = \frac{\left( \frac{n}{e} \right)^n}{ \left( \frac{w}{e} \right)^w  \left( \frac{n-w}{e} \right)^{n-w}} \exp\{O(\log n)\}\\
    &=\operatorname{exp}\left\{n \log n - w \log w - (n-w) \log(n-w) + O(\log n)\right\}\\
&=\operatorname{exp}\left\{n \log n - w \log w - (n-w) \sbrac{\log n - \frac wn - \frac{w^2}{2n^2}+O\rbrac{\frac{w^3}{n^3}}} + O(\log n)\right\}\\
&=\operatorname{exp}\left\{w+ w\log\left(\frac{n}{w}\right) -\frac{w^2}{2n} + o\left( \frac{w^2}{n}\right) \right\},
\end{split}
\ee
where on the third line we used the fact that when $y = o(x)$ we have $\log(x+y) = \log x + \frac{y}{x} - \frac{y^2}{2x^2} + O\rbrac{\frac{y^3}{x^3}}.$ This follows from the Taylor series  $\log(1+t) = t - \frac 12 t^2 + \cdots $.
Thus our estimate of $N$ is justified. Now we estimate $D$.

 Let $D_s$ be the term corresponding to a particular value of $s$ in the sum from \eqref{eq:SF-Ddef}. We bound the following ratio:
\be \label{eq:R(s)}
\begin{split}
R(s):=&\frac{D_{s+1}}{D_s} = \frac{\binom{w}{s+1} \frac{(n-w)_{(r-1)(w-s-1)}}{[(w-s-1)!]^{r-1} }}{\binom{w}{s} \frac{(n-w)_{(r-1)(w-s)}}{[(w-s)!]^{r-1} }}\\
& = \frac{(w-s)^r}{(s+1)\bigg(n-w-(r-1)(w-s)\bigg)_{r-1}}
\end{split}
\ee
where we use $(x)_n\coloneqq x(x-1)(x-2) \dots (x-n+1)$ to denote the falling factorial.
We claim that  
\[
\max\{D_s: 0 \le s \le w-1\} = \begin{cases} D_0 & \text{ if } \alpha \le \frac{r-1}{r},\\
D_{s^*} \text{ for some } s^* \approx n^{r\alpha - r + 1} & \text{ otherwise.}
\end{cases}
\]
Indeed, \eqref{eq:R(s)} shows that for fixed $r, n, w$ and letting $s$ be a real number, $R(s)$ is positive and strictly decreasing on $[0, w]$ and $R(w)=0$. If $\alpha \le \frac{r-1}{r}$ then $R(0) \le 1$, justifying the first case above. If $1> \alpha > \frac{r-1}{r}$ then the maximum $D_s$ occurs at the largest integer value $s$ such that $R(s-1)>1$. But we can see that there is a real solution $s$ to the equation $R(s)=1$, where $s \approx \frac{w^r}{n^{r-1}} = n^{r\alpha - r + 1}$. For that it helps to observe that this value of $s$ is much smaller than $w=n^\alpha$ since $\alpha < 1$ and $r \ge 3$. This justifies the second case above. 

We now estimate the largest possible $D_s$. Let $s^*$ be the value of $s$ maximizing $D_s$, so $s^*=0$ if $\alpha \le \frac{r-1}{r}$ and otherwise $s^* \approx n^{r\alpha - r + 1}.$ Then (using the convention $0 \log 0 = 0$),

\begin{align*}
D_{s^*} &= \frac{w!(n-w)!}{(r-1)! s^*![(w-s^*)!]^{r} [(n-rw+(r-1)s^*]! }\\
&= \frac{\left( \frac{w}{e} \right)^w\left( \frac{n-w}{e} \right)^{n-w}}{\left( \frac{s^*}{e} \right)^{s^*}\left( \frac{w-s^*}{e} \right)^{r(w-s^*)} \left( \frac{n-rw+(r-1)s^*}{e} \right)^{n-rw+(r-1)s^*}} \exp\{O(\log n)\}\\ 
&= \exp\bigg\{ w \log w + (n-w) \log (n-w) - s^* \log s^* - r(w-s^*) \log(w-s^*)\\
&  \qquad - (n-rw+(r-1)s^*) \log [n-rw+(r-1)s^*] + O(\log n)\bigg\}\\
& = \exp\bigg\{ w \log w + (n-w) \sbrac{\log n - \frac wn - \frac{w^2}{2n^2} +O\rbrac{\frac{w^3}{n^3}}}  - r(w-s^*) \sbrac{\log w - O\rbrac{\frac {s^*}{w}} }\\
&  \qquad - (n-rw+(r-1)s^*) \sbrac{\log n - \frac {rw-(r-1)s^*}n - \frac{[{rw-(r-1)s^*}]^2}{2n^2} +O\rbrac{\frac{w^3}{n^3}}} + o\left( \frac{w^2}{n}\right)\bigg\}\\
&=\operatorname{exp}\left\{w(r-1) + w(r-1)\log\left(\frac{n}{w}\right) -\frac{w^2(r^2-1)}{2n} +o\left( \frac{w^2}{n}\right)\right\}.
\end{align*}

Now $D_{s^*} \le D \le wD_{s^*}$, and our estimate of $D_{s^*}$ already has a multiplicative error $\exp\{o(w^2/n)\}$ which can absorb the $w$ in the upper bound for $D$. Thus the lower and upper bound for $D$ are both of the form claimed.

\end{proof}

Our next claim verifies Condition \eqref{cond:maxLdegree}. Note that since $\hnw$ is $r$-uniform, it suffices to check the case $k=r$.
\begin{claim}\label{clm:SF-Delta}
    For $2 \le \ell \le r-1$ we have
    \[
    \Delta_{\ell}(\hnw) \le \phi D^\frac{r-\ell}{r-1}
    \]
\end{claim}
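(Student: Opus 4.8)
The plan is to fix an $\ell$-subset $A = \{e_1,\dots,e_\ell\}$ of vertices of $\hnw$ (so each $e_j$ is a $w$-set in the universe $U$), and count the number of $r$-sunflowers containing all of $e_1,\dots,e_\ell$. Such a sunflower consists of the $e_j$'s together with $r-\ell$ additional $w$-sets, and all $r$ sets share a common kernel $K$. First I would observe that $K$ must be contained in $e_1 \cap \dots \cap e_\ell$, and more importantly that $K = e_i \cap e_j$ for every pair $i\ne j$ in the whole sunflower; in particular, once $A$ is fixed, either $A$ already determines a candidate kernel $K$ (when $\ell \ge 2$, we must have $e_i \cap e_j = K$ for all $i,j\le\ell$, otherwise $d_A = 0$), or the count is zero. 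So we may assume the pairwise intersections among $e_1,\dots,e_\ell$ are all equal to some fixed set $K$ of size $s := |K|$, with $0 \le s \le w-1$.

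Next I would count the number of ways to extend $\{e_1,\dots,e_\ell\}$ to a full $r$-sunflower with kernel $K$: we need to choose $r-\ell$ further $w$-sets $e_{\ell+1},\dots,e_r$, each containing $K$, each disjoint from $\bigcup_{j\le\ell}(e_j \setminus K)$ outside of $K$, and pairwise intersecting only in $K$. The petals $e_j \setminus K$ for $j \le \ell$ occupy $\ell(w-s)$ points of $U \setminus K$; each new petal must avoid those and the previously chosen new petals. Thus the number of extensions is at most
\[
\prod_{i=\ell}^{r-1} \binom{n - s - i(w-s)}{w-s} \le \left[\binom{n}{w-s}\right]^{r-\ell},
\]
and summing over the $s+1 \le w$ choices of which subset of the (at most $w$-element) set $e_1\cap\dots\cap e_\ell$ is the kernel (in fact $s$ is essentially determined, but a crude sum over $s$ loses only a factor $w$), we get $\Delta_\ell(\hnw) \le w \max_s \big[\binom{n}{w-s}\big]^{r-\ell}$, and the maximum over $s$ is at $s=0$ in the relevant regime. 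Compare this with $D$: by \eqref{eq:SF-Ddef} and the analysis in the proof of Claim \ref{clm:SF-NDest}, $D = \exp\{(r-1)(w + w\log(n/w)) - \tfrac{w^2(r^2-1)}{2n} + o(w^2/n)\}$, so $D^{\frac{r-\ell}{r-1}} = \exp\{(r-\ell)(w + w\log(n/w)) - \tfrac{(r-\ell)w^2(r+1)}{2n} + o(w^2/n)\}$, while $\binom{n}{w}^{r-\ell} = \exp\{(r-\ell)(w + w\log(n/w)) - \tfrac{(r-\ell)w^2}{2n} + o(w^2/n)\}$ using \eqref{eq:approximationofN}. The ratio $\binom{n}{w}^{r-\ell}/D^{\frac{r-\ell}{r-1}}$ is therefore $\exp\{\tfrac{(r-\ell)w^2}{2n}\big((r+1) - 1\big) + o(w^2/n)\} = \exp\{\tfrac{(r-\ell)r w^2}{2n} + o(w^2/n)\}$ — which is \emph{growing}, going the wrong way. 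So the crude bound $\binom{n}{w-s}^{r-\ell}$ is not good enough, and the real work is to show that the $\ell(w-s)$ forbidden points actually cost enough.

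The main obstacle, then, is getting a sufficiently sharp upper bound on the number of extensions: I need roughly $\prod_{i=\ell}^{r-1}\binom{n - i(w-s)}{w-s}$ rather than $\binom{n}{w-s}^{r-\ell}$, tracking how the forbidden-point deficit $i(w-s) \approx iw$ shrinks each binomial. Using $\binom{n - iw}{w} = \binom{n}{w}\exp\{-\tfrac{iw^2}{n}(1+o(1))\}$ (again from the Taylor expansion of $\log$ as in \eqref{eq:approximationofN}), the product over $i = \ell,\dots,r-1$ picks up an extra factor $\exp\{-\tfrac{w^2}{n}\sum_{i=\ell}^{r-1} i\,(1+o(1))\} = \exp\{-\tfrac{w^2}{2n}(r(r-1) - \ell(\ell-1))(1+o(1))\}$ relative to $\binom{n}{w}^{r-\ell}$. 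One then checks that $r(r-1) - \ell(\ell-1) \ge (r-\ell)r$ (equivalently $r(r-1) - \ell(\ell-1) - r(r-\ell) = r\ell - \ell(\ell-1) = \ell(r-\ell+1) \ge 0$, with strict inequality since $\ell < r$), so the extra decay in the extension count more than compensates the growth in the ratio above; combined with $\phi = \exp\{-w^2/(10n)\}$ and the slack from $\ell(r-\ell+1) \ge r$ (which holds for all $2\le\ell\le r-1$), the bound $\Delta_\ell(\hnw) \le \phi D^{\frac{r-\ell}{r-1}}$ follows once $n$ is large. I would also double-check that the case $s > 0$ (non-empty kernel), and in particular the regime $\alpha > \tfrac{r-1}{r}$ where the dominant term in $D$ has $s^* > 0$, does not break the comparison — here one uses that the extension count for a fixed $s$-kernel is maximized at the same $s$ as $D_s$, so the ratio is controlled by the same computation as in Claim \ref{clm:SF-NDest}.
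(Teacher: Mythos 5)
Your proposal follows the same route as the paper's proof: fix $\ell$ vertices forming an $\ell$-sunflower with kernel of size $s$, count extensions via $\prod_{i=\ell}^{r-1}\binom{n-iw+(i-1)s}{w-s}$, argue the count is maximized at $s=0$, and compare Stirling expansions of that quantity against $D^{\frac{r-\ell}{r-1}}$. Your detour through $\binom{n}{w}^{r-\ell}$ --- first observing the crude bound falls short, then quantifying the shortfall --- is a repackaging of the paper's direct computation of the same exponent, not a genuinely different argument.

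There is an algebra slip at the end. Collecting terms,
\[
r(r-1)-\ell(\ell-1)-r(r-\ell) \;=\; r(\ell-1)-\ell(\ell-1) \;=\; (r-\ell)(\ell-1),
\]
not $\ell(r-\ell+1)$: when you combined $r(r-1)-r(r-\ell)$ you wrote $r\ell$ instead of $r(\ell-1)$ and dropped a $-r$. The conclusion survives, since $(r-\ell)(\ell-1)\ge 1$ for all $2\le\ell\le r-1$ and with $\phi=\exp(-w^2/10n)$ you only need this coefficient to exceed $1/5$, but the stated identity $\ell(r-\ell+1)\ge r$ is not the right inequality. Two housekeeping points. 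First, because $\ell\ge 2$ the kernel is forced to equal $e_i\cap e_j$ for any pair among the fixed sets, so there is never a sum over $s$ --- $s$ is determined by $A$ (you flagged this, but the phrasing ``summing over the $s+1\le w$ choices'' invites confusion). Second, the assertion that $s=0$ maximizes the extension count should be justified rather than postponed; the paper's check is short: writing $g(s)$ for your product, $g(s+1)/g(s)=(1+o(1))/(w-s-1)^{r-\ell}$, which is less than $1$ for $s\le w-3$, so $g(0)$ is the maximum up to negligible factors.
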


\begin{proof}

To bound $\Delta_{\ell}(\hnw),$ we fix $\ell$ sets in $\binom{[n]}{w}$ and bound the number of $r$-sunflowers containing the fixed sets. We assume our fixed sets form an $\ell$-sunflower, since otherwise there is no $r$-sunflower containing them. Letting $s \le w-1$ be the size of the kernel of our fixed $\ell$-sunflower, the number of $r$-sunflowers containing it is 

\be
  \prod\limits_{i=\ell}^{r-1}\binom{n-i w+(i-1)s}{w-s} =  \frac{(n-\ell w+(\ell-1)s)!}{[(w-s)!]^{r-\ell}(n-rw+(r-1)s)!}.
\ee
Letting the above expression be $g(s)$, we see that for $s \le w-2$ we have
\[
\frac{g(s+1)}{g(s)} = \frac{1}{(w-s-1)^{r-\ell}} \cdot \frac{(n-\ell w+(\ell-1)(s+1))_{\ell-1}}{(n-r w+(r-1)(s+1))_{\ell-1}} = \frac{1+o(1)}{(w-s-1)^{r-\ell}}.
\]
Now for $s=w-2$ the above expression is $1+o(1)$, while for $s \le w-3$ it is strictly less than 1. Thus the maximum possible value of $g(s)$ is $\approx g(0)$.

Now $g(0)$ is

\be \label{eq:SF-DeltaL}
\begin{split}
    &\frac{(n-\ell w)!}{(w!)^{r-\ell}(n-rw)!}\\
    &=\operatorname{exp}\left\{ (n-\ell w)\log(n- \ell w) - (r-\ell) w \log w - (n-rw) \log(n-rw)\right\}\\
     &=\operatorname{exp}\left\{w(r-\ell) + w(r-\ell)\log\left(\frac{n}{w}\right) -\frac{w^2(r^2-\ell^2)}{2n}+ o\left(\frac{w^2}{n}  \right)\right\} \eqpd\\
\end{split}
\ee
Now using \eqref{eq:SF-NDest} and \eqref{eq:SF-DeltaL}, we have
\be
\begin{split}
&\frac{\Delta_{\ell}\left(\hnw \right)}{D^{\frac{r-\ell}{r-1}}} \approx \frac{\operatorname{exp}\left\{w(r-\ell) + w(r-\ell)\log\left(\frac{n}{w}\right) -\frac{w^2(r^2-\ell^2)}{2n}+ o\left(\frac{w^2}{n}  \right)\right\}}{\operatorname{exp}\left\{w(r-\ell) + w(r-\ell)\log\left(\frac{n}{w}\right) -\frac{w^2(r+1)}{2(r-\ell) n}+ o\left(\frac{w^2}{n}  \right)\right\}}\\
&= \exp\cbrac{-\frac{w^2}{2n} \rbrac{r^2 - \ell^2 - \frac{r+1}{r-\ell}+o(1)}} \le \phi,
\end{split}
\ee
where the last inequality is because
\[
r^2 - \ell^2 - \frac{r+1}{r-\ell} = (r-\ell)(r+ \ell) - \frac{r+1}{r-\ell} \ge 1 \cdot (r+\ell) - \frac{r+1}{1} = \ell-1 \ge 1.
\]
This justifies the claim.

\end{proof}

Our next claim involves some positive constant $\e$. It suffices to take $\e = 1/2$. 
\begin{claim}\label{clm:SF-largeintersection}
    Consider two vertices $W_0$ and $W_1$ of $\hnw$, and assume that $|W_0 \cap W_1|\le (1-\e)w$. Then the $(r-1)$-codegree of $W_0, W_1$ is at most $\phi D$.
\end{claim}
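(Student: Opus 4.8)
The plan is to write the $(r-1)$-codegree of $W_0,W_1$ as a sum over the kernel size $s$ of the witnessing sunflowers and compare it, term by term, with the formula for $D$ in \eqref{eq:SF-Ddef}. The starting point is a structural observation. Suppose $S=\{V_1,\dots,V_{r-1}\}\subseteq\mathscr{V}_{n,w}$ is such that both $\{W_0\}\cup S$ and $\{W_1\}\cup S$ are $r$-sunflowers, say with kernels $K_0$ and $K_1$. Since $r\ge 3$ there are two distinct $V_i,V_j\in S$, and the sunflower conditions force $K_0=V_i\cap V_j=K_1=:K$; moreover $K=W_0\cap V_1\subseteq W_0$ and $K=W_1\cap V_1\subseteq W_1$, so $K\subseteq W_0\cap W_1$ and hence $s:=|K|\le|W_0\cap W_1|\le(1-\e)w$. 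One checks further that each $V_i$ contains $K$, that $V_i\cap(W_0\cup W_1)=K$, and that the petals $V_i\setminus K$ are pairwise disjoint subsets of $[n]\setminus(W_0\cup W_1)$; conversely, any choice of such a $K$ and such petals yields a valid $S$, and the resulting sets $V_i$ are automatically distinct from each other and from $W_0,W_1$ (the petals are nonempty and $s<w$).

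Consequently, writing $t:=|W_0\cap W_1|$ so that $|W_0\cup W_1|=2w-t$, the number of witnesses $S$ with $|K|=s$ equals
\[
\mathrm{Cod}_s:=\binom{t}{s}\cdot\frac{1}{(r-1)!}\prod_{i=1}^{r-1}\binom{\,n-(2w-t)-(i-1)(w-s)\,}{w-s},
\]
where $\binom ts$ chooses $K$ and the product (divided by $(r-1)!$) counts tuples of pairwise disjoint $(w-s)$-petals in $[n]\setminus(W_0\cup W_1)$. Let $D_s$ denote the $s$-th summand of \eqref{eq:SF-Ddef}. Using the identity $n-iw+(i-1)s=n-w-(i-1)(w-s)$, one finds
\[
\frac{\mathrm{Cod}_s}{D_s}=\frac{\binom ts}{\binom ws}\prod_{i=1}^{r-1}\frac{\binom{M_i-\delta}{w-s}}{\binom{M_i}{w-s}},\qquad M_i:=n-w-(i-1)(w-s),\quad \delta:=w-t\ge\e w.
\]
Now $\binom{M_i-\delta}{w-s}/\binom{M_i}{w-s}=\prod_{j=0}^{w-s-1}(1-\delta/(M_i-j))\le(1-\e w/n)^{w-s}$ by the trivial bound $M_i-j\le n$; since $s\le(1-\e)w$ there are $w-s\ge\e w$ factors in each of the $r-1$ products, and $\binom ts/\binom ws\le 1$, so
\[
\frac{\mathrm{Cod}_s}{D_s}\le\exp\left(-\e^2(r-1)\frac{w^2}{n}\right)\le\exp\left(-\frac{w^2}{2n}\right)\le\exp\left(-\frac{w^2}{10n}\right)=\phi
\]
for $\e=\tfrac12$ and $r\ge 3$. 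Summing over $0\le s\le(1-\e)w$ yields that the $(r-1)$-codegree of $W_0,W_1$ is at most $\sum_s\mathrm{Cod}_s\le\phi\sum_{s=0}^{w-1}D_s=\phi D$.

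The only genuinely non-routine step is the structural observation: because $r\ge 3$, the two sunflowers are forced to share a single common kernel lying in $W_0\cap W_1$, and this is what shrinks the available petal ground set $[n]\setminus(W_0\cup W_1)$ to size at most $n-2w+(1-\e)w$ and produces the saving over $D$. Everything that follows is a term-by-term comparison of the very binomial products that already appear in \eqref{eq:SF-Ddef}, controlled by the elementary inequality $1-x\le e^{-x}$; one should merely keep in mind that in the regime $w=n^\alpha$ with $\alpha<1$ and $r$ fixed we have $rw=o(n)$, so all the binomials in sight are positive and the comparison is unproblematic.
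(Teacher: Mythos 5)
Your proof is correct and takes a genuinely different route to the bound. Both arguments begin with the same structural observation (which you articulate carefully and which the paper states more tersely): because $r\ge 3$, the two sunflowers $\{W_0\}\cup S$ and $\{W_1\}\cup S$ are forced to share one common kernel $K\subseteq W_0\cap W_1$, and the petals of the $V_i$ live in $[n]\setminus(W_0\cup W_1)$, so the codegree decomposes as $\sum_s \mathrm{Cod}_s$ over kernel sizes $s\le t$ with $t=|W_0\cap W_1|$. From there the paths diverge. The paper's proof notes the sum is increasing in $t$, takes $t=(1-\e)w$, locates the dominant summand $F_{s^*}$ by a ratio analysis, and then estimates $F_{s^*}$ via Stirling, comparing the resulting exponential asymptotic against the Stirling estimate for $D$ from Claim~\ref{clm:SF-NDest}. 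Your proof avoids Stirling altogether: you pair $\mathrm{Cod}_s$ with the corresponding summand $D_s$ of \eqref{eq:SF-Ddef}, note that the only change is shrinking each binomial's ground set by $\delta=w-t\ge\e w$, bound the ratio of binomials by $\prod_{j<w-s}(1-\delta/(M_i-j))\le (1-\e w/n)^{\e w}\le e^{-\e^2 w^2/n}$, and sum term-by-term to get $\sum_s\mathrm{Cod}_s\le\phi\,D$. Your approach is more elementary and more self-contained (it never needs the precise asymptotics of $D$ or $F_{s^*}$, only the exact formula \eqref{eq:SF-Ddef}), and it makes the final comparison step explicit where the paper leaves it to the reader; the paper's approach, by contrast, yields a sharp exponential formula for the codegree that could be reused elsewhere but is overkill for this claim. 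One small remark: the paper's displayed formula \eqref{eq:SF-codegree1} omits the $1/(r-1)!$ (it counts ordered petal tuples, an overcount that is harmless for an upper bound), while you correctly keep the factor on both sides so that it cancels cleanly in the ratio $\mathrm{Cod}_s/D_s$.
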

\begin{proof}
 The $(r-1)$-codegree of $W_0, W_1$ is the number of choices for sets $W_2, \ldots W_{r}$ such that both $\{W_0, W_2, \ldots W_{r}\}$ and $\{W_1, W_2, \ldots W_{r}\}$ are sunflowers. Note that the sets $W_2, \ldots, W_r$ cannot intersect $W_1 \setminus W_0$ or $W_0 \setminus W_1$, and so our two sunflowers must have a common kernel $K \subseteq W_0 \cap W_1$. Letting $t:=|W_0 \cap W_1|\le (1-\e)w$ and $s=|K|\le t$, the number of choices for $W_2, \ldots W_{r}$ is
\begin{align}\label{eq:SF-codegree1}
    \begin{split}
     \sum_{s=0}^{t} \binom ts \prod\limits_{i=1}^{r-1} \binom{n-2w+t - (i-1)(w-s)}{w-s} = \sum_{s=0}^{t}  \frac{t!(n-2w+t)!}{s!(t-s)![(w-s)!]^{r-1}[(n+t-(r+1)w+(r-1)s]!}\\
    \end{split}.
\end{align}
The above is increasing in $t$ (easier to see if we look at the expression on the left),  so we will henceforth assume that $t=(1-\e)w$. Let $F_s$ be the $s$ term of the sum above. Then (re-defining the function $R(s)$ so it no longer means the same as it did in the proof of Claim \ref{clm:SF-NDest})
\begin{align*}
   R(s):= \frac{F_{s+1}}{F_s} = \frac{(t-s)(w-s)^{r-1}}{(s+1)\sbrac{n+t-(r+1)w+(r-1)(s+1)}_{r-1}}.
\end{align*}
Similar to the proof of Claim \ref{clm:SF-NDest}, we can see that the maximum possible value of $F_s$ occurs when $s=s^*$ where $s^*= 0$ if $\alpha \le \frac{r-1}{r}$ and $s^* \approx \frac{tw^{r-1}}{n^{r-1}} = (1-\e)n^{r\alpha - r + 1}$ if $\alpha > \frac{r-1}{r}$. Now

\begin{align}
       F_{s^*} & = \frac{t!(n-2w+t)!}{s^*!(t-s^*)![(w-s^*)!]^{r-1}[n+t-(r+1)w+(r-1)s^*]!}\nn\\
       & = \frac{\rbrac{\frac{t}{e}}^{t} \rbrac{\frac{n-2w+t}{e}}^{n-2w+t}}{\rbrac{\frac{s^*}{e}}^{s^*} \rbrac{\frac{w-s^*}{e}}^{w-s^*} \rbrac{\frac{n+t-(r+1)w+(r-1)s^*}{e}}^{n+t-(r+1)w+(r-1)s^*}} \exp\{O(\log n)\}\nn\\
       & = \exp \bigg\{ t \log t + (n-2w+t) \log (n-2w+t) - s^* \log s^* - (w-s^*) \log (w-s^*)\nn\\
       & \qquad - [n+t-(r+1)w+(r-1)s^*)] \log[n+t-(r+1)w+(r-1)s^*] +O(\log n)\bigg\}\nn\\
       & = \exp \bigg\{ t \log t + (n-2w+t) \sbrac{\log n - \frac{2w-t}{n} - \frac{(2w-t)^2}{2n^2} + O\rbrac{\frac{w^3}{n^3}}} - s^* \log s^*\nn \\
       & \qquad - (w-s^*) \sbrac{\log w + O\rbrac{\frac{s^*}{w}}}- [n+t-(r+1)w+(r-1)s^*)] \bigg[ \log n - \frac{(r+1)w-t-(r-1)s^*}{n} \nn\\
       & \qquad -\frac{[(r+1)w-t-(r-1)s^*]^2}{2n^2} +O\rbrac{\frac{w^3}{n^3}} \bigg]+O(\log n)\bigg\}\nn\\
&=\operatorname{exp}\left\{w(r-1)+ w(r-1)\log\left(\frac{n}{w}\right) -\frac{w^2(r^2+2\e(r-1)-1)}{2n} +o\left(\frac{w^2}{n}  \right)\right\}.\nn
\end{align}

This completes the proof of the claim.

\end{proof}
Our next claim verifies Condition \eqref{cond:Bbound}.
\begin{claim}\label{clm:SF-bbound}
        For a fixed vertex $W \in \mathscr{E}_{w,r}$,  we have
        $ |B(W)| \leq \phi D^\frac{1}{r-1}.$
        
\end{claim}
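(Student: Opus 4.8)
The goal is to bound $|B(W)|$, the number of vertices $W'$ forming a bad pair with $W$, i.e.\ those $W'$ whose $(r-1)$-codegree with $W$ is at least $\phi D$. The key leverage is Claim \ref{clm:SF-largeintersection}: any $W'$ with $|W\cap W'|\le (1-\e)w$ has $(r-1)$-codegree at most $\phi D$, hence is \emph{not} in $B(W)$. Therefore $B(W)$ is contained in the set of $W'\in\binom{[n]}{w}$ with $|W\cap W'| > (1-\e)w$, and it suffices to count those. The plan is to bound this count directly and show it is at most $\phi D^{1/(r-1)}$, which in turn (by the value $D^{1/(r-1)} = \exp\{w + w\log(n/w) - \tfrac{w^2(r+1)}{2(r-1)n} + o(w^2/n)\}$ from Claim \ref{clm:SF-NDest}) is a quantity of order roughly $N\cdot\exp\{-\Theta(w^2/n)\}$.

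First I would write the crude bound: the number of $w$-sets $W'$ with $|W\cap W'|\ge (1-\e)w$ is
\[
\sum_{j \ge (1-\e)w} \binom{w}{j}\binom{n-w}{w-j} \le w\max_{j\ge(1-\e)w}\binom{w}{j}\binom{n-w}{w-j}.
\]
With $\e = 1/2$ the dominant term is around $j = w/2$ (one checks the ratio of consecutive terms to locate the max on the relevant range), giving a bound of the form $\binom{w}{w/2}\binom{n-w}{w/2}\exp\{O(\log n)\}$. Then I would apply Stirling exactly as in the proof of Claim \ref{clm:SF-NDest}: $\binom{w}{w/2} = \exp\{w\log 2 + O(\log w)\}$ and $\binom{n-w}{w/2} = \exp\{\tfrac{w}{2}\log(n/w) + \tfrac{w}{2} + O(w^2/n) + O(\log n)\}$, so the whole count is $\exp\{\tfrac{w}{2}\log(n/w) + O(w)\}$. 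Comparing with $D^{1/(r-1)} = \exp\{w\log(n/w) + O(w)\}$, the ratio is $\exp\{-\tfrac{w}{2}\log(n/w)(1+o(1))\}$, which is much smaller than $\phi = \exp\{-w^2/(10n)\}$ since $w\log(n/w)$ dominates $w^2/n$ in the regime $\alpha < 1$ (indeed $w/n = n^{\alpha-1} = o(1)$ while $\log(n/w) \to\infty$). Hence $|B(W)| \le \phi D^{1/(r-1)}$.

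The main obstacle — really the only place requiring care — is pinning down where the maximum of $\binom{w}{j}\binom{n-w}{w-j}$ occurs on the range $j \ge (1-\e)w$ and confirming the resulting bound is genuinely smaller than $\phi D^{1/(r-1)}$ rather than merely comparable. The ratio of consecutive summands is $\tfrac{\binom{w}{j+1}\binom{n-w}{w-j-1}}{\binom{w}{j}\binom{n-w}{w-j}} = \tfrac{(w-j)(w-j)}{(j+1)(n-2w+j+1)}$, which for $j$ near $w$ is tiny (of order $w^2/n$ or smaller), so the sum is dominated by its smallest index $j = \lceil(1-\e)w\rceil$; with $\e=1/2$ that means $j\approx w/2$, and $\binom{w}{w/2}\binom{n-w}{w/2}$ as estimated above. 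One should double-check that even the largest term in the tail (at $j = w/2$) is small enough; since $\binom{w}{w/2}\binom{n-w}{w/2} = \exp\{w\log 2 + \tfrac{w}{2}\log(n/w) + O(w)\}$ and $D^{1/(r-1)}\phi = \exp\{w\log(n/w) - \Theta(w^2/n) + O(w)\}$, the needed inequality $\tfrac{w}{2}\log(n/w) + O(w) \le w\log(n/w) - \Theta(w^2/n)$ holds comfortably because $\tfrac{w}{2}\log(n/w) \gg w^2/n$ and $\gg w$ in this regime. This completes the argument.
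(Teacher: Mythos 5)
Your proposal is correct and follows essentially the same route as the paper: use Claim \ref{clm:SF-largeintersection} to reduce $|B(W)|$ to the number of $w$-sets $W'$ with $|W\cap W'|>(1-\e)w$, bound that count by $\exp\{\e w\log(n/w)+O(w)\}$, and compare against $D^{1/(r-1)}=\exp\{w\log(n/w)+O(w)\}$ to conclude the ratio is far below $\phi$. The only cosmetic difference is that you locate the maximum term of $\sum_j\binom wj\binom{n-w}{w-j}$ and apply Stirling, whereas the paper just invokes $\binom ab\le(ea/b)^b$ on $\binom{w}{\e w}\binom{n}{\e w}$; both yield the same exponent.
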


\begin{proof}
By Claim \ref{clm:SF-largeintersection}, $|B(W)|$ is at most the number of $w$-sets $W'$ with $|W \cap W'|> (1-\e)w$. Using the bound $\binom ab \le \rbrac{\frac {ea}{b}}^b$, we have that 
\[
   |B(W)| \le  \binom{w}{(1-\e)w}\binom{n}{\e w} =\binom{w}{\e w}\binom{n}{\e w}\le \rbrac{\frac {ew}{\e w}}^{\e w} \rbrac{\frac {en}{\e w}}^{\e w} = \exp \cbrac{\e w \log \rbrac{\frac nw} + O(w)}.
\]
Now using Claim \ref{clm:SF-NDest}, we have
\[
\frac{|B(W)|}{D^\frac{1}{r-1}} \le \frac{\exp \cbrac{\e w \log \rbrac{\frac nw} + O(w)}}{\operatorname{exp}\left\{w + w\log\left(\frac{n}{w}\right) -\frac{w^2(r+1)}{2n} +o\left(\frac{w^2}{n} \right)\right\}} \le \exp\cbrac{-(1-\e)w \log \rbrac{\frac nw} + O(w)} = o(\phi),
\]
so the claim holds with room to spare.

\end{proof}
Now we verify Condition \eqref{cond:ddubprime}. Since $\hnw$ is $r$-regular it suffices to check the case $b=r$.
\begin{claim}
    $d^{\prime\prime}_{A \uparrow r}(\hnw) < \phi D^{\frac{r-|A|}{r-1}}$ for all $A \subseteq V(\hnw)$ with $1 \le |A| < r$.
\end{claim}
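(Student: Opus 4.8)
The plan is to show that every $r$-sunflower counted by $d^{\prime\prime}_{A\uparrow r}(\hnw)$ must have a large kernel, and then to count such sunflowers with a crude Stirling estimate entirely parallel to the proofs of Claims~\ref{clm:SF-Delta} and~\ref{clm:SF-bbound}. First I would dispose of the trivial cases: since $\hnw$ is $r$-uniform, $d^{\prime\prime}_{A\uparrow b}=0$ for $b<r$, so only $b=r$ is at issue; and if $|A|=r-1$ there is no room for two petals of a sunflower lying outside $A$, so $d^{\prime\prime}_{A\uparrow r}=0$ and the bound holds. So assume $1\le a:=|A|\le r-2$, and assume the sets of $A$ form an $a$-sunflower (otherwise no $r$-sunflower contains $A$ and $d^{\prime\prime}_{A\uparrow r}=0$). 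Now suppose $e=\{W_1,\dots,W_r\}$ is an $r$-sunflower with $A\subseteq e$ that contains a bad pair $v,v'\in e\setminus A$. Since $v$ and $v'$ are petals of $e$, we have $v\cap v'=K$, the kernel of $e$; writing $s=|K|$, Claim~\ref{clm:SF-largeintersection} (whose proof in fact yields a codegree bound of $o(\phi D)$, hence strictly below $\phi D$) shows that if $s=|v\cap v'|\le(1-\e)w$ then $v,v'$ is not bad. Consequently every sunflower counted by $d^{\prime\prime}_{A\uparrow r}$ has kernel size $s>(1-\e)w$, i.e.\ $w-s<\e w=w/2$.

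Next I would count these sunflowers. For a fixed kernel $K$ with $|K|=s$, the number of ways to extend $A$ to an $r$-sunflower with kernel $K$ is $\prod_{i=a}^{r-1}\binom{n-iw+(i-1)s}{w-s}\le\binom{n}{w-s}^{r-a}$, and since $w-s<w/2\ll n/2$ this is at most $\bigl(\tfrac{en}{w-s}\bigr)^{(w-s)(r-a)}\le\exp\{\tfrac{r-a}{2}w\log(n/w)+O(w)\}$. If $a\ge2$ the kernel $K$ is forced to equal the common pairwise intersection of the sets of $A$ (so either it has size $>(1-\e)w$ and there is one choice, or $d^{\prime\prime}_{A\uparrow r}=0$); if $a=1$ there are at most $\binom{w}{s}=\binom{w}{w-s}=\exp\{O(w)\}$ choices of $K$ of each size. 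Summing over the at most $w$ admissible values of $s$ (a negligible factor), I obtain
\[
d^{\prime\prime}_{A\uparrow r}(\hnw)\ \le\ \exp\Bigl\{\tfrac{r-a}{2}\,w\log(n/w)+O(w)\Bigr\}.
\]

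Finally I would compare this with the target. From Claim~\ref{clm:SF-NDest}, $D^{1/(r-1)}=\exp\{w+w\log(n/w)-\tfrac{w^2(r+1)}{2n}+o(w^2/n)\}$ and $\phi=\exp\{-w^2/(10n)\}$, so (using $r-a=O(1)$)
\[
\phi D^{\frac{r-a}{r-1}}\ \ge\ \exp\Bigl\{(r-a)\,w\log(n/w)+(r-a)w-O(w^2/n)\Bigr\}.
\]
Hence $d^{\prime\prime}_{A\uparrow r}(\hnw)\big/\bigl(\phi D^{(r-a)/(r-1)}\bigr)\le\exp\{-\tfrac{r-a}{2}w\log(n/w)+O(w)+O(w^2/n)\}$. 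Since $r-a\ge2$, and since $w=n^\alpha$ with $\alpha\in(\tfrac12,1)$ makes $w\log(n/w)=(1-\alpha)n^\alpha\log n$, which is $\omega(w)$ and also $\omega(w^2/n)=\omega(n^{2\alpha-1})$, the exponent tends to $-\infty$; so the ratio is $o(1)<1$ and the claim follows. The only genuinely delicate point is the first step --- that a bad pair of petals forces $s>(1-\e)w$ --- which is handled with ample room by the estimate inside the proof of Claim~\ref{clm:SF-largeintersection}; everything after that is a deliberately lossy entropy count of exactly the kind already used in this appendix.
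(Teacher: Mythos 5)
Your proof is correct and starts from the same essential observation as the paper's: a bad pair of petals in an $r$-sunflower forces the kernel to have size $>(1-\e)w$ (by Claim~\ref{clm:SF-largeintersection}), so all sunflowers that $d''_{A\uparrow r}$ counts have a large kernel. The two proofs then diverge in how they count such sunflowers. The paper observes that once the kernel has size $>(1-\e)w$, \emph{every} pair of sets in the sunflower has pairwise intersection $>(1-\e)w$, so each of the $r-|A|$ petals outside $A$ lies among the at most $\phi D^{1/(r-1)}$ sets that heavily overlap a fixed member of the sunflower (this count is the heart of the proof of Claim~\ref{clm:SF-bbound}), giving $d''_{A\uparrow r}\le(\phi D^{1/(r-1)})^{r-|A|}\le\phi D^{(r-|A|)/(r-1)}$ in one line. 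You instead perform a direct Stirling-type entropy count of the number of ways to attach $r-a$ petals of a sunflower with a large kernel, then compare to $\phi D^{(r-a)/(r-1)}$ via Claim~\ref{clm:SF-NDest}. Both routes are valid and require $r-a\ge2$; the paper's is shorter because it leans on Claim~\ref{clm:SF-bbound} which was already proved, while yours is self-contained and makes the mechanism more explicit. Your dispatch of $b<r$ and $|A|=r-1$, your separation of the cases $a\ge2$ (kernel forced) versus $a=1$ (at most $\binom{w}{s}=\exp\{O(w)\}$ kernels), and your final asymptotic comparison using $w\log(n/w)=\omega(w)$ and $w\log(n/w)=\omega(w^2/n)$ for $\alpha\in(\tfrac12,1)$ are all correct.
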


\begin{proof}
    Fix our set $A$, with $1 \le |A| < r$. Suppose some $r$-sunflower contains $A$ and some bad pair $W, W' \notin A$. Then $|W \cap W'| \ge (1-\e)w$ by Claim \ref{clm:SF-largeintersection}, and so the kernel of our sunflower has size at least $(1-\e)w$. In other words, every pair of sets in this sunflower is a bad pair. Now using Claim \ref{clm:SF-bbound}, the number of such sunflowers is at most 
    \[
    \rbrac{\phi D^\frac{1}{r-1}}^{r-|A|} \le \phi D^{\frac{r-|A|}{r-1}}
    \]
    as required. 
\end{proof}

We have now checked all the conditions of Theorem \ref{thm:ind}, and so applying the theorem guarantees that the sunflower-free process gives us a family of size at least
\[
\Omega\left(N \cdot\left(\frac{\log 1/\phi}{D}\right)^{\frac{1}{r-1}}\right)  = \Omega \rbrac{(w^2/n)^\frac{1}{r-1} ND^{-\frac{1}{r-1}}}
\]
asymptotically almost surely. This completes the proof of Theorem \ref{thm:SunflowerTheorem}.

\section{Thresholds} 
For completeness, we include proofs for Propositions \ref{prop:Hnpw} and \ref{prop:SpreadnessThreshold}.

\begin{proof}[Proof of Proposition \ref{prop:Hnpw}]
We apply the first moment method to obtain the 0-statement, and the second moment method for the 1-statement. Let $X_r$ be the number of $r$-sunflowers in $\hnpw.$ Since the number of edges in $\hnw$ is $ND/r$, the expected number of $r$-sunflowers in $\hnpw$ is
    \begin{align}
    \begin{split}
        \E[X_r] &= \frac 1r \binom nw p^rD = \frac 1r N D p^r.
    \end{split}  
    \end{align}
Thus if we have $p =o\left(ND\right)^{-1/r}$ then 
\[
\Pr(X_r > 0) \le \E[X_r] =  o(1).
\]
  Now suppose $p =\omega\left(ND\right)^{-1/r}$. We have
\be \label{eq:2ndmoment1}
        \E[X_r^2] = \sum_{(S, S')} \Pr[S] \Pr[S' | S]
        \ee
        where we sum over all pairs of sunflowers $(S, S')$, $\Pr[S]=p^r$ is the probability that $S$ is in our random collection, and $\Pr[S' | S]$ is the conditional probability of $S'$ given $S$. If $|S \cap S'|=j$ (so they share $j$ many $w$-sets), then $\Pr[S' | S]=p^{r-j}$. Picking back up from \eqref{eq:2ndmoment1}, we get (explanation follows)
\begin{align}
\begin{split}
       \E[X_r^2] &= \sum_{S} p^r \left[ \sum_{j=0}^r \sum_{\substack{S' \\ |S' \cap S| =j}} p^{r-j} \right]  \\
       & \le \sum_{S} p^r \left[\frac 1r N D p^r  + rD p^{r-1} + \sum_{j=2}^{r-1} \binom rj \phi D^\frac{r-j}{r-1} p^{r-j}  + 1\right] \\
       &= (1+o(1)) \left(\frac 1r N D p^r \right)^2.
\end{split}
\end{align}
The first line follows from the discussion above. For the second line it will help to recall the facts we proved in Appendix \ref{sec:SF-free}. We bound the $j=0$ term using the fact that the total number of choices for $S'$ is at most $ND/r$. For $j=1$ there are at most $rD$ choices for $S'$ since we choose one of $r$ edges of $S$ to share, and then complete $S'$ given this one edge. For $2 \le j \le r-1$ we use Claim \ref{clm:SF-Delta} to conclude that the number of choices for $S'$ is at most $\phi D^\frac{r-j}{r-1}.$ Finally for $j=r$ of course we have only the choice $S'=S$. Now to justify the third line, we argue that the first term in the brackets dominates the rest. Indeed, the second term is negligible since $Np =\omega(1)$. For each term $2 \le j \le r-1$ we have $\phi D^\frac{r-j}{r-1} p^{r-j} = o(NDp^r)$ since $p^j = \omega(\phi D^\frac{j-1}{r-1}/N)$. Finally of course the 1 is negligible since we chose $p$ so that $N D p^r = \omega(1)$. This completes the proof of the proposition.

\end{proof}

The proof of Proposition \ref{prop:SpreadnessThreshold} uses the celebrated result of Frankston, Kahn, Narayanan, and Park \cite{FKNP} in the style presented in \cite{frieze2016introduction}.
\begin{proof}[Proof of Proposition \ref{prop:SpreadnessThreshold}]
Let $\mathcal{K}_{n,w}$ denote the set of all possible subsets of $[n]$ of size $w$, and recall that $\mathscr{H}_{n,w}$ is the hypergraph with vertex set $\binom{[n]}{w}$ and with an edge of size $r$ for each $r$-sunflower with kernel of size $t$ in $\mathcal{K}_{n,w}$. In other words, $\mathscr{H}$ is $r$-bounded and has 
\begin{equation}
    \binom{n}{t} \binom{n-t}{r(w-t)}\psi\left(r(w-t),r, w-t\right)
\end{equation}

edges where $\psi(rs, r, s) \coloneqq \frac{(rs)!}{r!(s!)^r}$. 

The core of the proof is showing that $\mathscr{H}$ $\kappa$-spread for suitable $\kappa$. Let 
\[
\langle S\rangle \coloneqq \{T: S \subseteq T \subseteq V(\mathscr{H})\}
\]
denote the subsets containing $S$.
If we choose an arbitrary subset of the vertices $S= \{x_1, x_2 , \dots x_s\}$ of size $s$ then $E(\mathscr{H}) \cap \langle S\rangle = \emptyset$ unless $s \leq r$  and $\forall i, j \in [s] \ e_i \cap e_j = T$ for some $|T|=t$, and $e_i$ and $e_j$ are the hyperedges denoted by $x_i$ and $x_j$.

Using that, for any $n$ $\sqrt{2 \pi n}\left(\frac{n}{e}\right)^n e^{\frac{1}{12 n+1}}<n!<\sqrt{2 \pi n}\left(\frac{n}{e}\right)^n e^{\frac{1}{12 n}}$ \cite{robbins}, we have
\begin{align}
    \begin{split}
        \frac{\left| E(\mathscr{H}) \cap \langle S\rangle \right|}{\left| E(\mathscr{H})\right|} &=\frac{\binom{n-(s(w-t)+t)}{(r-s)(w-t)}\psi((r-s)(w-t), (r-s), (w-t))}{\binom{n}{t} \binom{n-t}{r(w-t)}\psi\left(r(w-t),r, w-t\right)}\\
        &=\frac{(n-(s(w-t)-t)!((w-t)!)^st!r!}{n!(r-s)!}\\
        &\leq e^{s(w-t-1)}\left(\frac{(w-t)!^s(r-s)^s}{n^{s(w-t)}}\right) \left[\frac{t^{t+1/2}r^{r+1/2}\sqrt{2\pi}\cdot e^{\frac{1}{12n}}}{n^{t+1/2}(r-s)^{r+1/2}}\right]\\
        &\leq e^{s(w-t-1)}\left(\frac{(w-t)!^s(r-s)^s}{n^{s(w-t)}}\right) \left[\frac{n^{t+1/2}r^{r+1/2}\sqrt{2\pi}}{n^{t+1/2}(r-s)^{r+1/2}}\right]\\
        &\leq c^se^{s(w-t-1)}\left(\frac{(w-t)!^sr^s}{n^{s(w-t)}}\right)\\
    \end{split}
\end{align}
for some constant $c>1$.
Thus, $\mathscr{H}$ is

\begin{equation}
    \kappa = \left(\frac{n^{(w-t)}}{ce^{(w-t-1)}(w-t)!r}\right)
\end{equation}
    spread.  Moreover 

\begin{align}
    \begin{split}
        \frac{\binom{n}{w}}{\kappa} &= ce^{(w-t-1)}\left(\frac{(w-t)!r}{n^{(w-t)}}\right) \frac{n!}{w!(n-w)!}\\
        &\leq ce^{(w-t-1)}\left(\frac{w!r n^t}{n^{w}}\right) \frac{n^{n+w+\frac{1}{2}}}{w!(n-w)^{n+\frac{1}{2}}}\\
        &\leq ce^{(w-t-1)}rn^t \frac{n^{n+\frac{1}{2}}}{(n-w)^{n+\frac{1}{2}}}\\
        &\leq ce^{(2w-t)}rn^t\\
    \end{split}
\end{align}

Therefore, applying \cite{FKNP}  gives that if $m= Ke^{2w-t}rn^t\log r$ for some absolute constant $K$, then $\mathbb{H}_{n,m,w}$ contains  a sunflower with kernel size $t$ w.h.p.. In particular notice that this value is $Ke^{2w-t}r\log r$ for a matching of size $r$.
\end{proof}

\section{Analytic considerations} \label{sec:errorfunctions}
In this section we discuss our deterministic functions $q, s_\ell^{\pm}, f_V, f_\ell$ and the choice of constants. 

Before our next lemma, recall from \eqref{eq:q}, \eqref{eq:s_ell} and \eqref{eq:i_max} that 
\[
q(t)=e^{-t^{r-1}}, \qquad s_{\ell}(t)=\binom{r-1}{\ell-1} D^{\frac{\ell-1}{r-1}} t^{r-\ell} q^{\ell-1}, \qquad t_{max}= \zeta \log^\frac{1}{r-1}(1/\phi).
\]

Our error functions will have the form
\be\label{eq:settingVariation1}
\begin{gathered}
f_{\ell}=\left(1+t^{r-\ell+2}\right) q^{\ell} \exp \left(\alpha t+\beta t^{r-1}\right)   \\
f_V=\left(1+t^2\right)  q^2 \exp \left(\alpha t+\beta t^{r-1}\right) .
\end{gathered}
\ee

\begin{lemma}\label{lem:deterministicbounds}
    For any fixed $r \ge 2$, $\delta = 1/10$ and $\lambda=1/100r$ there exist positive constants $\zeta, \alpha, \beta$   satisfying the following for all $t \in [0, t_{max}]$.
        \begin{enumerate}
        \item \[f_V^{\prime}>3 f_2\]
        \item  \[f_\ell' > 5\ell q^{-1}f_{\ell+1}\]
        \item  \[f_\ell' > 2 \ell \binom{r-1}{\ell}t^{r-\ell-1}q^{\ell-2}f_V\]
        \item \[f_{\ell}^{\prime}>7(\ell-1)\binom{r-1}{\ell-1} t^{r-\ell} q^{\ell-2}f_2 \]
        \item \[f_{\ell}^{\prime}>6(\ell-1)(r-1) t^{r-2} f_{\ell}\]
        \item \[f_{\ell}^{\prime}>3(\ell-1)(r-1)\binom{r-1}{\ell-1} t^{2 r-\ell-2} q^{\ell-2} f_V\]
        \item  \[\phi^\delta f_V = o(q)\]
        \item  \[q - \phi^\delta f_V \ge \phi^\lambda \]
        \item  \[q', q'' = O\rbrac{1}\]
        \item  \[s_\ell, (s_\ell^\pm)', (s_\ell^\pm)'' = O\rbrac{D^{\frac{\ell-1}{r-1}}}\]
        \item \[f_\ell, f_\ell', f_\ell'' = O(\phi^{-\lambda})\]
        \item \[f_V, f_V', f_V'' = O(\phi^{-\lambda})\]
    \end{enumerate}
\end{lemma}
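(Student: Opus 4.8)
\textbf{Proof proposal for Lemma \ref{lem:deterministicbounds}.}

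The plan is to choose the constants in a specific order so that each inequality is either immediate from the structure of the functions, or follows from a slight strengthening of an earlier choice. First I would fix $\delta = 1/10$ and $\lambda = 1/(100r)$ as prescribed, and then choose $\alpha, \beta$ to be large positive constants (depending only on $r$), and finally choose $\zeta$ small (depending on $\alpha, \beta, r$). The key observation making (1)--(6) work is that differentiating $f_\ell = (1+t^{r-\ell+2})q^\ell \exp(\alpha t + \beta t^{r-1})$ produces a term $\alpha f_\ell$ plus lower-order-in-$t$ terms, since $\exp(\alpha t + \beta t^{r-1})$ has derivative $(\alpha + \beta(r-1)t^{r-2})\exp(\alpha t + \beta t^{r-1})$; similarly the $q^\ell = e^{-\ell t^{r-1}}$ factor contributes $-\ell(r-1)t^{r-2} f_\ell$, which is why we need $\beta$ large to dominate it and keep $f_\ell' > 0$. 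So $f_\ell' \ge \alpha f_\ell - (\text{stuff bounded on } [0,t_{\max}])$, and on the compact interval $[0, t_{\max}]$ with $t_{\max} = \zeta \log^{1/(r-1)}(1/\phi)$ all the polynomial-in-$t$ and $q$-power factors are bounded by absolute constants once $\zeta$ is bounded; thus taking $\alpha$ large enough beats any fixed multiple of $f_{\ell+1}$, $f_2$, $f_V$, etc. appearing on the right of (1)--(6). I would carry out (1)--(6) by writing each right-hand side explicitly as $C_r \cdot t^{a} q^{b} \exp(\alpha t + \beta t^{r-1})$ for appropriate nonnegative $a,b$ and the left-hand side as $\ge \alpha \cdot t^{a'} q^{b'}\exp(\alpha t+\beta t^{r-1})$ (matching or dominating powers), then checking the ratio is $\le$ some constant over $\alpha$; the only subtlety is making sure the powers of $t$ and $q$ on the left dominate those on the right, which one verifies by the exponent bookkeeping (e.g. in (2), $f_\ell'$ has a summand $\beta(r-1)t^{r-2}(1+t^{r-\ell+2})q^\ell e^{\cdots}$, and $q^{-1}f_{\ell+1} = (1+t^{r-\ell+1})q^{\ell} e^{\cdots}$, so dividing gives something $\le C_r/(\beta t^{r-2})$ near $t=0$ — here one uses instead the $\alpha f_\ell$ summand together with the factor $t^{r-\ell+2}$ vs.\ $t^{r-\ell+1}$, i.e.\ the constant term $1$ in $f_\ell'$ beats the constant term in $q^{-1}f_{\ell+1}$ after multiplying by $\alpha$).

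For (7) and (8): since $f_V = (1+t^2)q^2 e^{\alpha t + \beta t^{r-1}} \le (1+t_{\max}^2)q^2 e^{\alpha t_{\max} + \beta t_{\max}^{r-1}}$ and $q = e^{-t^{r-1}}$, we have $\phi^\delta f_V / q \le \phi^\delta \cdot (1+t_{\max}^2) e^{\alpha t_{\max}} q \cdot e^{\beta t_{\max}^{r-1}}$. Using $t_{\max}^{r-1} = \zeta^{r-1}\log(1/\phi)$ and $q(t) \le 1$, this is at most $\phi^{\delta} \cdot \mathrm{poly}\log(1/\phi) \cdot \phi^{-\beta\zeta^{r-1}} \cdot e^{\alpha t_{\max}}$, and since $e^{\alpha t_{\max}} = e^{\alpha \zeta \log^{1/(r-1)}(1/\phi)} = \phi^{-o(1)}$, choosing $\zeta$ small enough that $\beta \zeta^{r-1} < \delta/2$ gives $\phi^\delta f_V/q \le \phi^{\delta/2 - o(1)} = o(1)$, proving (7). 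For (8), $q - \phi^\delta f_V \ge q(1 - o(1)) \ge \tfrac12 q \ge \tfrac12 e^{-t_{\max}^{r-1}} = \tfrac12 \phi^{\zeta^{r-1}}$ (this is \eqref{eq:qtMax}), and choosing $\zeta$ small enough that $\zeta^{r-1} < \lambda$ makes this $\ge \phi^\lambda$ for $\phi$ small. Items (9)--(12) are boundedness statements on a compact interval: $q, q', q''$ are continuous functions of $t$ with $q$ bounded, $q' = -(r-1)t^{r-2}q$, $q'' = -(r-1)(r-2)t^{r-3}q + (r-1)^2 t^{2r-4}q$, all bounded on $[0,t_{\max}]$ by constants once $\zeta$ (hence $t_{\max}$) is bounded — here I would just note $t_{\max} \to 0$ is false (it grows), but $t_{\max} = o(\log^{1/(r-1)}(1/\phi))$ with $\zeta$ a fixed small constant, so actually $t_{\max}$ is bounded only if $\phi$ is bounded away from $0$; instead I claim $q', q'' = O(1)$ in the sense of bounded by an absolute constant depending on $r,\zeta$, which holds because $t^{r-2}q(t) = t^{r-2}e^{-t^{r-1}}$ is a bounded function of $t \in [0,\infty)$ (it tends to $0$ as $t \to \infty$), and likewise for the $q''$ expression. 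For (10), $s_\ell = \binom{r-1}{\ell-1}D^{(\ell-1)/(r-1)}t^{r-\ell}q^{\ell-1}$ and $t^{r-\ell}q^{\ell-1}$ is bounded over $t \ge 0$ since $\ell \ge 2$, so $s_\ell = O(D^{(\ell-1)/(r-1)})$; then $(s_\ell^\pm)'$ and $(s_\ell^\pm)''$ are computed from \eqref{eq:sellpmdef} (e.g.\ $(s_\ell^+)' = D^{-1/(r-1)}\ell s_{\ell+1}/q = \ell\binom{r-1}{\ell} D^{(\ell-1)/(r-1)} t^{r-\ell-1}q^{\ell-1}$, again with a bounded $t$-$q$ factor, and one more derivative stays bounded), giving (10). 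Items (11) and (12) follow the same template as (7): $f_\ell, f_\ell', f_\ell''$ are each of the form $(\text{polynomial in }t)\cdot(\text{power of }q)\cdot e^{\alpha t + \beta t^{r-1}}$, and on $[0,t_{\max}]$ this is at most $\mathrm{poly}\log(1/\phi)\cdot \phi^{-\beta\zeta^{r-1}}\cdot e^{\alpha t_{\max}} \le \phi^{-\beta\zeta^{r-1} - o(1)}$, which is $O(\phi^{-\lambda})$ once $\zeta$ is chosen so that $\beta\zeta^{r-1} < \lambda/2$ (note the $e^{\alpha t_{\max}} = \phi^{-o(1)}$ absorbs into the remaining $\lambda/2$).

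I expect the main obstacle to be the simultaneous consistency of the constraints in (1)--(6): each of these wants $\alpha$ large relative to constants that may themselves depend on $\beta$ (through the $\beta(r-1)t^{r-2}$ terms in the derivatives), while (7)--(8) and (11)--(12) want $\beta\zeta^{r-1}$ and $\zeta^{r-1}$ small relative to $\delta$ and $\lambda$, and the $e^{\alpha t_{\max}} = e^{\alpha\zeta\log^{1/(r-1)}(1/\phi)}$ factor wants $\alpha\zeta$ essentially negligible. The resolution — and the reason the order of quantification matters — is: choose $\beta$ first (large enough, depending only on $r$, that $f_\ell' > 0$ throughout and that the $q^\ell$-decay is dominated); then choose $\alpha$ (large enough, depending on $\beta$ and $r$, for all of (1)--(6)); then choose $\zeta$ last (small enough, depending on $\alpha, \beta, r, \delta, \lambda$, for (7), (8), (11), (12) and for all the ``$t_{\max}$ bounded'' claims in (9)--(10)). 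Since $\zeta$ is chosen after $\alpha$ and the only place $\alpha$ interacts with $\zeta$ is through $e^{\alpha t_{\max}} = \phi^{-\alpha\zeta \cdot o_\phi(1)}$... actually $\alpha t_{\max} = \alpha\zeta\log^{1/(r-1)}(1/\phi)$, and $\log^{1/(r-1)}(1/\phi) = o(\log(1/\phi))$, so $e^{\alpha t_{\max}} = \phi^{-o(1)}$ regardless of how large the fixed constant $\alpha\zeta$ is — this is the crucial point that decouples $\alpha$ from the $\phi$-power bookkeeping and makes the nested choice go through. I would present the argument by first stating this hierarchy of choices explicitly, then verifying each item; (1)--(6) are a finite check of polynomial/exponential inequalities on $[0,t_{\max}]$ reducible to ``$\alpha$ beats a constant,'' and the rest are the $\phi$-power estimates just sketched.
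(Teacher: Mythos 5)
Your treatment of items (7)--(12) matches the paper's argument almost exactly: $q',q''$ and the $s_\ell$-family are products of polynomials in $t$ with powers of $q$ (hence bounded on $[0,\infty)$), while $f_\ell,f_V$ and their derivatives are $\mathrm{poly}(t)\cdot\exp(\alpha t+(\beta-\ell)t^{r-1})$-type expressions bounded by $\phi^{-\lambda}$ once $\zeta$ is taken small. The one genuine difference is items (1)--(6): the paper simply observes these are the variation equations from the earlier Bennett--Bohman note and cites that reference, whereas you sketch a direct verification. That sketch is broadly on the right track but the division of labor you describe --- "choose $\beta$ first just so $f_\ell'>0$, then $\alpha$ large for \emph{all} of (1)--(6)" --- is not quite accurate. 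Inequalities (4), (5), (6) have right-hand sides carrying a factor like $t^{r-2}f_\ell$ or $t^{2r-\ell-2}q^{\ell-2}f_V$ whose $t$-power matches or exceeds that of the $\alpha(1+t^{r-\ell+2})q^\ell e^{\cdots}$ term; since $t_{\max}\to\infty$ as $\phi\to 0$, no fixed $\alpha$ alone dominates these over the whole interval, and one must use the $(\beta-\ell)(r-1)t^{r-2}(1+t^{r-\ell+2})q^\ell e^{\cdots}$ summand of $f_\ell'$ --- i.e.\ $\beta$ must be chosen large enough to supply the growing term, not merely large enough for positivity. Your "obstacle" paragraph does gesture at this interaction, and the overall hierarchy ($\beta$, then $\alpha$, then $\zeta$, with $\zeta$ decoupled from the $\phi$-power bookkeeping because $t_{\max}^{r-1}=\zeta^{r-1}\log(1/\phi)$ while $\alpha t_{\max}=\alpha\zeta\log^{1/(r-1)}(1/\phi)$ is subpolynomial in $1/\phi$ for $r\ge3$) is sound, but if you present (1)--(6) from scratch rather than by citation you should explicitly state that $\beta$ is sized to dominate all $t^{r-2}$-scale terms, with $\alpha$ reserved for the constant-in-$t$ comparisons in (1)--(3).
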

\begin{proof}
First we note that we can choose $\alpha, \beta$ to satisfy (1)--(6). Indeed, these bounds are precisely the same as the variation equations in the first author and Bohman \cite{BennettBohmanNoteOnRandom} (see equations (13)--(18) in that paper). Finally we will choose $\zeta>0$ to be sufficiently small. Since we choose $\zeta$ last, it can depend on $r, \alpha, \beta, \delta, \lambda$. Now we justify (7)--(12). Recall from \eqref{eq:q} that $q \coloneqq e^{-t^{r-1}}$, thus
    \begin{enumerate}
    \setcounter{enumi}{6}
        \item  
        \[\frac{\phi^\delta f_V}{q} = \phi^{\delta}(1+t^2)\exp(\alpha t + (\beta-1)t^{r-1}) = O\rbrac{\phi^\delta \log(1/\phi) \phi^{-(\alpha + \beta -1)\zeta^{r-1}}} =o(1),\]
        where the final bound follows since we choose some small $\zeta>0$ after having chosen the other constants. For the rest of the proof we will use similar reasoning without comment. 
        \item This follows from (7) and the fact that $q(t) \ge q(t_{\max}) = \phi^{\zeta^{r-1}}.$
        \item $q'$ and $q''$ both have the following form: a polynomial in $t$ times $q$. For any polynomial $P(t)$, the continuous and differentiable function $P(t)q(t) \rightarrow 0$ as $t \rightarrow \infty$, which suffices to conclude that $P(t)q(t)=O(1)$ for $t\in [0, \infty)$.
        \item $s_\ell(t)$ can be written as $D^\frac{\ell-1}{r-1}P(t)[q(t)]^{\ell-1}$ for a polynomial $P$, and so by the same reasoning as the previous bound we get $s_{\ell}(t) = O\rbrac{D^{\frac{\ell-1}{r-1}}}$. Likewise by \eqref{eq:sellpmdef} we have 
        \[(s_\ell^+)' = \frac{D^{-\frac{1}{r-1}} \ell s_{\ell+1}}{q} = D^\frac{\ell-1}{r-1}P(t)[q(t)]^{\ell-1}\]
        for some polynomial $P(t)$ and so we get the bound on $(s_\ell^+)'$. Similarly $(s_\ell^-)', (s_\ell^+)'', (s_\ell^-)''$ all have the same form and so their bounds follow as well. 
        \item $f_\ell, f_\ell', f_\ell''$ all have the following form: $P(t) \exp(\alpha t + (\beta-\ell)t^{r-1})$, where $P(t)$ is some polynomial in $t$. Thus, $P(t)$ will be $polylog(\phi)$ for all $t \le t_{max}$, and we have 
        \[
        P(t) \exp(\alpha t + (\beta-\ell)t^{r-1}) = O\rbrac{polylog(\phi) \phi^{ -(\alpha + \beta)\zeta^{r-1})}} = O(\phi^{-\lambda})
        \]
        \item This follows similarly to the last bound. 
    \end{enumerate}
\end{proof}

\section{Deferred proofs of crude bounds}\label{sec:deferred}

First, we show that Condition \eqref{eq:DynamicSetDegBound} holds until step $i_{\max}$ a.a.s.~for $a=1$. Note that the case $b=r$ follows from Condition \ref{cond:almostregular} and the fact that we never create new edges of size $r$. So we assume $b \le r-1$.

\begin{lemma}\label{lem:vertexdegreeBound}  Let $2 \leq b \le r-1$, and recall that $D_{1 \uparrow b}  \coloneqq  D^{\frac{b-1}{r-1}} \phi^{-(2r-2b) \lambda}$. Then, 
\be\label{eq:vertexupperdegreeprobabilitybound}
\mathbb{P}\left(\exists i \leq \tau \text { and } v \in V(i) \text { such that } d_{b}(v, i) \geq D_{1 \uparrow b}\right) =o(1) \eqpd
\ee
\end{lemma}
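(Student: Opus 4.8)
The plan is to mimic the proof of Lemma~\ref{lem:degreeBound} in the case $a=1$: for each vertex $v$ I would track a single supermartingale $Y_b(v,i)$, apply Freedman's inequality (Lemma~\ref{lem:FreedmanLemma}), and then union bound over $v$. Concretely, set
\[
Y_b(v,i)\coloneqq
\begin{cases}
d_b(v,i)-\dfrac{r\,D^{\frac{b}{r-1}}\phi^{-(2r-2b-1)\lambda}}{N}\cdot i & \text{if }\mathcal{E}_{i-1}\text{ holds},\\[1.3ex]
Y_b(v,i-1) & \text{otherwise.}
\end{cases}
\]
To check that this is a supermartingale I would estimate $\mathbb{E}[\Delta d_b(v,i)]$ exactly as in Lemma~\ref{lem:degreeBound}: the only positive contribution comes from a size-$(b+1)$ edge through $v$ being shrunk to size $b$ when one of its $b$ non-$v$ vertices is chosen (Step~\eqref{step2}), so $\mathbb{E}[\Delta d_b(v,i)]\le \frac{b\,d_{b+1}(v,i)}{|V(i)|}$. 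Bounding $d_{b+1}(v,i)\le D_{1\uparrow b+1}=D^{\frac{b}{r-1}}\phi^{-(2r-2b-2)\lambda}$ via~\eqref{eq:DynamicSetDegBound} (or via Condition~\eqref{cond:almostregular} when $b+1=r$) and $|V(i)|\ge N\phi^{\lambda}$ via~\eqref{eq:Vlowerbound}, the subtracted drift dominates, so $Y_b(v,i)$ is a supermartingale; negative contributions to $d_b(v,i)$ only push it further down and are irrelevant here.

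For the concentration step I would take $C\coloneqq D_{2\uparrow b+1}=D^{\frac{b-1}{r-1}}\phi^{1-(2r-2b-2)\lambda}$ as the bound on the one-step increase of $Y_b(v,i)$ (in a single step at most $d_{\{v,v_{i+1}\}\uparrow b+1}(i)\le D_{2\uparrow b+1}$ size-$(b+1)$ edges through $v$ can shrink), and estimate $\operatorname{Var}[\Delta Y_b(v,i)]\le C\cdot\mathbb{E}[|\Delta d_b(v,i)|]$. For the latter one also needs the negative part: a size-$b$ edge $e$ through $v$ disappears in a step either because $v_{i+1}\in e$ or because choosing $v_{i+1}$ creates a smaller edge $A\cup\{v_{i+1}\}$ with $A\subsetneq e$, $|A|\ge 2$ (Steps~\eqref{step1}--\eqref{step3}); summing $d_{A\uparrow|A|+1}(i)\le D_{|A|\uparrow|A|+1}$ over $A\subseteq e$ via~\eqref{eq:DynamicSetDegBound}, and using~\eqref{eq:DynamicCoDegBound} to control the $O(1)$-fold overcounting of killing vertices, shows $\mathbb{E}[\Delta d_b(v,i)^-]$ is of smaller order than the positive drift (using $D\ge\phi^{-r}$ and $\lambda=\tfrac{1}{100r}$). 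Hence $\operatorname{Var}[\Delta Y_b(v,i)]=O\!\big(N^{-1}D^{\frac{2b-1}{r-1}}\phi^{1-(4r-4b-3)\lambda}\big)$, so one may take $W(i_{\max})\le w\coloneqq O\!\big(D^{\frac{2b-2}{r-1}}\phi^{1-(4r-4b-3)\lambda}\log^{\frac{1}{r-1}}(1/\phi)\big)$. With $Y_b(v,0)=d_b(v,0)\le \phi D^{\frac{b-1}{r-1}}$ by Condition~\eqref{cond:max1degree} and $d\coloneqq\tfrac{1}{2}D^{\frac{b-1}{r-1}}\phi^{-(2r-2b)\lambda}$, Lemma~\ref{lem:FreedmanLemma} gives $\exp\!\big(-d^2/(2(w+Cd))\big)=\exp\!\big(-\Omega(\phi^{-1-2\lambda})\big)=o(N^{-r})$, comfortably beating the union bound over the $O(N)$ choices of $(v,b)$. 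When the bad event does not occur, $d_b(v,i)<d+\phi D^{\frac{b-1}{r-1}}+r\zeta D^{\frac{b-1}{r-1}}\phi^{-(2r-2b-1)\lambda}\log^{\frac{1}{r-1}}(1/\phi)<D_{1\uparrow b}$ for all $i\le i_{\max}$, since $\phi^{\lambda}\log^{\frac{1}{r-1}}(1/\phi)=o(1)$ and $\zeta$ is small.

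The one point requiring real care — the only genuine departure from the $a\ge 2$ case of Lemma~\ref{lem:degreeBound} — is the estimate of the negative contributions to $\Delta d_b(v,i)$ entering the variance bound: unlike for sets of size $\ge 2$, a size-$b$ edge through $v$ can be destroyed ``from the outside'' via the redundancy convention (Step~\eqref{step3}) when a newly shrunk edge lands strictly inside it, and one must verify that this loss — governed by the set-degree bounds $D_{|A|\uparrow|A|+1}$ and the codegree bounds $C_{a,a'\to k}$ — stays safely below the positive drift we subtract off. Given the slack in $\lambda=\tfrac{1}{100r}$ and the hypothesis $D\ge\phi^{-r}$, this is a routine (if slightly tedious) check of $\phi$-exponents; everything else is a direct transcription of the template in Lemma~\ref{lem:degreeBound}.
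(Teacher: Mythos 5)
Your proposal follows the paper's proof essentially step-for-step: the same supermartingale $Y_b(v,i)$, the same drift term, the same $C=D_{2\uparrow b+1}$, the same shift $d$, and the same $O(N)$ union bound. The one place you try to be more careful than the paper---explicitly estimating the negative contribution $\mathbb{E}[\Delta d_b(v,i)^-]$ feeding into the variance---contains a slip. You enumerate how a size-$b$ edge $e\ni v$ can be destroyed as ``$v_{i+1}\in e$'' or ``a new edge $A\cup\{v_{i+1}\}$ appears with $A\subsetneq e$, $|A|\ge 2$'', but you drop the case $|A|=1$, which is in fact the dominant one: if $\{w,v_{i+1}\}\in\mathscr{H}(i)$ for some $w\in e\setminus\{v\}$, then $w$ is closed at this step and $e$ (indeed every size-$b$ edge through $\{v,w\}$) vanishes. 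This is exactly the $\sum_{x\in e\setminus\{v\}}d_2(x)$ term the paper accounts for in its $Z_\ell^-$ analysis. Including it gives
\[
\mathbb{E}\big[\Delta d_b(v,i)^-\big]=O\!\left(\frac{d_b(v)\,D_{1\uparrow 2}}{|V(i)|}\right)=O\!\left(\frac{D^{\frac{b}{r-1}}\phi^{-(4r-2b-3)\lambda}}{N}\right),
\]
which is a factor $\phi^{-(2r-2)\lambda}$ \emph{larger}, not smaller, than the positive drift $O\big(N^{-1}D^{\frac{b}{r-1}}\phi^{-(2r-2b-1)\lambda}\big)$. So the line ``$\mathbb{E}[\Delta d_b(v,i)^-]$ is of smaller order than the positive drift'' is false as stated.

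To be fair, the paper elides this point entirely: it bounds $\mathbb{E}[|\Delta d_b(v,i)|]$ by exactly the expression it obtained for $\mathbb{E}[\Delta d_b(v,i)]$ without comment, and it does not check that $C=D_{2\uparrow b+1}$ dominates the negative one-step changes (it does not: $\Delta d_b^-$ can be as large as roughly $C_{b,2\to 1}$). Both shortcuts are harmless because the Freedman estimate has a large reserve of $\lambda$-slack: even after inflating the variance by the resulting extra $\phi^{-O(\lambda)}$ factor, $w$ remains dominated by $Cd=\Theta\big(D^{\frac{2b-2}{r-1}}\phi^{1-(4r-4b-2)\lambda}\big)$, so $d^2/(w+Cd)=\Omega(\phi^{-1-2\lambda})$ either way and the tail bound comfortably beats the $O(N)$ union bound. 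The easy fix to your write-up is to include the $d_2$-closure term in the negative contribution and carry a slightly larger $w$ through, or simply to bound $\mathbb{E}[|\Delta d_b|]$ as the paper does without making a comparative claim about the negative part; the comparative claim you stated is not a correct intermediate step.
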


\begin{proof}

 We define variables $Y_{b}(v, i)$ as follows.

$$
Y_{b}(v, i)\coloneqq  \begin{cases}d_{b}(v, i)-    \frac{r}{N}  D^{\frac{b}{r-1}} \phi^{-(2 r-2 b -1) \lambda }  \cdot i & \text { if } \mathcal{E}_{i-1} \text { holds } \\ Y_{b}(v, i-1) & \text { otherwise }\end{cases}
$$

To show that $Y_{b}(v, i)$ is a supermartingale, it suffices to observe  
\[
\mathbb{E}\left[\Delta d_{b}(v, i) \right] \le \frac{b d_{b+1}(v, i)}{|V(i)|}\le \frac{r}{N}  D^{\frac{b}{r-1}} \phi^{-(2 r-2 b -1) \lambda }.
\]

 For our application of Freedman's theorem \eqref{lem:FreedmanLemma}, we can set $C  =D_{2 \uparrow b+1} =D^{\frac{b-1}{r-1}}\phi^{1-(2 r-2 b-2) \lambda}$. Now

$$
\begin{aligned}
\mathbb{V} a r\left[\Delta Y_{b}(v, i)\right]=\mathbb{V} a r\left[\Delta d_{b}(v, i)\right] & \leq \mathbb{E}\left[(\Delta d_{b}(v, i))^2\right] \\
& \leq C\cdot \mathbb{E}\left[\left|\Delta d_{b}(v, i)\right| \right] \\
& \leq \frac{r}{N} D^{\frac{2 b-1}{r-1}} \phi^{1-(4 r-4 b-3) \lambda}
\end{aligned}
$$

Using \eqref{eq:i_max} we have $W({i_{\max}}) \leq i_{\max} \cdot \frac{r}{N} D^{\frac{2 b-1}{r-1}} \phi^{1-(4 r-4 b-3) \lambda} \le D^{\frac{2 b -2}{r-1}} \phi^{2-(4 r-4 b-3) \lambda} \log(1/\phi)$, so we can take $w$ to be the latter expression. Setting $d= \frac12 D^{\frac{b-1}{r-1}}\phi^{-(2r-2b)\lambda} $ and noting that $Y_{b}(v, 0)\le \Delta_1(\mathscr{H}^{(b)}) \le  \phi D^{\frac{b-1}{r-1}}$ by Condition \eqref{cond:max1degree},  Freedman's theorem gives us that 

\begin{align}
\begin{split}
P\left[\exists i \le i_{\max} : Y_{A \uparrow b}(i) - \phi D^{\frac{b-1}{r-1}} \geq d \right]& \le \exp \left(- \frac{d^2}{2(w+C d)}\right)\\
&\le \exp \left(-\Omega \left(\frac{D^{\frac{2(b-1)}{r-1}}\phi^{-(4r-4b)\lambda}}{D^{\frac{2 b -2}{r-1}} \phi^{2-(4 r-4 b-3) \lambda} \log(1/\phi)+D^{\frac{2b-2}{r-1}}\phi^{1-(4r-4b-2)\lambda}}\right)\right) \\
& < \exp \rbrac{-\phi^{-\lambda/2}} = o(N^{-1}).
\end{split}
\end{align}
If the unlikely event above does not happen, i.e. if $Y_{b}(v,i)-\phi D^{\frac{b-1}{r-1}} <d$ for all $i \le i_{\max}$, then 
\[
d_{b}(v, i) \le \phi D^{\frac{b-1}{r-1}}+ d + \frac{r}{N}  D^{\frac{b}{r-1}} \phi^{-(2 r-2 b -1) \lambda } \cdot i_{\max} < D_{1 \uparrow b}.
\]
 Now \eqref{eq:vertexupperdegreeprobabilitybound} follows from the union bound over at most $N$ choices for $v$.

\end{proof}

We next show that condition \eqref{eq:DynamicCoDegBound} also holds to step $i_{\max}$ a.a.s. 
\begin{lemma} \label{lem:coDegreeBound}
Let $2 \leq a, a^{\prime} \leq r$ and $1 \leq k<a, a^{\prime}$ be fixed. Then, recalling that $C_{a, a^{\prime} \rightarrow k}  \coloneqq 2^r D^{\frac{a+a^{\prime}-k-2}{r-1}}\phi^{1-(4 r-2 k-2) \lambda}$, we have that 
\be \label{eq:codegprobabilitybound}
\mathbb{P}\left(\exists i \leq \tau \right. \mbox{ and a non-bad pair } v, v^{\prime} \mbox{ such that } \left.c_{a, a^{\prime} \rightarrow k}\left(v, v^{\prime}, i\right) \geq C_{a, a^{\prime} \rightarrow k}\right) =o(1)\eqpd
\ee
 \end{lemma}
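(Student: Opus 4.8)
The plan is to mirror the proofs of Lemmas \ref{lem:degreeBound} and \ref{lem:vertexdegreeBound}: build a linearly drifting supermartingale, control it with Freedman's inequality (Lemma \ref{lem:FreedmanLemma}), and finish with a union bound over pairs. Fix a non-bad pair $v,v'$ and a valid triple $a,a',k$, and set
\[
Y_{a,a'\to k}(v,v',i)\coloneqq\begin{cases} c_{a,a'\to k}(v,v',i)-\dfrac{L}{N}\,i & \text{if }\mathcal{E}_{i-1}\text{ holds},\\[4pt] Y_{a,a'\to k}(v,v',i-1) & \text{otherwise},\end{cases}
\]
where $L=L(a,a',k)$ is an explicit upper bound on $N\,\mathbb{E}[\Delta c_{a,a'\to k}(v,v',i)]$ chosen so that $Y$ is a supermartingale.

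The heart of the argument is estimating $\mathbb{E}[\Delta c_{a,a'\to k}(v,v',i)]$. Because $\mathscr{H}(i)$ only loses edges, a pair $(e,e')$ can be \emph{added} to the count only when the selected vertex $u=v_{i+1}$ shrinks a one-larger configuration: $u$ lies in a size-$(a{+}1)$ edge through $v$, or symmetrically in a size-$(a'{+}1)$ edge through $v'$, or in the common part of a size-$(a{+}1)$ edge through $v$ and a size-$(a'{+}1)$ edge through $v'$ meeting in $k{+}1$ points; the choices of $u$ that instead destroy a configuration (via the redundant-edge-removal convention \eqref{step3}) only decrease $\Delta c$ and may be dropped. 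Summing the number of newly created pairs over $u\in V(i)$, dividing by $|V(i)|\ge N\phi^{\lambda}$ from \eqref{eq:Vlowerbound}, and using $\sum_u d_{\{u,v\}\uparrow a+1}(i)=a\,d_{a+1}(v,i)$ together with a choice of a $k$-subset of the larger edge, each of the three contributions is bounded by a product of a single-vertex degree ($d_{a+1}(v,i)$ or $d_{a'+1}(v',i)$) and a maximum $(k{+}1)$-set degree into an $a'$- or $a$-edge. The key point is to bound the vertex degrees by $D_{1\uparrow a+1}$ from Lemma \ref{lem:vertexdegreeBound} (or by $(1+\phi)D$ when the size equals $r$, via Condition \eqref{cond:almostregular}) and the $(k{+}1)$-set degrees by $D_{k+1\uparrow a'}$ from \eqref{eq:DynamicSetDegBound} (or by $\Delta_{k+1}(\mathscr{H}^{(r)})\le\phi D^{\frac{r-k-1}{r-1}}$ from the spreadness Condition \eqref{cond:maxLdegree} when the size equals $r$) --- \emph{not} by the codegree bound $C_{a+1,a'\to k}$. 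This gives $L=O\big(D^{\frac{a+a'-k-1}{r-1}}\phi^{1-(4r-2a-2a'-1)\lambda}\big)$, and since $a,a'\ge k+1$ the $\phi$-exponent here exceeds $1-(4r-2k-2)\lambda$ by $\Omega(\lambda)$; consequently $\tfrac{L}{N}\,i_{\max}=O\big(\zeta\log^{\frac1{r-1}}(1/\phi)\,D^{\frac{a+a'-k-2}{r-1}}\phi^{1-(4r-2a-2a'-1)\lambda}\big)$, and the extra power of $\phi$ swamps the $\log^{1/(r-1)}(1/\phi)$ factor so that the accumulated drift is below $\tfrac14 C_{a,a'\to k}$.

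For Freedman's inequality I would take $C$ to be a bound on the one-step change $|\Delta Y|$, namely the same product of degrees but with $d_{\{u,v\}\uparrow a+1}(i)\le D_{2\uparrow a+1}$ from \eqref{eq:DynamicSetDegBound} in place of $d_{a+1}(v,i)$; this is $O\big(D^{\frac{a+a'-k-2}{r-1}}\phi^{2-(4r-2a-2a'-2)\lambda}\big)=o(C_{a,a'\to k})$. Then $\Var[\Delta Y]\le\mathbb{E}[(\Delta c)^2]\le C\cdot\mathbb{E}[|\Delta c|]\le CL/N$, so $W(i_{\max})\le i_{\max}CL/N=:w$. The initial value $c_{a,a'\to k}(v,v',0)$ is $0$ unless $a=a'=r$ (no short edges initially); for $a=a'=r$ and $k<r-1$ it is at most $\Delta_1(\mathscr{H}^{(r)})\binom rk\Delta_{k+1}(\mathscr{H}^{(r)})=O(\phi D^{\frac{2r-k-2}{r-1}})$ via Condition \eqref{cond:maxLdegree}, and for $a=a'=r,\ k=r-1$ it is exactly the $(r-1)$-codegree of $v,v'$, hence less than $\phi D$ because the pair is \emph{non-bad} (Definition \ref{def:BadVertices}); in every case it is below $\tfrac14 C_{a,a'\to k}$ for $N$ large. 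Taking $d=\tfrac14 C_{a,a'\to k}$, Freedman gives $\mathbb{P}[\exists\, i\le i_{\max}:\,Y_{a,a'\to k}(v,v',i)-c_{a,a'\to k}(v,v',0)\ge d]\le\exp(-d^2/(2(w+Cd)))=\exp(-\phi^{-\Omega(\lambda)})=o(N^{-2})$, using $\lambda=1/(100r)$ and $\phi<\log^{-300r}N$; on the complement $c_{a,a'\to k}(v,v',i)\le c_{a,a'\to k}(v,v',0)+d+\tfrac LN i_{\max}<C_{a,a'\to k}$ by the three estimates above. A union bound over the $\binom N2$ pairs $(v,v')$ and the $O(r^3)$ triples $(a,a',k)$ completes the proof.

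The main obstacle is the drift bookkeeping of the second paragraph: $\mathbb{E}[\Delta c_{a,a'\to k}]$ must be estimated through the vertex-degree and spreadness bounds rather than the one-larger codegree bound $C_{a+1,a'\to k}$, since only then does the $\phi$-power of the accumulated drift fall strictly below that of $C_{a,a'\to k}$ (by $\Omega(\lambda)$), which is exactly what lets the argument close. A secondary care point is correctly classifying which vertex choices create versus destroy a configuration under the edge-redundancy convention of step \eqref{step3}.
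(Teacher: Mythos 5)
Your overall scaffold — freeze on failure of $\mathcal{E}_{i-1}$, build a linearly drifting supermartingale, apply Freedman, union bound over pairs — matches the paper, and your accounting of which vertex choices \emph{create} a pair (one-sided shrink through $v$, one-sided shrink through $v'$, double shrink via the intersection) is the right decomposition. You also miss a simplification the paper exploits: whenever $\max(a,a')>k+1$, the bound $c_{a,a'\to k}(v,v',i)\le D_{1\uparrow a}\binom{a-1}{k}D_{k+1\uparrow a'}\le C_{a,a'\to k}$ already follows pointwise from conditions \eqref{eq:DynamicSetDegBound} at step $i$, so no martingale is needed there; the process argument is only required for $a=a'=k+1$, and the subcase $k=r-1$ is then immediate from the non-bad-pair hypothesis.

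The genuine gap is in your drift estimate for the hard case $a=a'=k+1$, $k\le r-2$. You insist that each contribution be bounded by ``a single-vertex degree times a maximum $(k+1)$-set degree — not by the codegree bound.'' This fails in two ways. First, for a one-sided shrink with $a'=k+1$, the quantity $D_{k+1\uparrow a'}=D_{k+1\uparrow k+1}$ is not defined (and the actual $d_{A\uparrow k+1}(i)$ for $|A|=k+1$ is $\le 1$, not $O(\phi^{1-\cdots\lambda})$); you must instead take $e'$ first via $D_{1\uparrow a'}$ and then the $(k+2)$-edge via $D_{k+1\uparrow a+1}$, i.e.~$c_{a+1,a'\to k}\le D_{1\uparrow a'}\binom{a'-1}{k}D_{k+1\uparrow a+1}$, which \emph{is} exactly the one-larger codegree bound in disguise. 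Second and more seriously, the double-shrink contribution is governed by $c_{k+2,k+2\to k+1}(v,v',i)$, which for $k<r-2$ is again a ``hard'' codegree with both sides equal to intersection plus one: \emph{no} factorization of it into $D_{1\uparrow\cdot}\binom{\cdot}{\cdot}D_{k+2\uparrow\cdot}$ is available (both would require a quantity of the form $D_{k+2\uparrow k+2}$). The only usable bound here is the good-event condition \eqref{eq:DynamicCoDegBound} itself, $c_{k+2,k+2\to k+1}(v,v',i)\le C_{k+2,k+2\to k+1}$, which is precisely the codegree bound you rule out. If you instead use $d_{k+2}(v,i)\cdot 1\le D_{1\uparrow k+2}=D^{\frac{k+1}{r-1}}\phi^{-(2r-2k-4)\lambda}$, the accumulated drift is $\Theta\bigl(\zeta D^{\frac{k}{r-1}}\phi^{-(2r-2k-3)\lambda}\log^{1/(r-1)}(1/\phi)\bigr)$, which exceeds $C_{k+1,k+1\to k}=2^r D^{\frac{k}{r-1}}\phi^{1-(4r-2k-2)\lambda}$ by a factor of order $\phi^{-1+(2r+1)\lambda}\gg 1$; the argument does not close. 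So the roles are reversed from what you wrote: the degree/set-degree bounds suffice outside the hard case, but inside the hard case the drift must be fed by the codegree condition at the next level up.
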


\begin{proof}

We first note that the definition of the good event implies Lemma \ref{lem:coDegreeBound} except in the case $a=a^{\prime}=k+1$. To see this, suppose $a^{\prime}>k+1$ and let $v, v^{\prime}$ be any two vertices. Then we have
$$
c_{a, a^{\prime} \rightarrow k}\left(v, v^{\prime}\right) \leq D_{1 \uparrow a} \cdot\binom{a-1}{k} \cdot D_{k+1 \uparrow a^{\prime}} \leq  2^r \cdot D^{\frac{a+a^{\prime}-k-2}{r-1}}\phi^{1-\left(4r-2a - 2a^{\prime}\right) \lambda},
$$
which gives the desired bound.
So we restrict our attention to the case $a=a^{\prime}=k+1$.  The  case of $k = r-1$ holds by the assumption since $v, v'$ is not a bad pair, and thus their $(r-1)$-codegree is less than $\phi D$. Now we assume $k \le r-2$.  

 We define variables $Y_{a, a^{\prime} \rightarrow k}\left(v, v^{\prime}, i\right)$ as follows.

$$
Y_{a, a^{\prime} \rightarrow k}\left(v, v^{\prime}, i\right)\coloneqq  \begin{cases}c_{a, a^{\prime} \rightarrow k}\left(v, v^{\prime}, i\right)-\frac{3 \cdot 2^r}{N} D^{\frac{k+1}{r-1}}\phi^{1-(2 r-2 k-1) \lambda}\cdot i & \text { if } \mathcal{E}_{i-1} \text { holds } \\ Y_{a, a^{\prime} \rightarrow k}(v, v^{\prime},i-1) & \text { otherwise }\end{cases}
$$

We justify that $Y_{a, a^{\prime} \rightarrow k}\left(v, v^{\prime}, i\right)$ is a supermartingale. Note that $c_{k+1, k+1 \rightarrow k}\left(v, v^{\prime}, i\right)$ can increase in size only when the algorithm chooses a vertex contained in the intersection of a pair of edges from $c_{k+2, k+2 \rightarrow k+1}\left(v, v^{\prime}, i\right)$, or when the algorithm chooses the vertex not contained in the intersection of a pair of edges counted by $c_{k+2, k+1 \rightarrow k}\left(v, v^{\prime}, i\right)$ or $c_{k+1, k+2 \rightarrow k}\left(v, v^{\prime}, i\right)$. Thus 

\[
\begin{split}
\mathbb{E}\left[\Delta c_{a, a^{\prime} \rightarrow k}\left(v, v^{\prime}, i\right) \right] \le \frac{C_{k+2, k+2 \rightarrow k+1}+C_{k+2, k+1 \rightarrow k}+C_{k+1, k+2 \rightarrow k}}{|V(i)|}&\leq \frac{2D^{\frac{k}{r-1}}\phi^{1-(2r-2k-4)\lambda}+D^{\frac{k-1}{r-1}}\phi^{1-(2r-2k-2)\lambda}}{N\phi^{\lambda}}\\
&\leq \frac{D^{\frac{k}{r-1}}\phi^{1-(2r-2k-2)\lambda}\left(2\phi^{2\lambda}+1\right)}{N\phi^{\lambda}}\\
&\le \frac{3 \cdot 2^r}{N} D^{\frac{k+1}{r-1}}\phi^{1-(2 r-2 k-1) \lambda}\eqpd
\end{split}
\]
Where, to get from the second to the third line we use that $\phi^{2\lambda}=o(1)$, and so $\left(2\phi^{2\lambda}+1\right) \leq 3$.
 For our application of Freedman's theorem \eqref{lem:FreedmanLemma}, we can set $C  =2 D_{2 \uparrow k+2}+D_{2 \uparrow k+1} \leq 3 D^{\frac{k}{r-1}} \phi^{1-( 2r- 2k-4) \lambda}$. Now

$$
\begin{aligned}
\mathbb{V} a r\left[\Delta Y_{a, a^{\prime} \rightarrow k}\left(v, v^{\prime}, i\right)\right]=\mathbb{V} a r\left[\Delta c_{a, a^{\prime} \rightarrow k}\left(v, v^{\prime}, i\right)\right] & \leq \mathbb{E}\left[(\Delta c_{a, a^{\prime} \rightarrow k}\left(v, v^{\prime}, i\right))^2\right] \\
& \leq C\cdot \mathbb{E}\left[\left|\Delta c_{a, a^{\prime} \rightarrow k}\left(v, v^{\prime}, i\right)\right| \right] \\
& \leq \frac{9 \cdot 2^r}{N}  D^{\frac{2 k+1}{r-1}}\phi^{2 -(4 r-4 k-5) \lambda}
\end{aligned}
$$

Thus we have $W({i_{\max}}) \leq i_{\max} \cdot \frac{9 \cdot 2^r}{N}  D^{\frac{2 k+1}{r-1}}\phi^{2 -(4 r-4 k-5) \lambda} =O\left( D^{\frac{2 k}{r-1}}\phi^{2-(4 r-4 k-5) \lambda} \log (1/\phi)\right)$ so we can take $w$ to be the latter expression. We will set $d=D^{\frac{k}{r-1}}\phi^{1-(4 r-2 k-2) \lambda} $. Observe that $Y_{a, a^{\prime} \rightarrow k}(v, v^{\prime}, 0) \le \phi D^{\frac{k}{r-1}}$ by Condition \eqref{cond:max1degree}. Freedman's theorem gives us that 

\begin{align}
\begin{split}
P\left[\exists i \le i_{\max} :  Y_{a, a^{\prime} \rightarrow k}\left(v, v^{\prime}, i\right) -\phi D^{\frac{k-1}{r-1}} \geq d  \right] &\le \exp \left(-\frac{d^2}{2(w+C d)}\right)\\
&=\exp \left(-\Omega \left(\frac{D^{\frac{2k}{r-1}}\phi^{2-(8r-4k-4)\lambda}}{D^{\frac{2 k}{r-1}}\phi^{2-(4 r-4 k-5) \lambda} \log (1/\phi) + D^{\frac{2k}{r-1}}\phi^{2-(6 r-4 k-6) \lambda} }\right)\right) \\
& \le \exp \left(- \phi^{-(2r+2)\lambda} \right) = o(N^{-2}).
\end{split}
\end{align}
If the unlikely event above does not happen, i.e. if $Y_{a, a^{\prime} \rightarrow k}\left(v, v^{\prime}, i\right)-\phi D^{\frac{k}{r-1}} <d$ for all $i \le i_{\max}$, then 
\[
c_{a, a^{\prime} \rightarrow k}\left(v, v^{\prime}, i\right) \le \phi D^{\frac{k}{r-1}}+d + \frac{3 \cdot 2^r}{N} D^{\frac{k+1}{r-1}}\phi^{1-(2 r-2 k-1) \lambda} \cdot i_{\max} < C_{k+1, k+1 \rightarrow k}.
\]
 Now \eqref{eq:codegprobabilitybound} follows from the union bound over at most $N^2$ choices for $v, v'$. 
 
\end{proof}

We now bound the probability that $\mathcal{E}_{i_{\text {max }}}$ fails due to condition \eqref{eq:Dynamicddubprime}.

\begin{lemma} Recall that $D''_{\ell}  \coloneqq  D^{\frac{\ell-1}{r-1}}\phi^{1-(2r-2\ell) \lambda}$. Then,\label{lem:degreedubprimeBound}
\be\label{eq:dubprimeprobabilitybound}
\mathbb{P}\left(\exists i \leq \tau \text { and } v \in V(i) \text { such that } d''_{\ell}(v,i) \geq D''_\ell \right) =o(1) \eqpd
\ee
\end{lemma}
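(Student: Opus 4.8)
The plan is to run the standard supermartingale-plus-Freedman argument, exactly as for the other crude bounds (Lemmas \ref{lem:degreeBound}, \ref{lem:vertexdegreeBound}, \ref{lem:coDegreeBound}), after two preliminary reductions. Since an edge counted by $d''_\ell(v,i)$ contains $v$ together with a bad pair disjoint from $v$, the case $\ell=2$ is vacuous. For $\ell=r$ the process never creates new edges of size $r$, so $d''_r(v,i)$ is non-increasing and $d''_r(v,i)\le d''_r(v,0)=d''_{v\uparrow r}(\mathscr{H})\le \phi D=D''_r$ holds deterministically by Condition \eqref{cond:ddubprime}. So fix $3\le\ell\le r-1$.

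The structural point that makes this easy is that being a bad pair is an intrinsic property of $\mathscr{H}$ (Definition \ref{def:BadVertices}), so $d''_\ell(v,i)$ can only increase when an edge already counted by $d''_{\ell+1}(v,i)$ has one of its other vertices chosen into $I$ and shrinks to size $\ell$. Hence, whenever $\mathcal{E}_i$ holds,
\[
\mathbb{E}[\Delta d''_\ell(v,i)]\ \le\ \frac{\ell\, d''_{\ell+1}(v,i)}{|V(i)|}\ \le\ \frac{\ell D''_{\ell+1}}{N\phi^\lambda}\ =\ \frac{\ell D^{\frac{\ell}{r-1}}\phi^{1-(2r-2\ell-1)\lambda}}{N},
\]
using \eqref{eq:Dynamicddubprime} and \eqref{eq:Vlowerbound}. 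So define
\[
Y_\ell(v,i)\ :=\ \begin{cases} d''_\ell(v,i)-\dfrac{rD^{\frac{\ell}{r-1}}\phi^{1-(2r-2\ell-1)\lambda}}{N}\,i & \text{if } \mathcal{E}_{i-1} \text{ holds},\\[2mm] Y_\ell(v,i-1) & \text{otherwise},\end{cases}
\]
which is a supermartingale.

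For Freedman's inequality (Lemma \ref{lem:FreedmanLemma}) take $C:=D_{2\uparrow\ell+1}=D^{\frac{\ell-1}{r-1}}\phi^{1-(2r-2\ell-2)\lambda}$, since in one step $d''_\ell(v)$ increases by at most the number $d_{\{v,v_i\}\uparrow\ell+1}(i)\le D_{2\uparrow\ell+1}$ of size-$(\ell+1)$ edges through $\{v,v_i\}$. For the predictable quadratic variation I would bound the upward and downward jumps of $\Delta Y_\ell$ separately: the upward part contributes at most $C\cdot\mathbb{E}[(\Delta d''_\ell)^+]=O(D^{\frac{2\ell-1}{r-1}}\phi^{2-(4r-4\ell-3)\lambda}/N)$, while a single step can delete at most $d''_\ell(v,i)\le D''_\ell$ of the relevant edges and the expected number deleted is $O$ of the drift rate (each such edge is killed only by $v_i$ lying in it or by the redundancy convention \eqref{step3}, which is $O(D^{\frac1{r-1}}\phi^{1-(2r-2\ell)\lambda})$ choices of $v_i$), so the downward part contributes $O(D''_\ell\cdot D^{\frac{\ell}{r-1}}\phi^{1-(2r-2\ell-1)\lambda}/N)=O(D^{\frac{2\ell-1}{r-1}}\phi^{2-(4r-4\ell-1)\lambda}/N)$, which dominates. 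Thus $W(i_{\max})=O(D^{\frac{2\ell-2}{r-1}}\phi^{2-(4r-4\ell-1)\lambda}\log^{\frac1{r-1}}(1/\phi))$ by \eqref{eq:i_max}; let $w$ be this. With $d:=\tfrac12 D^{\frac{\ell-1}{r-1}}\phi^{1-(2r-2\ell)\lambda}$ and $Y_\ell(v,0)=d''_\ell(v,0)\le\phi D^{\frac{\ell-1}{r-1}}$ (Condition \eqref{cond:ddubprime}), Freedman's inequality gives $\mathbb{P}[\exists i\le i_{\max}:Y_\ell(v,i)-\phi D^{\frac{\ell-1}{r-1}}\ge d]\le\exp(-d^2/2(w+Cd))$; here $d/C=\Omega(\phi^{-2\lambda})$ and $d^2/w=\Omega(\phi^{-\lambda}/\log^{\frac1{r-1}}(1/\phi))$, so this is at most $\exp(-\phi^{-\lambda/2})=o(N^{-1})$ since $\phi\le\log^{-300r}N$ and $\lambda=1/(100r)$. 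A union bound over the $\le N$ choices of $v$ (and $O(1)$ of $\ell$) finishes it: off this event $d''_\ell(v,i)<\phi D^{\frac{\ell-1}{r-1}}+d+\frac{rD^{\ell/(r-1)}\phi^{1-(2r-2\ell-1)\lambda}}{N}i_{\max}<D''_\ell$, the last step because $\phi D^{\frac{\ell-1}{r-1}}=o(D''_\ell)$ (as $\ell\le r-1$), $d=\tfrac12 D''_\ell$, and the drift term equals $\zeta r\,\phi^\lambda\log^{\frac1{r-1}}(1/\phi)\,D''_\ell=o(D''_\ell)$.

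The only real obstacle — everything else is bookkeeping identical in spirit to the earlier crude bounds — is controlling the predictable quadratic variation. Naively one would use $\Var[\Delta Y_\ell]\le C\,\mathbb{E}[|\Delta d''_\ell|]$, but the redundancy convention \eqref{step3} can delete many size-$\ell$ edges through $v$ in one step (via short edges $\{a,b,v_i\}$ of size $3$ producing the new edge $\{a,b\}$), so $|\Delta d''_\ell|$ is not bounded by $C=D_{2\uparrow\ell+1}$. The fix is to split the second moment into its upward and downward parts, bounding the downward jump crudely by the good-event cap $D''_\ell$ and absorbing it into $w$ through the (tiny) expected decrease, so that only the small $C$ enters the $Cd$ term of the Freedman exponent. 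In the concrete applications (e.g.\ the sunflower hypergraph, which is $r$-uniform) there are no short edges at all and this subtlety disappears.
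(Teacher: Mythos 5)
Your proof takes essentially the same route as the paper's: you define the same shifted supermartingale $Y_\ell(v,i)$, you use the same Freedman increment bound $C=D_{2\uparrow\ell+1}$, the same choice of $d$, and the same union bound. The $\ell=2$ (vacuous) and $\ell=r$ (monotone) shortcuts are correct and clean; the paper instead runs the generic argument for all $\ell$ and appeals to Condition \eqref{cond:max1degree} or \eqref{cond:ddubprime} for the initial value $Y_\ell(v,0)\le\phi D^{\frac{\ell-1}{r-1}}$.

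The genuinely interesting part of your proposal is the treatment of the variance. The paper writes $\mathbb{E}\big[(\Delta d''_\ell)^2\big]\le C\cdot\mathbb{E}\big[|\Delta d''_\ell|\big]$, which implicitly needs $|\Delta d''_\ell|\le C$ pointwise. As you observe, the downward jump is not obviously bounded by $C$: when $v_i$ lies in a size-$3$ edge $\{v_i,u_1,u_2\}$, the resulting new pair $\{u_1,u_2\}$ can subsume up to $d_{\{u_1,u_2,v\}\uparrow\ell}\le D_{3\uparrow\ell}$ edges of size $\ell$ through $v$, and there can be $D_{1\uparrow 3}$ such size-$3$ edges through $v_i$; the product $D_{1\uparrow 3}D_{3\uparrow\ell}=\phi^{-(2r-4)\lambda}\cdot C$ exceeds $C$ for $r\ge 3$. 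So the paper's pointwise step is not literally justified, and your fix — split the second moment into $\mathbb{E}\big[((\Delta d''_\ell)^+)^2\big]\le C\,\mathbb{E}[(\Delta d''_\ell)^+]$ and $\mathbb{E}\big[((\Delta d''_\ell)^-)^2\big]\le D''_\ell\,\mathbb{E}[(\Delta d''_\ell)^-]$, then absorb the crude $D''_\ell$ cap through the (tiny) expected decrease — is a valid repair, and your subsequent Freedman arithmetic goes through. (With a tighter bound on $\mathbb{E}[(\Delta d''_\ell)^-]$ one can check that the downward term is in fact dominated by the upward one, so the paper's stated bound on $W$ is correct even though its stated derivation is not.)

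Two quantitative quibbles that do not affect the conclusion but are worth fixing. First, your per-edge removal rate $\sum_{A\subseteq e,\,|A|\ge 2}d_{A\uparrow|A|+1}=O\big(D^{\frac1{r-1}}\phi^{1-(2r-2\ell)\lambda}\big)$ picks out the $|A|=\ell-1$ term, but the sum is dominated by $|A|=2$, giving $O\big(D^{\frac1{r-1}}\phi^{1-(2r-6)\lambda}\big)$; the correct bound is larger, though still $o(1)$ relative to the drift after multiplying by $D''_\ell/|V(i)|$. Second, your closing claim that an $r$-uniform host hypergraph avoids this subtlety is wrong: short edges in $\mathscr{H}(i)$ arise from shrinkage during the process, not from $\mathscr{H}(0)$, so size-$3$ edges are present (and the $D_{1\uparrow 3}D_{3\uparrow\ell}$ cascade can occur) even when $\mathscr{H}=\hnw$.

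Finally, a small note: you use $\mathbb{E}[\Delta d''_\ell]\le\ell d''_{\ell+1}/|V(i)|$, whereas the paper writes $(\ell-2)d''_{\ell+1}/|V(i)|$ as an equality. Neither coefficient is exactly right in general (an edge of $d''_{\ell+1}$ contributes for anywhere between $\ell-2$ and $\ell$ choices of $v_i$, depending on how many bad pairs it contains), but your $\ell$ is a safe upper bound and matches the $\le r/N\cdots$ estimate both of you then use.
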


\begin{proof}

 We define variables $Y_\ell(v, i)$ as follows.

$$
Y_\ell(v, i)\coloneqq  \begin{cases}d''_{\ell}(v, i)-  \frac{r}{N}  D^{\frac{\ell}{r-1}} \phi^{1-(2 r-2 \ell -1) \lambda } \cdot i & \text { if } \mathcal{E}_{i-1} \text { holds } \\ Y_\ell(v,i-1) & \text { otherwise }\end{cases}
$$

To justify that $Y_\ell(v, i)$ is a supermartingale, we observe
\[
\mathbb{E}\left[\Delta d''_\ell(v, i) \right] = \frac{(\ell-2)d''_{\ell+1}}{|V(i)|}\le \frac{r}{N}  D^{\frac{\ell}{r-1}} \phi^{1-(2 r-2 \ell -1) \lambda }.
\]

 For our application of Freedman's theorem \eqref{lem:FreedmanLemma}, we can set $C  =D_{2 \uparrow \ell+1} =D^{\frac{\ell-1}{r-1}}\phi^{1-(2 r-2 \ell-2) \lambda}$. Now

$$
\begin{aligned}
\mathbb{V} a r\left[\Delta Y_\ell(v, i)\right]=\mathbb{V} a r\left[\Delta d''_\ell(v, i)\right] & \leq \mathbb{E}\left[(\Delta d''_\ell(v, i))^2\right] \\
& \leq C\cdot \mathbb{E}\left[\left|\Delta d''_\ell(v, i)\right| \right] \\
& \leq \frac{r}{N} D^{\frac{2 \ell -1}{r-1}} \phi^{2-(4 r-4 \ell-3) \lambda}
\end{aligned}
$$

Thus we have $W({i_{\max}}) \leq i_{\max} \cdot \frac{r}{N} D^{\frac{2\ell -1}{r-1}} \phi^{2-(4 r-4 \ell-3) \lambda} < D^{\frac{2 \ell -2 }{r-1}} \phi^{2-(4 r-4 \ell-3) \lambda} \log(1/\phi)$ so we can take $w$ to be the latter expression. We set $d=\frac 12 D''_\ell = \frac12 D^{\frac{\ell-1}{r-1}}\phi^{1-(2r-2\ell)} $. We claim that $Y_\ell(v, 0) \le \phi D^{\frac{\ell-1}{r-1}}$. Indeed, for $\ell=r$ it follows from Condition \eqref{cond:ddubprime}, and for $\ell<r$ it follows from Condition \eqref{cond:max1degree}. Freedman's theorem gives us that 

\begin{align*}
\begin{split}
&P\left[\exists i \le i_{\max} : Y_\ell(v, i)-\phi D^{\frac{\ell-1}{r-1}} \geq d \right] \le \exp \left(-\frac{d^2}{2(w+C d)}\right)\\
&\qquad \le \exp \left(-\Omega \left( \frac{ D^{\frac{2\ell-2}{r-1}}\phi^{2-(4r-4\ell)}}{D^{\frac{2 \ell -2 }{r-1}} \phi^{2-(4 r-4 \ell-3) \lambda} \log(1/\phi) + D^{\frac{\ell-1}{r-1}}\phi^{1-(2 r-2 \ell-2) \lambda}\cdot D^{\frac{\ell-1}{r-1}}\phi^{1-(2r-2\ell)}} \right)\right) \\
&\qquad < \exp \rbrac{-\phi^{-\lambda}} = o(N^{-1}).
\end{split}
\end{align*}
If the unlikely event above does not happen, i.e. if $Y_\ell(v, i) -\phi D^{\frac{\ell-1}{r-1}}<d$ for all $i \le i_{\max}$, then 
\[
d''_\ell(v, i) \le \phi D^{\frac{\ell-1}{r-1}}+d + \frac{r}{N}  D^{\frac{\ell}{r-1}} \phi^{1-(2 r-2 \ell -1) \lambda }  \cdot i_{\max} < D''_\ell.
\]
 Now \eqref{eq:dubprimeprobabilitybound} follows from the union bound over at most $N$ choices for $v$.

\end{proof} 

\end{document}